\documentclass[11pt,reqno]{amsart}

\usepackage{amsfonts}
\usepackage{amsmath}
\usepackage{mathrsfs}
\usepackage{amssymb, mathtools}
\usepackage{color, amscd, array, graphicx, mathdots, bm, amsfonts, epsfig, tikz, subcaption}
\usepackage{amsthm}
\usepackage{geometry}
\usepackage{caption}
\usepackage{blindtext}
\usepackage[toc, page]{appendix}
\usepackage{hyperref}
\usepackage{pgfplots}
\pgfplotsset{compat=1.18}

\pagenumbering{arabic}

\theoremstyle{plain}
\newtheorem{theorem}{Theorem}[section]
\newtheorem{corollary}[theorem]{Corollary}
\newtheorem{lemma}[theorem]{Lemma}
\newtheorem{proposition}[theorem]{Proposition}

\theoremstyle{remark}
\newtheorem{remark}[theorem]{Remark}



\newcommand{\bE}{\mathbb{E}}

\newcommand{\bP}{\mathbb{P}}

\newcommand{\bR}{\mathbb{R}}
\newcommand{\bS}{\mathbb{S}}

\newcommand{\bZ}{\mathbb{Z}}


\newcommand{\cB}{\mathcal{B}}

\newcommand{\cF}{\mathcal{F}}

\newcommand{\cH}{\mathcal{H}}

\newcommand{\cS}{\mathcal{S}}
\newcommand{\cT}{\mathcal{T}}

\newcommand{\cZ}{\mathcal{Z}}



\newcommand{\kH}{\mathfrak{H}}

\newcommand{\ks}{\mathfrak{s}}

\newcommand{\ky}{\mathfrak{y}}
\newcommand{\kz}{\mathfrak{z}}


\newcommand{\bfH}{\mathbf{H}}

\newcommand{\bfr}{\mathbf{r}}
\newcommand{\bfs}{\mathbf{s}}

\newcommand{\bfy}{\mathbf{y}}

\newcommand{\1}{\mathbf{1}}

\newcommand{\wick}{:\! \exp \!:}

\begin{document}

\title[Feynman-Kac approximation for PAM]{Discrete Feynman-Kac approximation for parabolic Anderson model using random walks}

\author{Panqiu Xia}

\address{School of Mathematics, Cardiff University, Abacws, Senghennydd Road, Cathays, Cardiff, Wales, UK, CF24 4AG}
\email{\href{mailto:xiap@cardiff.ac.uk}{xiap@cardiff.ac.uk}}

\author{Jiayu Zheng}
\thanks{J. Zheng is supported by NSFC grant 11901598 and Guangdong Characteristic
Innovation Project No. 2023KTSCX163. }

\address{Faculty of Computational Mathematics and Cybernetics, Shenzhen MSU-BIT University, Shenzhen, Guangdong, China, 518172}

\email{\href{mailto:jyzheng@smbu.edu.cn}{jyzheng@smbu.edu.cn}}

\date{\today}

\begin{abstract}
In this paper, we introduce a natively positive approximation method based on the Feynman-Kac representation using random walks, to approximate the solution to the one-dimensional parabolic Anderson model of Skorokhod type, with either a flat or a Dirac delta initial condition. Assuming the driving noise is a fractional Brownian sheet with Hurst parameters $H \geq \frac{1}{2}$ and $H_* \geq \frac{1}{2}$ in time and space, respectively, we also provide an error analysis of the proposed method. The error in $L^p (\Omega)$ norm is of order 
\[
O \big(h^{\frac{1}{2}[(2H + H_* - 1) \wedge 1] - \epsilon}\big), 
\]
where $h > 0$ is the step size in time (resp. $\sqrt{h}$ in space), and $\epsilon > 0$ can be chosen arbitrarily small. This error order matches the H\"older continuity of the solution in time with a correction order $\epsilon$, making it `almost' optimal. Furthermore, these results provide a quantitative framework for convergence of the partition function of directed polymers in Gaussian environments to the parabolic Anderson model. 
\end{abstract}

\keywords{Directed polymers in random environments, Feynman-Kac formula, fractional Brownian sheet, parabolic Anderson model, Skorokhod integral}

\subjclass[2020]{60H15, 60H35, 82B44}

\maketitle

\normalsize

\section{Introduction}

In the context of stochastic partial differential equations (SPDEs), the parabolic Anderson model (PAM) serves as a continuum version of the Anderson localization, which is a fundamental model in condensed matter physics. 
In this paper, we focus on the positive approximation for the solution to PAM. 
 Consider the following equation, 
\begin{align}\label{pam} \tag{PAM}
 \frac{\partial}{\partial t} u = \frac{1}{2} \Delta u + u \diamond \dot{W}, 
\end{align}
where $\Delta$ stands for the Laplace operator in space; $\dot{W}$ is a centred Gaussian noise defined on a complete probability space $(\Omega, \cF, \bP)$, that is the (formal) space-time derivative of a fractional Brownian sheet with Hurst parameters $H \geq \frac{1}{2}$ and $H_* \geq \frac{1}{2}$ in time and space, respectively;
and $\diamond$ denotes the Wick product, indicating the corresponding stochastic integral is in the Skorokhod sense. 

The lattice approximation (cf. \cite{pa-98-gyongy, pa-99-gyongy}) is a classic approach to numerical solutions for SPDEs. However, applying the lattice approximation for \eqref{pam} presents two issues. First, the Wick product presenting in \eqref{pam} is difficult to approximate numerically, except in the white-in-time case, i. e., $H = \frac{1}{2}$, as only in this special case does the Wick product coincide with the ordinary product. Second, even in the white-in-time case, the lattice approximation does not guarantee positivity of the numerical solutions, which is a key property when the equation start at a positive initial condition (cf. \cite{ap-19-chen-huang, sto-91-mueller, cmp-17-gubinelli-perkowski}). 
To resolve the first issue, one applicable approach is the Wick-Malliavin approximation (cf. \cite{cit-06-luo}) based on the chaos expansion to the solution. 
However, the Wick-Malliavin approximation still cannot resolve the issue related to the positivity of the numerical solution. 

This paper aims to present a natively positive approximation for \eqref{pam} based on its Feynman-Kac representation. This approach to numerical solutions for (deterministic) PDEs is well-established and is often seen as a specific example of the quantum Monte Carlo method (cf. \cite{cambridge-16-gubernatis-kawashima-werner}). In contrast, the study of the Feynman-Kac representation for solutions to SPDEs was first initiated in \cite{jsp-95-bertini-cancrini}, where the driving noise is $1$-dimensional space-time white noise, and the formula is expressed in a generalised sense through the Wick renormalisation. Subsequently, similar problems for equations driven by fractional Brownian sheets were studied in \cite{ap-11-hu-nualart-song, ap-12-hu-lu-nualart}. 

However, extending the Feynman-Kac approach to numerical solutions to SPDEs is still a relatively unexplored area. A primary challenge lies in the fact that, as detailed in \cite{jsp-95-bertini-cancrini}, under Wick renormalisation, an `$\infty$' term emerges in the exponent of the Feynman-Kac representation, when the equation is driven by $1$-dimensional space-time white noise. This `$\infty$' leads to an infinite variance, which persists in the statistical error when employing Monte Carlo methods, rendering it an unreliable approximation. A parallel obstruction occurs for equation \eqref{pam} driven by a fractional Brownian sheet with Hurst parameters $H \geq \frac{1}{2}$ and $H_* \geq \frac{1}{2}$ provided that $2 H_* + H \leq 2$. By contrast, if $2 H + H_* > 2$, the Wick renormalisation term is finite (cf. \cite[Theorems 3.1 and 7.2]{ap-11-hu-nualart-song}). Under this more restrictive condition, both the Skorokhod and Stratonovich solutions to \eqref{pam} exist, and the Monte Carlo approximation becomes applicable.

Consequently, 
alternative methods beyond the Monte Carlo approach are necessary for approximating the expectation. The core idea in this paper is discretising the Brownian motion as simple random walks, enabling explicit calculation of the resulting expectation. To the best of our knowledge, this is the first numerical study to employ the Feynman-Kac scheme using random walks to \eqref{pam}. 
To successfully implement the Feynman-Kac approximation using random walks, as presented in this paper, we must address the following two questions:
\begin{itemize}
 \item Do the approximate solutions converge to the true solution?
 
 \item If so, what is the rate of convergence? 
\end{itemize} 

The first question appears straightforward to confirm in terms of convergence in distribution. Indeed, in the study of directed polymers in random environments, this has been extensively investigated within a more general framework (cf. \cite{ap-14-alberts-khanin-quastel, jems-16-caravenna-sun-zygouras, ejp-23-chen-gao, spa-20-rang, ejp-24-rang-song-wang}). 
However, the strong ($L^2(\Omega)$) convergence, which is expected in the scenario of numerical solutions, remains unknown. By comparing the chaos expansions of the approximate and true solutions, the $L^2(\Omega)$ convergence depends on estimating the difference between the `densities' of rescaled simple random walks and Brownian motion in corresponding Hilbert space associated to the driving noise (see Subsection \ref{ss_malliavin}). 

Unfortunately, the density of rescaled simple random walks does not always converge. This can be illustrated with a simple argument. Consider a rescaled simple random walk $\sqrt{h} S_{\lfloor t /h\rfloor}$ with parameter $h > 0$. It is well-known that as $h \downarrow 0$, $\sqrt{h} S_{\lfloor t /h\rfloor}$ converges in distribution to a Brownian motion. Fix $t > 0$ and fix $h > 0$ sufficiently small. Assume, as intended, that the densities of $\sqrt{h} S_{\lfloor t /h\rfloor}$ and $B_t$ are close. Here, the density of $\sqrt{h} S_{\lfloor t /h\rfloor}$ refers to a function $f$ on $\bR_{\geq 0} \times \bR$ defined as 
\[
f_h (t, x) = \frac{1}{\sqrt{h}}\bP \Big(S_{\lfloor t /h\rfloor} = \big\lfloor x/\sqrt{h} \big\rfloor\Big). 
\]
Now, increase $t$ slightly to $t'$ so that $\lfloor t'/h \rfloor = \lfloor t/h \rfloor + 1$. Since Brownian motion has H\"{o}lder continuous densities, $B_t$ and $B_{t'}$ have very similar densities. In contrast, the densities of $\sqrt{h} S_{\lfloor t /h\rfloor}$ and $\sqrt{h} S_{\lfloor t' /h\rfloor}$ differ drastically, as they are supported on disjoint sets.

To address this issue, instead of using $\sqrt{h} S_{\lfloor t /h\rfloor}$, we can approximate Brownian motion with $\sqrt{h} \lfloor\frac{1}{2} S_{\lfloor 4 t /h\rfloor}\rfloor$. Then, one can prove that the densities $\widetilde{f}_h (t, x)$ of $ \sqrt{h} \lfloor\frac{1}{2}S_{\lfloor 4 t /h\rfloor}\rfloor$ form a convergent sequence in the underlining Hilbert space as $h \downarrow 0$.  This allows for a precise estimation of the error in $L^2(\Omega)$. An illustration of the issue and our proposed resolution is provided in Figure \ref{fig:overall}, while a more precise construction, assuming a flat initial condition, is provided in the next subsection.

\begin{figure}
  \centering
  \begin{subfigure}[b]{0.48\textwidth}
  \begin{tikzpicture}
\begin{axis}[
    width=\textwidth,
    height=0.55\textwidth, 
    domain=-3:3,
    samples=200,
    axis lines=left, 
    legend pos=outer north east,
    legend style={
        at={(0.5,-0.15)}, 
        anchor=north,
        legend columns=2,
        /tikz/every even column/.append style={column sep=0.5cm},
        font=\small,
    },
]

\addplot[
    thick,
    blue,
]
{1 / sqrt(2*pi) * exp(-x^2/2)};

\addplot[
    red,
    samples=1000,
    const plot,
    mark=none
]
coordinates {

(-3, 0.0030) (-2.846, 0.0030)
(-2.846, 0) (-2.5298, 0)
(-2.5298, 0.0308) (-2.2135, 0.0308)
(-2.2135, 0) (-1.897, 0) 
(-1.897, 0.1389) (-1.5811, 0.1389)
(-1.5811, 0) ( -1.2649, 0)
(-1.2649, 0.3705) (-0.9486, 0.3705)
(-0.9486, 0) (-0.6324, 0)
(-0.6324, 0.6485) (-0.3162, 0.6485)
(-0.3162, 0) (0, 0)
(0, 0.7782) (0.3162, 0.7782)
(0.3162, 0) (0.6324, 0)
(0.6324, 0.6485) (0.9486, 0.6485)
(0.9486, 0) (1.2649, 0)
(1.2649, 0.3705) (1.5811, 0.3705)
(1.5811, 0) (1.897, 0)
(1.897, 0.1389) (2.2135, 0.1389)
(2.2135, 0) (2.5298, 0)
(2.5298, 0.0308) (2.846, 0.0308)
(2.846, 0) (3, 0)
};
\end{axis}
\end{tikzpicture}
    \caption{Comparison of $p_1$ and $f_{0.1}(1, \cdot)$.}
    \label{fig:sub1}
  \end{subfigure}
  \hfill
  \begin{subfigure}[b]{0.48\textwidth}
   \begin{tikzpicture}
\begin{axis}[
    width=\textwidth,
    height=0.55\textwidth, 
    domain=-3:3,
    samples=200,
    axis lines=left, 
    legend pos=outer north east,
    legend style={
        at={(0.5,-0.15)}, 
        anchor=north,
        legend columns=2,
        /tikz/every even column/.append style={column sep=0.5cm},
        font=\small,
    },
]

\addplot[
    red,
    samples=1000,
    const plot,
    mark=none,
    forget plot
]
coordinates {

(-3, 0.0030) (-2.846, 0.0030)
(-2.846, 0) (-2.5298, 0)
(-2.5298, 0.0308) (-2.2135, 0.0308)
(-2.2135, 0) (-1.897, 0) 
(-1.897, 0.1389) (-1.5811, 0.1389)
(-1.5811, 0) ( -1.2649, 0)
(-1.2649, 0.3705) (-0.9486, 0.3705)
(-0.9486, 0) (-0.6324, 0)
(-0.6324, 0.6485) (-0.3162, 0.6485)
(-0.3162, 0) (0, 0)
(0, 0.7782) (0.3162, 0.7782)
(0.3162, 0) (0.6324, 0)
(0.6324, 0.6485) (0.9486, 0.6485)
(0.9486, 0) (1.2649, 0)
(1.2649, 0.3705) (1.5811, 0.3705)
(1.5811, 0) (1.897, 0)
(1.897, 0.1389) (2.2135, 0.1389)
(2.2135, 0) (2.5298, 0)
(2.5298, 0.0308) (2.846, 0.0308)
(2.846, 0) (3, 0)
};
\addplot[
    magenta,
    dashed,
    samples=1000,
    const plot,
    mark=none,
    forget plot
]
coordinates {

(-3, 0.0030) (-2.846, 0.0030)
(-2.846, 0.0308) (-2.2135, 0.0308)
(-2.2135, 0.1389) (-1.5811, 0.1389)
(-1.5811, 0.3705) (-0.9486, 0.3705)
(-0.9486, 0.6485) (-0.3162, 0.6485)
(-0.3162, 0.7782) (0.3162, 0.7782)
(0.3162, 0.6485) (0.9486, 0.6485)
(0.9486, 0.3705) (1.5811, 0.3705)
(1.5811,  0.1389) (2.2135, 0.1389)
(2.2135,  0.0308) (2.846, 0.0308)
(2.846, 0.0030) (3, 0.0030)
};
\end{axis}
\end{tikzpicture}
    \caption{Fill the $0$-intervals by their right neighbourhoods.}
    \label{fig:sub2}
  \end{subfigure}  
  \hfill
  \begin{subfigure}[b]{0.48\textwidth}
\begin{tikzpicture}
\begin{axis}[
    width=0.9\textwidth,
    height=0.55\textwidth, 
    domain=-3:3,
    samples=200,
    axis lines=left, 
    legend pos=outer north east,
    legend style={
        at={(0.5,-0.15)}, 
        anchor=north,
        legend columns=2,
        /tikz/every even column/.append style={column sep=0.5cm},
        font=\small,
    },
]

\addplot[
    magenta,
    dashed,
    samples=1000,
    const plot,
    mark=none,
    forget plot
]
coordinates {

(-3.000, 0.0015) (-2.846, 0.0015)
(-2.846, 0.0154) (-2.2135, 0.0154)
(-2.2135, 0.0694) (-1.5811, 0.0694)
(-1.5811, 0.1852) (-0.9486, 0.1852)
(-0.9486, 0.3242) (-0.3162, 0.3242)
(-0.3162, 0.3891) (0.3162, 0.3891)
(0.3162, 0.3242) (0.9486, 0.3242)
(0.9486, 0.1852) (1.5811, 0.1852)
(1.5811, 0.0694) (2.2135, 0.0694)
(2.2135, 0.0154) (2.846, 0.0154)
(2.846, 0.0015) (3.000, 0.0015)
};

\addplot[
    white,
    dashed,
    samples=1000,
    const plot,
    mark=none,
    forget plot
]
coordinates {
(-0.3162, 0.7782) (0.3162, 0.7782)
};

\end{axis}
\end{tikzpicture}    
\caption{Compress the height by one-half.}
    \label{fig:sub3}
  \end{subfigure}
   \hfill
  \begin{subfigure}[b]{0.48\textwidth}
\begin{tikzpicture}
\begin{axis}[
    width=0.9\textwidth,
    height=0.55\textwidth, 
    domain=-3:3,
    samples=200,
    axis lines=left, 
    legend pos=outer north east,
    legend style={
        at={(0.5,-0.15)}, 
        anchor=north,
        legend columns=2,
        /tikz/every even column/.append style={column sep=0.5cm},
        font=\small,
    },
]

\addplot[
    thick,
    blue,
]
{1 / sqrt(2*pi) * exp(-x^2/2)};

\addplot[
    magenta,
    dashed,
    samples=1000,
    const plot,
    mark=none,
    forget plot
]
coordinates {

(-3.000, 0.0015) (-2.846, 0.0015)
(-2.846, 0.0154) (-2.2135, 0.0154)
(-2.2135, 0.0694) (-1.5811, 0.0694)
(-1.5811, 0.1852) (-0.9486, 0.1852)
(-0.9486, 0.3242) (-0.3162, 0.3242)
(-0.3162, 0.3891) (0.3162, 0.3891)
(0.3162, 0.3242) (0.9486, 0.3242)
(0.9486, 0.1852) (1.5811, 0.1852)
(1.5811, 0.0694) (2.2135, 0.0694)
(2.2135, 0.0154) (2.846, 0.0154)
(2.846, 0.0015) (3.000, 0.0015)
};

\addplot[
    white,
    dashed,
    samples=1000,
    const plot,
    mark=none,
    forget plot
]
coordinates {
(-0.3162, 0.7782) (0.3162, 0.7782)
};
\end{axis}
\end{tikzpicture}
\caption{Comparison of $p_1$ and $\widetilde{f}_{0.1}(1, \cdot)$.}
    \label{fig:sub4}
\end{subfigure}
  \caption{Comparison of densities of Brownian motion and rescaled random walk.}
  \label{fig:overall}
\end{figure}
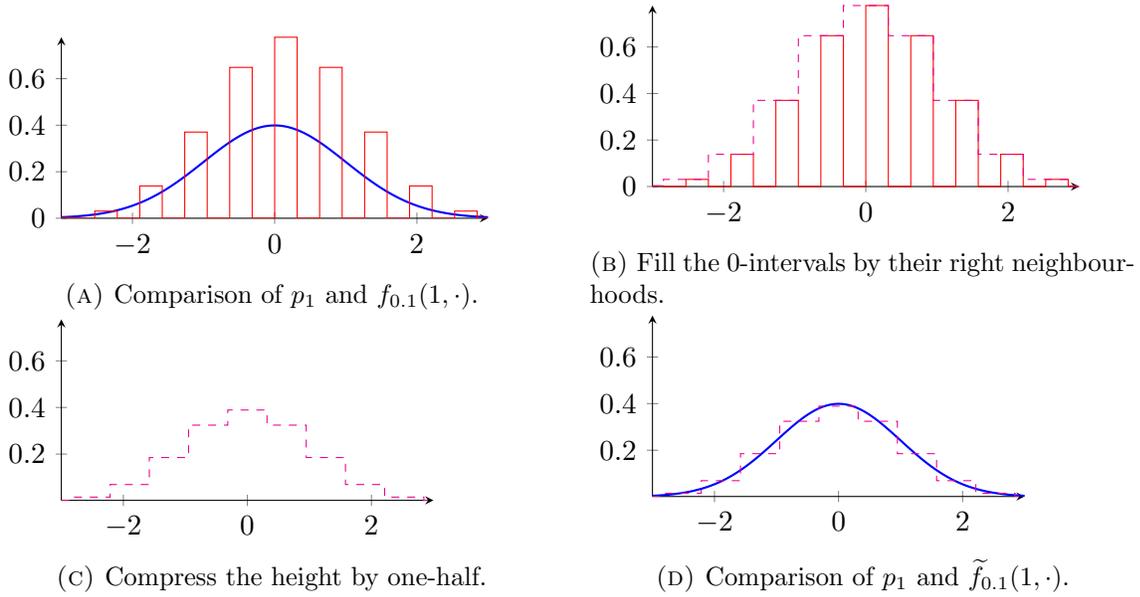

\subsection{Discrete Feynman-Kac approximation}\label{ss_main}

Let $u = \{u (t, x) \colon (t, x) \in \bR_{\geq 0} \times \bR\}$ be the solution to \eqref{pam} with a flat initial condition $u(0, \cdot) \equiv 1$. Then, we can formally write the Feynman-Kac representation for $u$ as follows
\begin{align}\label{eq_fk}
 u(t, x) = 
 \bE^B \bigg[\exp \Big(\int_0^t \int_{\bR} \delta (B_{t-s} + x - y) W(ds, dy) - \frac{1}{2}\| \delta (B_{t - \cdot} + x - *)\|_{\cH \otimes \cH_*}^2\Big)\bigg], 
\end{align}
where, $B$ is a Brownian motion independent of $W$, $\delta$ denotes the Dirac delta function at the origin, and $\cH$ and $\cH_*$ are Hilbert spaces corresponding to the time and space variables of $W$, respectively, which will be defined in Subsection \ref{ss_malliavin} below. For instance, when the driving noise is space-time white, 
\[
\cH \otimes \cH_* = L^2(\bR_{\geq 0}) \otimes L^2(\bR) = L^2(\bR_{\geq 0} \times \bR). 
\]

In the next step, we approximate \eqref{eq_fk} by substituting the Brownian motion with a simple random walk. 
Let $S = \{S_k \colon k = 0, 1, \dots\}$ be a simple random walk, namely, $S_0 = 0$, and $\{Z_k \coloneqq S_k - S_{k - 1}, k = 1, 2, \dots\}$ forms an i.i.d. sequence of Rademacher random variables with $\bP (Z_k = - 1) = \bP (Z_k = 1) = \frac{1}{2}$. 
For any $h > 0$, let 
\begin{align}\label{def_wh}
 W_h (m, n) \coloneqq 
 \frac{1}{2\sqrt{h}}\int_0^{\infty} \int_{\bR} \cT_h^{(m)} (t) \cS_h^{(n)} (x) W(dt, dx), 
\end{align}
for all $(m, n ) \in \bZ_{> 0} \times \bZ$, where $\cT_h^{(m)}$ and $\cS_h^{(n)}$ are indicator functions on intervals
$[(m - 1)h, mh )$ and $[2n \sqrt{h}, 2 (n + 1) \sqrt{h} )$, respectively. 
Then, 
$W_h = \{W_h (m, n) \colon (m, n) \in \bZ_{> 0} \times \bZ\}$ 
is a sequence of centred Gaussian random variables with covariance
\begin{align} \label{eq_cov-w_h}
 \bE \big[W_h (m_1, n_1) W_h (m_2, n_2)\big] = & (4h)^{-1} \big\langle \cT_h^{(m_1)}, \cT_h^{(m_2)} \big\rangle_{\cH} \big\langle \cS_h^{(n_1)}, \cS_h^{(n_2)} \big\rangle_{\cH_*}. 
\end{align}

Let $u_h$ be a random field index by $\bZ_{\geq 0} \times \bZ$, and for any $(m, n) \in \bZ_{\geq 0}\times \bZ$, 
\begin{align}\label{fk_h_X}
 u_h (m, n) \coloneqq & \bE^{S} \bigg[\wick \bigg(\sum_{i = 1}^{m} W_h \Big(i, \Big\lfloor\frac{S_{m + 1 - i} }{2} \Big\rfloor + n \Big) \bigg) \bigg], 
\end{align}
where $\wick$ is a function on the space of centred Gaussian random variables that are (random) functionals of $W$, given by
\begin{align}\label{def_wick}
 \wick (Z) \coloneqq \exp \Big(Z - \frac{1}{2}{\rm Var}^W(Z) \Big),
\end{align}
where ${\rm Var}^W$ denotes the variance with respect to $W$.

\begin{remark}
For all $m, n \in \bZ$, define
\begin{align}\label{def_gmm-t}
 \Gamma (m) \coloneqq \big\langle \1_{[|m|, |m| + 1]}, \1_{[0, 1]} \big\rangle_{\cH} = \begin{dcases}
 \frac{1}{2} \big(|m + 1|^{2H} + |m - 1|^{2H} - 2 |m|^{2H}\big), & H > \frac{1}{2}, \\
 \1_{\{0\}} (m), & H = \frac{1}{2};
 \end{dcases}
\end{align}
and
\begin{align}\label{def_gmm*-s}
 \Gamma_* (n) \coloneqq \big\langle \1_{[|n|, |n| + 1]}, \1_{[0, 1]} \big\rangle_{\cH_*} = \begin{dcases}
 \frac{1}{2} \big(|n + 1|^{2H_*} + |n - 1|^{2H_*} - 2 |n|^{2H_*}\big), & H_* > \frac{1}{2}, \\
 \1_{\{0\}} (n), & H_* = \frac{1}{2};
 \end{dcases}
\end{align}
see Subsection \ref{ss_malliavin} for a detailed discussion of the spaces $\cH$ and $\cH_*$. 
We can calculate its covariance as expressed in \eqref{eq_cov-w_h} by using the space-time homogeneity of the noise, and obtain
\begin{align}\label{eq_cov-wh}
 {\rm Cov} \big[W_h (m_1, n_1) W_h (m_2, n_2)\big] = & (4h)^{-1} H(2H - 1)   \int_0^{h} dt \int_{|m_2 - m_1|h}^{(|m_2 - m_1| + 1)h} ds |t - s|^{2H - 2} \nonumber\\
 & \times H_* (2H_* - 1) \int_0^{2\sqrt{h}} dx \int_{2|n_2 - n_1|\sqrt{h}}^{2(|n_2 - n_1| + 1)\sqrt{h}} dy |x - y|^{2H_* - 2} \nonumber\\
 = & 2^{4H_* - 2} h^{2H + H_* - 1} \Gamma (m_2 - m_1) \Gamma_* (n_2 - n_1). 
\end{align}
We note that in formula \eqref{eq_cov-wh},  the assumptions  $H > \frac{1}{2}$ and $H_* > \frac{1}{2}$ are made implicitly. However, these conditions are not essential and can be relaxed without difficulty.
Hence,
	\begin{align*}
	{\rm Var}^W & \bigg(\sum_{i = 1}^{m} W_h \Big(i, \Big\lfloor\frac{S_{m + 1 - i} }{2} \Big\rfloor + n \Big) \bigg) =  \sum_{i = 1}^m \sum_{j = 1}^m {\rm Cov}^W  \bigg( W_h \Big(i, \Big\lfloor\frac{S_{m + 1 - i} }{2} \Big\rfloor + n \Big), W_h \Big(i, \Big\lfloor\frac{S_{m + 1 - j} }{2} \Big\rfloor + n \Big) \bigg) \\ 
	= & 2^{4 H_* - 2} h^{2H + H_* - 1} \sum_{i = 1}^m \sum_{j = 1}^m \frac{1}{2} \big(|i - j + 1|^{2H} + |i - j - 1|^{2H} - 2 |i - j|^{2H}\big) \\ 
	& \qquad \times \frac{1}{2} \bigg(\Big|\Big\lfloor\frac{S_{m + 1 - i} }{2} \Big\rfloor  - \Big\lfloor\frac{S_{m + 1 - j} }{2} \Big\rfloor  + 1\Big|^{2H_*} + \Big|\Big\lfloor\frac{S_{m + 1 - i} }{2} \Big\rfloor  - \Big\lfloor\frac{S_{m + 1 - j} }{2} \Big\rfloor  - 1\Big|^{2H_*} \\ 
	& \qquad \qquad  - 2 \Big|\Big\lfloor\frac{S_{m + 1 - i} }{2} \Big\rfloor  - \Big\lfloor\frac{S_{m + 1 - j} }{2} \Big\rfloor \Big|^{2H} \bigg) ;
	\end{align*}
This observation, together with the definition \eqref{def_wick} of Wick renormalised exponential, indicates that for any realisation of $W$,  the quantity in  \eqref{fk_h_X} can be explicitly computed and yields a positive value. 
\end{remark}

The following theorem presents our first main result concerning the convergence rate of the discrete Feynman-Kac approximation to \eqref{pam} with a flat initial condition. The second main result, addressing the case of \eqref{pam} with a delta initial condition, is postponed to Theorem \ref{thm_rate-delta} in Section \ref{sec_delta}. 
\begin{theorem}\label{thm_rate-frac}
 Let $u$ be the solution to \eqref{pam} with a flat initial condition, namely, $u(0, \cdot) \equiv 1$, driven by a fractional Brownian sheet with Hurst parameters $H \geq \frac{1}{2}$ and $H_* \geq \frac{1}{2}$ in time and space, respectively. Let $u_h = \{u_h(m, n) \coloneqq (m, n) \in \bZ_{\geq 0}\times \bZ\}$ be defined in \eqref{fk_h_X} with some $h \in (0, 1)$. Then, for any $p \geq 2$, $(t, x) \in \bR_{\geq 0} \times \bR$ and $\epsilon \in (0, (2H + H_* - 1) \wedge 1 )$, 
 \begin{align}\label{ieq_rate-frac}
  \big\|u_h \big( \lfloor t/h \rfloor ,  \big\lfloor x/\sqrt{4h} \big\rfloor\big) - u(t, x) \big\|_{L^p (\Omega)}^2 \lesssim h^{(2 H + H_* - 1)\wedge 1 - \epsilon}, 
 \end{align}
 where, for any nonnegative functions $A$ and $B$ defined on the same domain $\rm Dom$, $A(x)\lesssim B(x)$ indicates that there exists an (implicit) universal constant $c > 0$, such that $A(x) \leq c B(x)$ for all $x \in {\rm Dom}$, and the implicit constant in \eqref{ieq_rate-frac} depends on $p$, $t$ and $\epsilon$. 
\end{theorem}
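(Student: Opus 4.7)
The plan is to compare the Wiener chaos expansions of $u(t,x)$ and $u_h(\lfloor t/h\rfloor, \lfloor x/\sqrt{4h}\rfloor)$ term by term in the Hilbert space $(\cH\otimes\cH_*)^{\otimes k}$, and then sum using Nelson's hypercontractivity to upgrade from $L^2$ to $L^p$.

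First I would write both sides as Wiener chaos. From \eqref{eq_fk}, commuting $\bE^B$ past the chaos decomposition of the Wick exponential gives
\[
u(t,x) = \sum_{k=0}^\infty I_k(f_k(t,x)), \qquad f_k(t,x)(s_1,y_1,\dots,s_k,y_k) = \frac{1}{k!}\bE^B\Big[\prod_{i=1}^k \delta\big(B_{t-s_i}+x-y_i\big)\Big].
\]
Expanding the Wick exponential in \eqref{fk_h_X} likewise yields $u_h(m,n) = \sum_{k=0}^\infty I_k(g_k^h(m,n))$, where $g_k^h$ is the $\bE^S$-symmetrized $k$-fold tensor of the kernel $G_h^{m,n}(s,y) := (2\sqrt h)^{-1}\sum_{i=1}^m \cT_h^{(i)}(s)\cS_h^{(\lfloor S_{m+1-i}/2\rfloor +n)}(y)$ associated with $\sum_i W_h(i,\lfloor S_{m+1-i}/2\rfloor+n) = \int G_h^{m,n}\, dW$. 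By orthogonality of distinct chaoses and Nelson's hypercontractivity on a fixed chaos,
\[
\big\|u(t,x) - u_h(m,n)\big\|_{L^p(\Omega)} \leq \sum_{k=0}^\infty (p-1)^{k/2}\sqrt{k!}\,\big\|f_k(t,x) - g_k^h(m,n)\big\|_{(\cH\otimes\cH_*)^{\otimes k}},
\]
so the proof reduces to showing the right side is $O(h^{[(2H+H_*-1)\wedge 1-\epsilon]/2})$ when $m=\lfloor t/h\rfloor$ and $n=\lfloor x/\sqrt{4h}\rfloor$.

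The crux is the kernel comparison. By the Markov property of $B$ and the independence of the increments of $S$, both kernels factor through transition densities at the sampled times, and the Hilbert norm unfolds into iterated integrals involving the covariance density $H(2H-1)|s-s'|^{2H-2}H_*(2H_*-1)|y-y'|^{2H_*-2}$ of the fractional noise, tested against these density products. What is needed is that the rescaled walk density $\widetilde f_h(t,x) = h^{-1/2}\bP(\lfloor S_{\lfloor 4t/h\rfloor}/2\rfloor = \lfloor x/\sqrt h\rfloor)$ is close to the Gaussian density $p_t^B(x) = (2\pi t)^{-1/2}\exp(-x^2/(2t))$, not merely pointwise but as a distribution paired with the covariance kernel, to the tune of $h^{[(2H+H_*-1)\wedge 1-\epsilon]/2}$. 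A local central limit theorem with Edgeworth-type error terms, combined with the H\"older regularity of $(s,y)\mapsto p_s^B(y)$, should furnish such an estimate; the saturation $\wedge 1$ appears because two derivatives in time is the highest order extractable from a one-step Euler-type discretization, and the use of $\lfloor\cdot/2\rfloor\circ S_{\lfloor 4t/h\rfloor}$ (rather than $S_{\lfloor t/h\rfloor}$) is precisely what prevents the support mismatch illustrated in Figure \ref{fig:overall}.

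The main obstacle is turning this density comparison into a \emph{uniform-in-$k$} Hilbert norm estimate, so that summing over $k$ does not destroy the rate. Concretely, the target is a bound of the form $\|f_k-g_k^h\|_{(\cH\otimes\cH_*)^{\otimes k}}^2 \leq (C_t)^{2k}h^{(2H+H_*-1)\wedge 1 - \epsilon}/(k!)^2$ with $C_t$ independent of $k$, where the $(k!)^{-2}$ reflects the symmetric-kernel convention together with the natural ordered-simplex decomposition of fractional integrals. Inserted into the hypercontractive inequality, Stirling's formula absorbs $\sqrt{k!}$ against $(k!)^{-1}$ and the resulting geometric series in $(p-1)^{1/2}C_t/\sqrt{k!}$ converges to a constant depending only on $p$, $t$, $\epsilon$, producing \eqref{ieq_rate-frac}. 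The $\epsilon$ correction is the price paid to control the logarithmic loss arising from the integrable singularities of the covariance kernel near the diagonal and from gluing the suboptimal local rates uniformly across the $k$ time-space cells.
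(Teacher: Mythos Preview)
Your outline matches the paper's strategy closely: both $u$ and $u_h$ are expanded in Wiener chaos (the paper's Lemma~\ref{lmm-chaos-h} produces exactly your $g_k^h$ in product form via the Markov property of $S$), hypercontractivity reduces matters to $L^2$, and the core analytic input is indeed a local-limit-theorem comparison between the rescaled walk density and the heat kernel in the $\cH_*$-norm (Lemma~\ref{lmm-frac-g-p}), which is then propagated to the $k$-fold kernel by a telescoping induction in $k$ (Lemma~\ref{prop_frac-f-g}).

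Two places in your sketch need tightening. First, you do not say how the $\cH^{\otimes k}$ norm in time is controlled; the paper handles it not by pairing against the singular kernel $|s-s'|^{2H-2}$ directly but via the Hardy--Littlewood--Sobolev inequality (Lemma~\ref{lmm_hls}), which converts the time norm into an $L^{1/H}$ integral over the ordered simplex of the already-obtained $\cH_*^{\otimes k}$ bound. Second, your target decay $\|f_k-g_k^h\|_{(\cH\otimes\cH_*)^{\otimes k}}^2 \leq (C_t)^{2k}h^{\cdots}/(k!)^2$ is too optimistic: the simplex integral of a product of factors $(s_{i+1}-s_i)^{(H_*-1)/(2H)}$ does not produce a second $1/k!$ but rather $1/\Gamma(\alpha k+\beta)$ with $\alpha=(2H+H_*-1)/(2H)\in(0,1]$, so the resulting series over $k$ is of Mittag--Leffler type. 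This still sums (so your conclusion survives), but the implicit constant grows like $\exp(Ct^{1/\alpha})$ rather than the tame geometric series your last paragraph describes.
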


The convergence rate established in Theorem \ref{thm_rate-frac}, and also in Theorem \ref{thm_rate-delta} for a delta initial condition, is `almost' optimal. Specifically, for a fully discrete scheme, one cannot expect the convergence rate to exceed the exponent of the H\"{o}lder continuity. As stated in Theorem \ref{thm-holder} below, the exponent in \eqref{ieq_rate-frac} matches the time exponent in \eqref{eq_holder}. Notably, the small correction order $\epsilon$ in \eqref{eq_holder} could potentially be eliminated through a more detailed analysis, thereby achieving matching upper and lower bounds. This reinforces that our convergence rate result is `almost' optimal, up to an arbitrarily small correction $\epsilon > 0$. We hope this correction term $\epsilon$ can be eliminated in future studies, achieving the optimal order as proved for lattice approximation with space-time white noise in \cite[Theorem 3.1]{pa-99-gyongy}. 

Another remark is on how the discretisation depends on the terminal time $t$. We take the time step $h$ independent of $t$. Therefore, the number of steps is $N = \lfloor t/ h \rfloor$, which grows linearly with $t$. The associated simple random walk $\{S_1, \dots, S_{N}\}$ has $2^{N}$ possible paths, hence the computation is exponential in $t$. Moreover, the implicit constant in \eqref{ieq_rate-frac} should be comparable to  $\|u (t, x)\|_p^2$, which grows exponentially in $t$ as well. For example, in the case of space-time white noise the growth is of order $\exp(\frac{1}{24}p(p^2 - 1) t)$; see \cite[Formula (1.7)]{aihp-15-chen}.

A related work \cite{spde-18-joseph} investigates a discrete stochastic heat equation with a Lipschitz continuous diffusion coefficient, driven by i.i.d. centred random variables. It is shown that, under appropriate spatial scaling, the solution to the discrete equation converges in distribution to that of its continuum counterpart. A distinguishing feature of Joseph’s result, compared to other works on directed polymers in random environments, is the derivation of moment convergence. However, this does not imply the strong convergence established in the present paper.

After completing the first version of the manuscript, we became aware of several related works that studied positivity-preserving splitting schemes for stochastic heat equations. In particular, \cite{puz-24-brehier-cohen-ulander} considers equations driven by Brownian motion, while \cite{esimmna-24-brehier-cohen-ulander} addresses the case of space-time white noise. Both works focus on bounded spatial domains and employ the Lie-Trotter splitting method to construct and analyse their schemes. In contrast, our work addresses \eqref{pam} driven by noise on the entire spatial domain, which may also be coloured in space and/or time.

\subsection{Prospects for future work}

As an initial step in this direction, this paper focuses on the one-dimensional equation and restricts the driving noise to fractional Brownian sheets that are white or more regular than white in both time and space. Consequently, several potential avenues exist for extending our results. For instance, it would be interesting to explore the case of multiple spatial dimensions, rough noises as studied in \cite{abel-18-hu-huang-le-nualart-tindel, aihp-19-chen}, general Gaussian noises satisfying the reinforced Dalang condition introduced in \cite{ams-00-sanz-sarra}, and time-derivative noise as in \cite{ap-12-hu-lu-nualart}. 

Additionally, as previously discussed, approximating the density of simple random walks requires a slight modification to the standard rescaling $\sqrt{h} S_{\lfloor t/h\rfloor}$ to address issues arising from the alternating state space in odd and even steps. Another potentially applicable approach might involve replacing the simple random walk with other types of random walks, such as 
 $X_n = Z_1 + \dots + Z_n$, where $Z_i$'s are i.i.d. random variables taking values in $\{-1, 0, 1\}$ with probabilities $\bP(Z_i = - 1) = \bP(Z_i = 1) = \alpha$ and $\bP(Z_i = 0) = 1 - 2\alpha$, for some $\alpha \in (0, 1/2)$. It would be valuable to know whether this adjustment suffices to achieve strong convergence and whether the parameter $\alpha$ influences the convergence rate. 

Furthermore, as highlighted in the abstract and Section \ref{sec_dpre}, our theorems provide a quantitative framework for analysing the convergence of the partition function of directed polymers in Gaussian environments. A natural next step would be extending these results to other types of random environments. For general random environments, one possible approach is coupling the environments as Gaussian functionals, allowing the strategies developed in this paper to be adapted to the new setting. Alternatively, one could pursue a quantitative analysis of the convergence of polynomial chaos to Wiener chaos. The Lindeberg principle could be a useful tool for addressing this question. It provides a quantitative convergence of polynomial chaos to Wiener chaos (cf. \cite[Theorem 2.6]{jems-16-caravenna-sun-zygouras} and \cite[Theorem 3.18]{am-10-mossel-odonnell-krzysztof}), expressed in terms of the following metric 
\[
 d (\mu, \nu) \coloneqq \sup_{\|\phi\|_{3, \infty} \leq 1} \Big|\int_{\mathbb{R}} \phi(x) \mu (d x) - \int_{\mathbb{R}} \phi(x) \nu (d x) \Big|, 
\]
where $\|\phi\|_{3, \infty} \coloneqq \|\phi\|_{\infty} + \|\phi'\|_{\infty} + \|\phi''\|_{\infty} + \|\phi'''\|_{\infty} $. One challenge in applying the Lindeberg principle is estimating the `influence' of a single environment random variable on the partition function, which is particularly difficult for space-time dependent environments. We hope these challenges can be addressed in future work, potentially opening new horizons for research in this field.

\subsection{Organisation of the paper} 
This paper is organised as follows. In Section \ref{sec_pre}, we provide some necessary preliminaries. Section \ref{sec_flat} focuses on \eqref{pam} with a flat initial condition, and includes the proof of Theorem \ref{thm_rate-frac}. The \eqref{pam} with a delta initial condition is explored in Section \ref{sec_delta}, where a parallel result, Theorem \ref{thm_rate-delta}, to Theorem \ref{thm_rate-frac} is established. Section \ref{sec_dpre} contains a reinterpretation of our results in the context of directed polymers in random environments. 

\section{Preliminaries}\label{sec_pre}

\subsection{Notation}\label{ssec_nota}
Let $h > 0$ and $k \in \bZ_{>0}$ be some generic constants, and let $(t, x) \in \bR_{> 0} \times \bR$ be some generic parameters. 
Denote by $G \colon \bZ_{\geq 0} \times \bZ \to \bR$ the probability mass function of the simple random walk $S$, namely, 
\begin{align}\label{def_Gh}
 G(m, n) \coloneqq \bP \big(S_m = n\big), \quad (m, n) \in \bZ_{\geq 0} \times \bZ. 
\end{align}
For any $y \in \bR$, we write
\begin{align}\label{def_t_h-x_h}
 \lfloor s \rfloor_h \coloneqq \lfloor s/h \rfloor .
 \end{align}
 Let $(\bfs_k, \bfy_k) = (s_1, \dots, s_k, y_1, \dots, y_k)\in [0, t]^{k} \times \bR^k$. We introduce a permutation $\sigma$ on $\{1, \dots, k\}$ such that 
 \[
 0 \leq s_{\sigma (1)} \leq s_{\sigma(2)} \leq \dots \leq s_{\sigma (k)} \leq t, 
 \] 
 and make use of notation 
 \[
 [0, t]_<^k \coloneqq \big\{\bfs_k \in [0, t]^k, s_1 < s_2 <\dots < s_k <t \big\}. 
 \]
Additionally, 
$(\ks_0, \ks_1, \dots, \ks_k) \in \bZ_{\geq 0}^{k + 1}$ and $(\ky_0, \ky_1, \dots, \ky_k) \in \bZ^{k + 1}$ are defined as follows, 
\begin{align}\label{def_ks}
 \ks_i \coloneqq \lfloor s_{\sigma(i + 1)}\rfloor_h - \lfloor s_{\sigma(i)}\rfloor_h , 
  \end{align}
 and 
  \begin{align}\label{def_ky}
  \ky_i \coloneqq &
  2 \big[\lfloor y_{\sigma(i + 1)}\rfloor_{2\sqrt{h}} - \lfloor y_{\sigma(i)}\rfloor_{2\sqrt{h}}\big] + \tau\big(\lfloor t \rfloor_h - \lfloor s_{\sigma(i + 1)} \rfloor_h \big) - \tau \big(\lfloor t \rfloor_h - \lfloor s_{\sigma(i)} \rfloor_h\big), 
  \end{align}
  for all $i = 0, \dots, k$ with convention $(s_{\sigma(0)}, y_{\sigma(0)}) \coloneqq (0, 0)$ and $(s_{\sigma(k + 1)}, y_{\sigma(k + 1)}) \coloneqq (t, x)$, where $\tau \colon \bZ \to \{0, 1\}$ is the alternating indicator given by
 \begin{align}\label{def_tau-t}
\tau(n) = \begin{dcases}
 0, & n \text{ is even}, \\
1, & n \text{ is odd}. 
\end{dcases}
\end{align}
Here, the alternating indicator $\tau$ is introduced to ensure that $\ky_i$ in \eqref{def_ky} lies in the state space of $S_{\ks_i}$. Without $\tau$, $\ky_i$ would take values only in the set of even integers, which would fall outside the state space whenever $\ks_i$ is an odd integer.

\subsection{Multiple integrals and chaos expansion}\label{ss_malliavin}
Let $W$ be a fractional Brownian sheet with Hurst parameters $H \geq \frac{1}{2}$ and $H_* \geq \frac{1}{2}$ in time and space, respectively. Then, it can be also considered as an isonormal Gaussian process on a Hilbert space $\kH = \cH \otimes \cH_*$, where $\cH$ is the completion of the space of test functions on $\bR_{\geq 0}$ with respect to the inner product
\begin{align*}
 \langle \phi, \psi \rangle_{\cH} = \int_{\bR_{\geq 0}^2} \gamma (t - s) \phi (s) \psi (t) ds dt; 
\end{align*}
and similarly $\cH_*$ is the completion of the space of test functions on $\bR$ with respect to the inner product
\begin{align*}
 \langle \phi, \psi \rangle_{\cH_*} = \int_{\bR^2} \gamma_* (x - y) \phi (x) \psi (y) dx dy;
\end{align*}
where, with $C_H \coloneqq H (2H-1)$ and $C_{H_*} \coloneqq H_* (2H_*-1)$, 
\begin{align*}
 \gamma (t) \coloneqq \begin{dcases}
  C_H |t|^{2H - 2} , & H > \frac{1}{2}, \\
  \delta(t), & H = \frac{1}{2};
 \end{dcases}
 \quad \text{and} \quad \gamma_* (x) \coloneqq \begin{dcases}
  C_{H_*} |x|^{2H_* - 2} , & H_* > \frac{1}{2}, \\
  \delta(x), & H_* = \frac{1}{2};
 \end{dcases}
\end{align*}
for all nonzero real numbers $t$ and $x$. 
Then, for any $\Phi, \Psi \in \kH$, 
\begin{align*}
 \bE [W(\Phi) W(\Psi)] = & \bE \bigg[ \int_{\bR_{\geq 0}} \int_{\bR} \Phi (t, x) W(dt, dx) \times \int_{\bR_{\geq 0}} \int_{\bR} \Psi (t, x) W(dt, dx) \bigg] 
 = \langle \Phi, \Psi\rangle_{\kH}. 
\end{align*}

Let $\{H_{k} \colon k \in \bZ_{
\geq 0} \}$ denote the (probabilist's) Hermite polynomials, namely, 
\begin{align}\label{def_hermite}
 H_k (x) =
\frac{(-1)^k}{k!} e^{\frac{x^2}{2}} \frac{d^k}{d x^k} e^{-\frac{x^2}{2}}. 
\end{align}
Let $\bfH_0 = \bR$; and for any positive integer $k$, let $\bfH_k$ be the closed linear subspace spanned by random variables 
$\{H_{k} (W(\Phi)) \colon \Phi \in \kH, \|\Phi\|_{\kH} = 1\}$. 
Then, $\bfH_k$ is called the $k$-th chaos. Denote by $\kH^{\odot k}$ the subspace of $\kH^{\otimes k}$ consisting of all symmetric functions. It turns out that the map $I_k$ acting on $\kH^{\odot k}$ characterised by 
\[
	I_k (\Phi^{\otimes k}) = \|\Phi\|_{\kH}^k I_k \bigg(\Big(\frac{\Phi}{\|\Phi\|_{\kH}}\Big)^{\otimes k}\bigg) \coloneqq \|\Phi\|_{\kH}^k H_k \Big(W \Big(\frac{\Phi}{\|\Phi\|_{\kH}}\Big)\Big)
\]
is a linear isometry between $\kH^{\odot k}$ and $\bfH_k$ with the modified norm $\sqrt{k!} \|\cdot\|_{\kH^{\otimes k}}$. In view of \cite[Proposition 3.5]{ws-17-hu}, the Wick renormalised exponential \eqref{def_wick} can be equivalently expressed as a series involving Hermite polynomials. Specifically, ignoring the trivial case $Z \equiv 0$, 
\begin{align}\label{def_wick-herm}
	\wick (Z) = 1 + \sum_{n = 1}^{\infty} \frac{t^n}{n!} H_n \Big(\frac{Z}{t}\Big) , \quad \text{where } t \coloneqq \sqrt{{\rm Var} (Z)}.
\end{align}

Moreover, for any $W$-measurable square integrable random variable $X$, it can be decomposed as an infinite sum of uncorrelated random variables $I_k (f_k) \in \bfH_k$, namely, 
\[
X = \bE[X] + \sum_{k = 1}^{\infty} I_k(f_k), \quad \text{where } f_k \in \kH^{\odot k} \text{ is determined by } X;
\]
cf. \cite[Theorem 1.1.1]{springer-06-nualart}. 

The Skorokhod integral extends the It\^{o} and the Dalang-Walsh integrals to cases where the integrands are non-adapted processes, or/and the driving noises are non-martingale measures. Let $v = \{v (t, x) \colon (t, x) \in \bR_{\geq 0} \times \bR\}$ be a $W$-measurable random field. Suppose that for each $(t, x)$, $v(t, x)$ is square integrable. Then, it has a chaos expansion, 
\begin{align*}
 v(t, x) = \sum_{k = 0}^{\infty} I_{k} (f_k(t, x, \cdot)). 
\end{align*}
Particularly, $v$ is called Skorokhod integrable with respect to $W$ (cf. \cite[Section 1.3.2]{springer-06-nualart}), if both of the following statements are satisfied. 
\begin{itemize}
 \item For each $k \geq 0$, $\widetilde{f}_k \in \kH^{\odot (k + 1)}$, where $\widetilde{f}_k \colon \bR_{\geq 0}^{k + 1} \times \bR^{k + 1} \to \bR$, is the symmetrisation of $f_k$, given by
 \begin{align*} 
  \widetilde{f}_k(z_1, \dots, z_{k + 1}) = \frac{1}{k + 1} \sum_{i = 1}^{k + 1} f_k \big(z_i, z_2, z_3, \dots, z_{i - 1}, z_1, z_{i + 1}, \dots, z_{k + 1}\big), 
 \end{align*}
with $z_i \coloneqq (s_i, x_i)$, $i = 1, \dots, k + 1$. 

\item The following inequality holds
\[
 \sum_{k = 0}^{\infty} (k + 1)! \big\|\widetilde{f}_k\big\|_{\kH^{\otimes (k + 1)}}^2 < \infty. 
\]
\end{itemize}
Moreover, 
the Skorokhod integral of $v$ has the chaos expansion as follows, 
\begin{align*}
 \delta(v) = \int_{\bR_{\geq 0}} \int_{\bR} v(t, x) W(d t, d x) = \sum_{k = 0}^{\infty} I_{k + 1} \big(\widetilde{f}_k \big) . 
\end{align*}
For a more detailed account of the Skorokhod integrals, we refer the readers to the monographs such as \cite{ws-17-hu, springer-06-nualart}. 

 \subsection{Results on \eqref{pam}}\label{ss_pre-pam}

 A random field $u = \{u(t, x) \colon (t, x) \in \bR_{> 0} \times \bR\}$ is called a solution to \eqref{pam} with initial condition $u(0, \cdot) = \mu$ being a signed Radon measure, if the following mild formulation holds
 \begin{align*} 
  u (t, x) = \int_{\bR} p_{t} (x- y) \mu (dy) + \int_0^t \int_{\bR} p_{t-s} (x-y) u (s, y) W(d s, d y) , 
 \end{align*}
 for all $(t, x) \in \bR_{> 0} \times \bR$, where $p_t(x) \coloneqq \frac{1}{\sqrt{2\pi t}} e^{-\frac{x^2}{2t}}$, denotes the heat kernel and the stochastic integral is the Skorokhod integral. For the sake of conciseness,  we only study \eqref{pam} with either a flat or a delta initial conditions in this paper, and refer the reader to \cite{ap-15-chen-dalang} concerning the existence, uniqueness and certain properties of solutions to SPDEs with more general initial conditions. Below, we present several existing results for \eqref{pam}.

The next theorem corresponds to the existence and uniqueness of solutions to \eqref{pam}, and the chaos expansion for the solution. 

 \begin{theorem}\label{thm_eu}
  Let $W$ be a fractional Brownian sheet with Hurst parameters $H \geq \frac{1}{2}$ and $H_* \geq \frac{1}{2}$ in time and space, respectively. Then, the following properties hold. 
  \begin{enumerate}
   \item[1.] With a flat initial condition, namely, $u(0, \cdot) \equiv 1$,  equation \eqref{pam} has a unique solution, which can be expressed as following the chaos expansion 
   \begin{align*} 
 u(t, x) = 1 + \sum_{k = 1}^{\infty} \frac{1}{k!} I_k(f_{k}), 
\end{align*}
where $I_k$ denotes the multiple Wiener-It\^{o} integral and $f_{k} \colon [0, t]^k \times \bR^k \to \bR$ given by
\begin{align}\label{def_fk}
f_k (\bfs_k, \bfy_k) = f_{k}^{t, x} (\bfs_k, \bfy_k) \coloneqq 
 & \prod_{i = 1}^k p_{s_{\sigma (i + 1)} - s_{\sigma (i)}} (y_{\sigma(i + 1)} - y_{\sigma(i)}), 
\end{align}
$\sigma$ is the permutation on $\{1, \dots, k\}$ defined in Subsection \ref{ssec_nota} with convention  $(s_{\sigma (k + 1)}, y_{\sigma (k + 1)}) \break = (t, x)$. 
Moreover, with a universal constant $C > 0$, for every $k \in \bZ_{> 0}$, 
\begin{align}\label{eq_est-chaos-flat} 
 \|f_k(\bfs_k, \cdot)\|_{\cH_*}^2 \leq 
  C^k \prod_{i = 1}^k (s_{\sigma(i + 1)} - s_{\sigma(i)})^{H_* - 1} . 
\end{align}
\item[2.] With a delta initial condition at $z \in \mathbb{R}^d$, namely, $u (0, \cdot) = \delta(z - \cdot)$, equation \eqref{pam} has a unique solution, which can be expressed as following the chaos expansion 
   \begin{align*} 
 u(t, x) = p_{t}(x - z) + \sum_{k = 1}^{\infty} \frac{1}{k!} I_k(f_{k}), 
\end{align*}
  where
\begin{align}\label{def_fk-delta}
 f_k (\bfs_k, \bfy_k) = f_{k}^{t, x} (\bfs_k, \bfy_k) \coloneqq 
  & \prod_{i = 0}^k p_{s_{\sigma (i + 1)} - s_{\sigma (i)}} (y_{\sigma(i + 1)} - y_{\sigma(i)}), 
 \end{align}
with convention $(s_{\sigma (0)}, y_{\sigma (0)}) = (0, z)$ and $(s_{\sigma (k + 1)}, y_{\sigma (k + 1)}) = (t, x)$. Moreover, with a universal constant $C > 0$,  for every $k \in \bZ_{> 0}$, 
\begin{align}\label{eq_est-chaos-delta}
 \|f_k(\bfs_k, \cdot)\|_{\cH_*}^2 \leq 
C^k t^{-H_*} \prod_{i = 0}^k (s_{\sigma(i + 1)} - s_{\sigma(i)})^{H_* - 1}. 
\end{align}
\end{enumerate}
\end{theorem}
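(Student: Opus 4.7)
The plan is to construct the solutions via Picard iteration on the mild formulation, read off the candidate chaos kernels from the iterates, and verify the $\cH_*^{\otimes k}$-norm bounds that make the resulting Wiener chaos series $L^2(\Omega)$-summable. Existence then follows from this summability, and uniqueness from the injectivity of the chaos decomposition, in the spirit of \cite{ap-11-hu-nualart-song, ap-15-chen-dalang}. Concretely, setting $J_0(t,x) \equiv 1$ for the flat case and $J_0(t,x) = p_t(x-z)$ for the delta case, I would iterate
\[
u_{n+1}(t,x) = J_0(t,x) + \int_0^t \int_{\bR} p_{t-s}(x - y)\, u_n(s, y)\, W(ds, dy),
\]
and induct on $n$ using the chaos-level characterisation of Skorokhod integrals: if $v = \sum_k I_k(g_k)$ then $\delta(v) = \sum_k I_{k+1}(\widetilde g_k)$. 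Each iterate has a finite chaos expansion whose $k$-th kernel stabilises once $n \geq k$; sorting time coordinates via the permutation $\sigma$ unwinds the symmetrisation and produces exactly \eqref{def_fk} or \eqref{def_fk-delta}.

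The kernel estimate is the main work. The case $H_* = 1/2$ reduces to a direct Gaussian computation since $\cH_*^{\otimes k} = L^2(\bR^k)$ and $f_k$ is a product of heat kernels in suitable coordinates. For $H_* \in (1/2, 1)$, the key tool is the Hardy-Littlewood-Sobolev inequality in the form $\|g\|_{\cH_*^{\otimes k}}^2 \leq C^k \|g\|_{L^{1/H_*}(\bR^k)}^2$. In the flat case the substitution $z_i = y_{\sigma(i+1)} - y_{\sigma(i)}$ has unit Jacobian (since $y_{\sigma(k+1)} = x$ is fixed) and factorises $f_k = \prod_{i=1}^k p_{s_{\sigma(i+1)} - s_{\sigma(i)}}(z_i)$; combined with $\|p_t\|_{L^{1/H_*}}^2 = C\,t^{H_* - 1}$, this immediately yields \eqref{eq_est-chaos-flat}. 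In the delta case the analogous substitution is constrained by $\sum_{i=0}^k z_i = x - z$, so the product no longer factorises in the free variables; the clean route is the Brownian-bridge identity
\[
\prod_{i=0}^k p_{t_i}(w_i)\,\Big|_{\sum_i w_i = x - z} = p_t(x - z)\cdot \pi(w_0, \dots, w_{k-1}),
\]
where $\pi$ is a mean-zero $k$-variate Gaussian density with $\det \Sigma_\pi = \prod_{i=0}^k t_i / t$. The standard $L^q$-norm formula for Gaussians with $q = 1/H_*$ gives $\|\pi\|_{L^{1/H_*}}^2 = C^k\, t^{1-H_*} \prod_i t_i^{H_* - 1}$, which combined with $p_t(x-z)^2 \leq (2\pi t)^{-1}$ delivers the $t^{-H_*}\prod_i t_i^{H_* - 1}$ factor in \eqref{eq_est-chaos-delta}.

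Once the kernel bounds are in hand, the Dirichlet-type estimate $\int_{[0,t]_<^k} \prod_i (s_{i+1} - s_i)^{H_* - 1}\, d\bfs_k \lesssim C^k\,t^{kH_*}/\Gamma(kH_* + 1)$ (with the analogous formula including the initial interval in the delta case) makes $\sum_k (1/k!)\,\|\widetilde f_k\|_{\kH^{\otimes k}}^2 < \infty$, giving existence; uniqueness then follows from the injectivity of the chaos map applied to any two square-integrable mild solutions. The main obstacle I anticipate is the delta-case kernel estimate: without the bridge factorisation a naive change of variables does not decouple the integrand, and cruder bounds miss the sharp $t^{-H_*}$ prefactor that is essential for the series to converge uniformly on compact $(t, x)$-sets.
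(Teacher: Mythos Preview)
The paper does not prove Theorem~\ref{thm_eu}; immediately after stating it and Theorem~\ref{thm-holder}, the authors write that both results ``are cited from \cite[Lemmas 3.2 and 3.5]{jtp-18-balan-chen}, \cite[Theorem 2.2 and the display after Formula (2.3)]{amsc-19-balan-song-quer_sardanyons}, and \cite[Theorem 4.1]{bej-24-guo-song-song}.'' So there is no in-paper proof to compare against.

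Your proposal is correct and is essentially the argument carried out in those cited references. The Picard-to-chaos identification is standard, and your flat-case bound is exactly Lemma~\ref{lmm_inn_heat} applied factor by factor (you could even skip HLS there, since $f_k$ factorises in the increment variables). For the delta case your Brownian-bridge factorisation is the right idea and matches the heat-kernel identity recorded in the paper as Lemma~\ref{lmm_heat-fac}; the determinant computation $\det\Sigma_\pi=\prod_{i=0}^k t_i/t$ and the Gaussian $L^{1/H_*}$ formula indeed combine with $p_t(x-z)^2\le(2\pi t)^{-1}$ to give the sharp $t^{-H_*}$ prefactor. One point worth making explicit in a write-up is that the tensorised HLS bound $\|g\|_{\cH_*^{\otimes k}}^2\le C^k\|g\|_{L^{1/H_*}(\bR^k)}^2$ is obtained by iterating the one-dimensional inequality variable by variable (with Fubini between steps), since the kernel $\prod_i|y_i-y_i'|^{2H_*-2}$ is not the standard $k$-dimensional Riesz kernel; this is routine but should be stated. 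The summability step then follows from the Dirichlet/Mittag-Leffler estimate exactly as you indicate (with HLS in time when $H>\tfrac12$).
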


The next theorem addresses the H\"{o}lder continuity of the solution to \eqref{pam}. 

\begin{theorem}\label{thm-holder}
  Let $u$ be the solution to \eqref{pam} satisfying the same condition as in Theorem \ref{thm_eu}. Then, for any $s, t \in (0, T]$ and $x, y \in \bR$, it holds that
 \begin{align}\label{eq_holder}
  \bE \big[ |u(t, x) - u(s, y)|^2 \big] \lesssim |t - s|^{2H + H_* - 1 - \epsilon} + |x - y|^{(4H + 2H_* - 2) \wedge 2 - \epsilon}, 
 \end{align}
 for all $\epsilon \in (0, 2H + H_* - 1)$, where the implicit constant depending on $\epsilon$ and $T$, and also $s \wedge t$ for a delta initial condition. 
\end{theorem}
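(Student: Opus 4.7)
The plan is to read everything off the chaos expansion supplied by Theorem~\ref{thm_eu}. By orthogonality of Wiener chaoses and since symmetrisation contracts the $\kH^{\otimes k}$-norm,
\begin{align*}
\bE\bigl[|u(t,x)-u(s,y)|^{2}\bigr] \;=\; \sum_{k\geq 1}\frac{1}{k!}\bigl\|\widetilde{f_k^{t,x}-f_k^{s,y}}\bigr\|_{\kH^{\otimes k}}^{2} \;\leq\; \sum_{k\geq 1}\frac{1}{k!}\bigl\|f_k^{t,x}-f_k^{s,y}\bigr\|_{\kH^{\otimes k}}^{2}.
\end{align*}
Writing $u(t,x)-u(s,y)=[u(t,x)-u(s,x)]+[u(s,x)-u(s,y)]$ and using $(a+b)^{2}\leq 2a^{2}+2b^{2}$, the problem reduces to controlling a pure space increment and a pure time increment separately for each chaos kernel.

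For the space increment, $f_k^{s,x}-f_k^{s,y}$ differs only in the final heat-kernel factor $p_{s-s_{\sigma(k)}}(x-\cdot)-p_{s-s_{\sigma(k)}}(y-\cdot)$. Taking the Fourier transform in the spatial variables, this difference becomes a multiplicative factor $e^{-i\xi_{\sigma(k)}x}-e^{-i\xi_{\sigma(k)}y}$ whose modulus is at most $2^{1-\beta}|\xi_{\sigma(k)}|^{\beta}|x-y|^{\beta}$ for every $\beta\in[0,1]$. Picking $\beta$ arbitrarily close to $\min\{2H+H_{*}-1,1\}$ yields the factor $|x-y|^{2\beta}$, and the residual Fourier integral is controlled by a mild strengthening of \eqref{eq_est-chaos-flat} (respectively \eqref{eq_est-chaos-delta}), the small $\epsilon$ quantifying the gap to the integrability threshold of the spatial spectral measure $|\xi|^{1-2H_{*}}$.

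For the time increment, split the simplex $[0,t]_{<}^{k}$ into $A=\{s_{\sigma(k)}\leq s\}$ and $B=[0,t]_{<}^{k}\setminus A$. On $A$ only the last factor changes, and its partial spatial Fourier transform factorises as $e^{-(s-s_{\sigma(k)})\xi^{2}/2}\bigl(e^{-(t-s)\xi^{2}/2}-1\bigr)$; the elementary inequality $|e^{-u}-1|\leq u^{\alpha}$ for $u\geq 0$ and $\alpha\in[0,1]$ extracts the factor $|t-s|^{\alpha}|\xi|^{2\alpha}$, and taking $2\alpha$ just below $2H+H_{*}-1$ produces the claimed time exponent. On the thin wedge $B$ only $f_k^{t,x}$ survives; the $\kH^{\otimes k}$-mass of $f_k^{t,x}\mathbf{1}_{B}$ is estimated directly from \eqref{eq_est-chaos-flat} together with the covariance weight $\gamma(s_{i}-s_{j})=C_{H}|s_{i}-s_{j}|^{2H-2}$, and the $|t-s|$ Lebesgue thickness of $B$ conspires with the singular temporal weight to match the same order after absorbing $\epsilon$.

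The main obstacle is the uniform-in-$k$ control: after the Fourier manipulations, the remaining deterministic integrals over the simplex, weighted by the temporal covariance, must produce constants that grow at most geometrically in $k$, so that the factor $1/k!$ absorbs them and the series converges. The interpolation exponents $\alpha,\beta$ are chosen just below their critical values, which is precisely the source of the $\epsilon>0$ loss in \eqref{eq_holder}. For the delta initial condition the additional factor $t^{-H_{*}}$ in \eqref{eq_est-chaos-delta} must be carried through, as must the corresponding factor involving $s$ that appears when bounding the kernels on $B$; this is the origin of the $s\wedge t$ dependence of the implicit constant noted in the statement.
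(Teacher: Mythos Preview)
The paper does not actually prove Theorem~\ref{thm-holder}: immediately after the statement it says ``Theorems~\ref{thm_eu} and~\ref{thm-holder} are cited from \cite[Lemmas 3.2 and 3.5]{jtp-18-balan-chen}, \cite[Theorem 2.2 and the display after Formula (2.3)]{amsc-19-balan-song-quer_sardanyons}, and \cite[Theorem 4.1]{bej-24-guo-song-song}.'' So there is no in-paper proof to compare against; the result is imported from Guo--Song--Song and the earlier Balan et al.\ papers.

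Your outline is the standard chaos-expansion/Fourier-side argument and is essentially the strategy those references carry out, so at the level of ideas it is on target. Two places deserve more care before it counts as a proof. First, the wedge $B=\{s_{\sigma(k)}>s\}$: you assert that ``the $|t-s|$ Lebesgue thickness of $B$ conspires with the singular temporal weight to match the same order,'' but the $\cH^{\otimes k}$-norm is a \emph{double} integral with the kernel $\gamma(s_i-r_i)=C_H|s_i-r_i|^{2H-2}$, and the two copies of the ordered time vector need not have their maxima in $(s,t]$ simultaneously; extracting the correct power $|t-s|^{2H+H_*-1-\epsilon}$ from this cross term requires an explicit computation (or an appeal to Lemma~\ref{lmm_hls}), not just a thickness count. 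Second, the uniform-in-$k$ control: you flag it as ``the main obstacle'' but do not indicate how it is resolved. In the cited works this is done by reducing, after the Fourier interpolation, to an integral of the form $\int_{[0,t]_<^k}\prod_i(s_{i+1}-s_i)^{\theta_i}\,d\bfs_k$ with exponents $\theta_i>-1$, which \cite[Lemma 4.5]{ejp-15-hu-huang-nualart-tindel} evaluates as $C^k/\Gamma(\cdots)$; absent that step the series bound is incomplete. Finally, for the delta initial condition do not forget the zeroth-chaos increment $p_t(x)-p_s(y)$, which must also be shown to obey \eqref{eq_holder}.
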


Theorems \ref{thm_eu} and \ref{thm-holder} are cited from \cite[Lemmas 3.2 and 3.5]{jtp-18-balan-chen}, \cite[Theorem 2.2 and the display after Formula (2.3)]{amsc-19-balan-song-quer_sardanyons}, and \cite[Theorem 4.1]{bej-24-guo-song-song}. The H\"{o}lder continuity was also investigated in \cite{amsc-19-balan-song-quer_sardanyons}, though with a non-optimal exponent. In contrast, Theorem \ref{thm-holder}, cited from \cite{bej-24-guo-song-song}, provides an estimate that is `almost' optimal. A lower bound for the moments of the projection of $u(t, x) - u(s, y)$ onto the first chaos can be established as $|t - s|^{2H + H_* - 1} + |x - y|^{(4 H + 2H_* - 2)\wedge 2}$; cf. \cite[Remark 3.5]{saa-20-herrell-song-wu-xiao}\footnote{When $4H + 2H_* - 2 = 2$,  a logarithmic correction may appear, resulting in the lower bound involves a quantity like $|x - y|^2 (1 + |\log (|x - y|)|)$. }. The correction $\epsilon$ in \eqref{eq_holder} could potentially be eliminated with a more careful analysis. However, since our primary interest lies in the rate of convergence of approximate solutions, where this $\epsilon$ manifests as an approximation error, the inequality \eqref{eq_holder} is already sufficiently sharp for our purposes. Consequently, we do not pursue further optimisation of the H\"{o}lder exponent. 

\subsection{Technical lemmas}

In this subsection, we present several technical lemmas that will be utilised in the proofs of our main results in Sections \ref{sec_flat} and \ref{sec_delta}.

\begin{lemma}[Hardy-Littlewood-Sobolev inequality, cf. {\cite[Theorem 1.1]{spl-01-memin-mishura-valkeila}}] \label{lmm_hls}
Let $f \in \cH$ with $H > \frac{1}{2}$ (see Subsection \ref{ss_malliavin}). Furthermore, suppose that $f$ is supported in a compact interval $[0,T]$. Then, 
\begin{align*}
 \|f\|_{\cH} \lesssim \|f\|_{L^{1/H}}. 
\end{align*}
\end{lemma}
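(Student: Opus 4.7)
The plan is to recognize that this is a direct application of the classical Hardy--Littlewood--Sobolev inequality on $\bR$ and to verify that the exponents match. Since $H > \tfrac{1}{2}$, the inner product on $\cH$ admits the integral representation
\[
\|f\|_{\cH}^2 = C_H \int_{\bR}\int_{\bR} |t-s|^{2H-2} f(s) f(t)\, ds\, dt,
\]
where we extend $f$ by zero outside $[0,T]$. The goal is therefore to bound this double integral by $\|f\|_{L^{1/H}(\bR)}^2$.

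The main step is to invoke the classical HLS inequality: for $0<\lambda<1$ and $p,q>1$ satisfying $\tfrac{1}{p}+\tfrac{1}{q}+\lambda = 2$, one has
\[
\bigg|\int_{\bR}\int_{\bR} \frac{g(s) h(t)}{|t-s|^{\lambda}}\, ds\, dt\bigg| \lesssim \|g\|_{L^p}\, \|h\|_{L^q}.
\]
I would set $g = h = f$, $\lambda = 2 - 2H \in (0,1)$, and $p = q = 1/H$. A quick arithmetic check gives $\tfrac{1}{p}+\tfrac{1}{q}+\lambda = H + H + (2-2H) = 2$, so the hypotheses of HLS are satisfied, and the conclusion follows immediately.

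The compact support assumption plays a very mild role: it ensures that $f \in L^{1/H}([0,T]) \subset L^{1/H}(\bR)$ whenever $f$ is at least locally integrable with the right integrability, and allows one to pass from test functions on $\bR_{\geq 0}$ to the density argument underlying the completion defining $\cH$. I do not anticipate a genuine obstacle: the only sanity check is the compatibility of exponents, which works out cleanly because $H \in (1/2, 1)$ forces $1/H \in (1,2)$ and hence $p=q=1/H$ is an admissible HLS exponent. Thus the lemma reduces, after rewriting the $\cH$-norm as a Riesz-type double integral, to a single citation of the standard Hardy--Littlewood--Sobolev inequality in one dimension.
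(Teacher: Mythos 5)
Your proof is correct: the exponent arithmetic $\tfrac{1}{p}+\tfrac{1}{q}+\lambda = H + H + (2-2H) = 2$ checks out, and since $H \in (\tfrac12,1)$ guarantees $\lambda = 2-2H \in (0,1)$ and $p = q = 1/H \in (1,2)$, the classical one-dimensional Hardy--Littlewood--Sobolev inequality applies directly. The paper does not supply its own proof but simply cites Theorem~1.1 of M\'emin--Mishura--Valkeila, and that reference establishes the result by exactly this reduction to the classical HLS inequality, so your argument coincides with the intended one.
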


\begin{lemma}
 [Tail probability for normals, {cf. \cite[Formula (10)]{ams-41-gordon}}]\label{tail-normal}
 Let $X$ be a standard normal random variable, then for all $x > 0$, 
 \[
  \frac{x}{x^2 + 1} p_1(x) \leq \bP(X \geq x) \leq \frac{1}{x} p_1 (x), 
 \]
 where $p_1 (x) = \frac{1}{\sqrt{2 \pi}} e^{-\frac{x^2}{2}}$ is the heat kernel at $t = 1$. 
\end{lemma}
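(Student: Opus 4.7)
The plan is to establish the two inequalities by independent, elementary arguments. For the upper bound I would start from $\bP(X \geq x) = \int_x^{\infty} p_1(y)\, dy$ and exploit the pointwise estimate $y/x \geq 1$ on $[x, \infty)$, which yields $\int_x^{\infty} p_1(y)\, dy \leq \frac{1}{x}\int_x^{\infty} y\, p_1(y)\, dy$. Since $y p_1(y) = -p_1'(y)$, the inner integral evaluates to $p_1(x)$ by direct antidifferentiation, giving the claimed $\frac{1}{x} p_1(x)$. This half is essentially a one-liner.

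For the lower bound, the approach I would take is derivative comparison rather than a direct quadrature. Define
\[
g(x) \coloneqq \bP(X \geq x) - \frac{x}{x^2 + 1}\, p_1(x), \quad x > 0,
\]
and note that $g(x) \to 0$ as $x \to \infty$. It then suffices to show $g'(x) \leq 0$ on $(0, \infty)$, since this forces $g$ to be non-negative there. Differentiating via the quotient rule on $x/(x^2 + 1)$ together with $p_1'(x) = -x\, p_1(x)$, and combining over the common denominator $(x^2 + 1)^2$, reduces the matter to inspecting a single polynomial numerator.

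The main (if minor) obstacle is this algebraic simplification: one must verify that after expanding $(x^2 + 1)^2 = x^4 + 2x^2 + 1$ and combining it with the contribution $1 - 2x^2 - x^4$ coming from $\tfrac{d}{dx}\bigl[\tfrac{x}{x^2+1} p_1(x)\bigr]$, the quartic and quadratic terms cancel and only the constant $2$ survives, so that $g'(x) = -\tfrac{2 p_1(x)}{(x^2 + 1)^2} < 0$. This cancellation is sensitive to the precise form $x/(x^2 + 1)$, which is why a more naive route via iterated integration by parts, applying $e^{-y^2/2} = -y^{-1}(e^{-y^2/2})'$ twice to $\int_x^{\infty} e^{-y^2/2} dy$, produces only the weaker estimate $\tfrac{x^2 - 1}{x^3} p_1(x)$ and degenerates for $x \leq 1$. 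The derivative-comparison argument therefore seems genuinely necessary to capture the statement on the full range $x > 0$.
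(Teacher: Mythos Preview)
Your argument is correct on both halves: the upper bound via $y/x\ge 1$ and $yp_1(y)=-p_1'(y)$ is standard, and your derivative computation for the lower bound is accurate, yielding $g'(x)=-2p_1(x)/(x^2+1)^2<0$ with $g(x)\to 0$ at infinity, so $g\ge 0$ on $(0,\infty)$.

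The paper itself does not supply a proof of this lemma; it simply quotes the inequalities from Gordon's 1941 paper as a cited preliminary result. Your write-up therefore goes beyond what the paper does, providing a self-contained elementary verification where the paper just gives a reference. The only remark worth making is that your commentary about the ``weaker'' bound $(x^2-1)p_1(x)/x^3$ from two integrations by parts is tangential and could be dropped without loss.
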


\begin{lemma}
 [Bernstein's inequality, cf. {\cite[Page 34]{jasa-62-bennett}}]\label{tail-binom}
 Let $S_n$ be the summation of $n$ i. i. d Rademacher random variables with equal probability. Then, for any $t > 0$, 
 \[
  \bP(S_n \geq t \sqrt{n}) < \exp \Big(- \frac{t^2}{2 + \frac{2 t}{3 n}}\Big). 
 \]
\end{lemma}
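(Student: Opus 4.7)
The plan is the classical Chernoff--Cram\'er tail-bound strategy, exploiting that the moment generating function of each Rademacher summand is $\cosh(\lambda)$. First I would apply the exponential Markov inequality: for any $\lambda > 0$, by independence of the Rademacher random variables,
\[
 \bP\big(S_n \geq t \sqrt{n}\big) \leq e^{-\lambda t \sqrt{n}} \, \bE\big[e^{\lambda S_n}\big] = e^{-\lambda t \sqrt{n}} \, (\cosh \lambda)^n.
\]

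Next I would bound $\cosh \lambda$ by the Gaussian moment generating function using the elementary inequality $\cosh \lambda \leq e^{\lambda^2/2}$, which follows from term-by-term comparison of the Taylor expansions
\[
 \cosh \lambda = \sum_{k = 0}^{\infty} \frac{\lambda^{2k}}{(2k)!}, \qquad e^{\lambda^2/2} = \sum_{k = 0}^{\infty} \frac{\lambda^{2k}}{2^k \, k!},
\]
together with the fact that $(2k)! \geq 2^k k!$, with strict inequality as soon as $k \geq 2$. Plugging this in yields the strict bound $\bP(S_n \geq t \sqrt{n}) < \exp(n \lambda^2/2 - \lambda t \sqrt{n})$ for all $\lambda > 0$. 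Minimising in $\lambda$ (the optimiser is $\lambda = t/\sqrt{n}$) produces the Hoeffding-type estimate
\[
 \bP\big(S_n \geq t \sqrt{n}\big) < \exp\big(-t^2/2\big).
\]

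Finally, the bound claimed in the lemma is \emph{weaker} than this Hoeffding estimate: since $2 \leq 2 + \tfrac{2t}{3n}$ for all $t, n > 0$, one has $-t^2/2 \leq -t^2/(2 + \tfrac{2t}{3n})$, hence
\[
 \exp\big(-t^2/2\big) \leq \exp\!\Big(-\tfrac{t^2}{2 + 2t/(3n)}\Big),
\]
and combining with the previous display gives the claim. The only real subtlety is noticing that the stated inequality is already implied by the Hoeffding form; the sharper Bernstein-type control $\log \cosh \lambda \leq \lambda^2/(2 - 2\lambda/3)$ (which would be required if the denominator in the statement read $\sqrt{n}$ instead of $n$) is not needed here, so there is no genuine obstacle beyond the routine Chernoff calculation.
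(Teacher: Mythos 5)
Your proof is correct, and you have spotted the key point accurately. The paper does not actually prove this lemma; it merely cites Bennett's 1962 paper, where the genuine Bernstein-type machinery is developed, namely the sharper control $\log \cosh \lambda \leq \lambda^2/(2 - 2\lambda/3)$ that is needed when the tail variable scales linearly rather than diffusively. Your observation is right: with $n$ (not $\sqrt{n}$) in the denominator of $\tfrac{2t}{3n}$, the claimed bound is strictly weaker than the Hoeffding estimate $\bP(S_n \geq t\sqrt{n}) < e^{-t^2/2}$, so the elementary sub-Gaussian bound $\cosh\lambda \leq e^{\lambda^2/2}$ combined with Chernoff already closes the argument, including the strict inequality (from the strict gap $(2k)! > 2^k k!$ for $k \geq 2$). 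The classical Bernstein inequality for Rademacher sums actually reads $\bP(S_n \geq t\sqrt{n}) \leq \exp\bigl(-t^2/(2 + \tfrac{2t}{3\sqrt{n}})\bigr)$, with $\sqrt{n}$ in the denominator; the $n$ in the paper's statement is almost certainly a typo, but as you note this does not matter because the $n$-version is a weaker claim that Hoeffding covers. For the paper's actual application --- bounding $\bP(S_{m} > m^{\alpha})$ by $\exp(-\tfrac{1}{3} m^{2\alpha - 1})$ for some exponent $\alpha \leq 1$ --- either version of the tail bound is amply sufficient. So your shorter, more elementary route buys a self-contained proof of the stated lemma without needing the full Bennett/Bernstein apparatus the citation points to.
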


\begin{lemma}[Local limit theorem, cf. {\cite[Theorem 6, p. 197]{springer-75-petrov}}]\label{lmm_g-p}
 Let $G$ be given by \eqref{def_Gh}, and let $p$ denote the heat kernel. Then, for any integers $m$ and $ n$, such that $m > 0$ and $m + n$ is even, and real numbers $z \in [n - 2, n + 2]$ the following inequality holds
 \begin{align}\label{ieq_g-p}
  \big| G (m, n) - 2 p_m (z) \big| \lesssim m^{- 1} ,
 \end{align}
 where we recall that $p_m (z) = \frac{1}{\sqrt{2 \pi m}} e^{-\frac{z^2}{2 m}}$ denotes the heat kernel at time $t = m$.
\end{lemma}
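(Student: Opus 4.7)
The plan is to split the quantity of interest by a triangle inequality,
\[
 \bigl| G(m, n) - 2 p_m(z) \bigr| \leq \bigl| G(m, n) - 2 p_m(n) \bigr| + 2 \bigl| p_m(n) - p_m(z) \bigr|,
\]
and bound the two summands separately. The first summand is precisely the quantity controlled by the classical local limit theorem for sums of lattice-valued i.i.d.\ random variables; the second is handled by an elementary derivative estimate for the Gaussian density.

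For the first piece I would invoke the cited theorem of Petrov directly. The Rademacher increments $Z_i$ have zero mean, unit variance, finite moments of all orders, and maximal lattice span equal to $2$. Hence Petrov's local limit theorem (the version with an Edgeworth-type remainder, available because $Z_i$ has finite moments of every order) yields, uniformly over integers $n$ of the same parity as $m$,
\[
 \bigl| G(m, n) - 2 p_m(n) \bigr| \lesssim m^{-3/2},
\]
which is actually sharper than what is required. (For $|n| > m$ we have $G(m,n)=0$ by construction and $p_m(n)$ decays super-exponentially, so the bound is trivial there; only the moderate regime $|n| \lesssim \sqrt{m \log m}$ needs Petrov.)

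For the second piece I would use the explicit formula $p_m'(\xi) = -\tfrac{\xi}{m} p_m(\xi)$. Substituting $u = \xi/\sqrt{m}$ gives
\[
 |p_m'(\xi)| = \frac{|u| e^{-u^2/2}}{m\sqrt{2\pi}},
\]
and since $u \mapsto |u| e^{-u^2/2}$ is globally bounded on $\mathbb{R}$ (with maximum $e^{-1/2}$ at $|u|=1$), this is $\lesssim m^{-1}$ uniformly in $\xi$. The mean value theorem applied on the interval $[n-2, n+2]$ then produces $|p_m(n) - p_m(z)| \lesssim m^{-1}$ whenever $|z - n| \leq 2$, which combines with the previous display to give the claimed bound. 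There is no real obstacle in this argument; the only conceptual point worth stressing is that the overall rate $m^{-1}$ is dictated not by the sharpness of the local limit theorem but by the first-order Lipschitz behaviour of the Gaussian kernel, which is all one can hope for once $z$ is only pinned to $n$ up to a bounded distance.
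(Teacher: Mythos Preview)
Your argument is correct and follows essentially the same route as the paper: the paper's remark after the lemma also reduces to Petrov's local limit theorem (after rescaling the span-$2$ walk to span $1$) to control $|G(m,n)-2p_m(n)|$, and then appeals to a Taylor expansion of the Gaussian to pass from $p_m(n)$ to $p_m(z)$. Your version is slightly sharper on the first term (invoking the Edgeworth refinement and the vanishing third cumulant to get $m^{-3/2}$ instead of the paper's $m^{-1}$), and you correctly observe that the final $m^{-1}$ rate is forced by the derivative bound on $p_m$, not by the LLT.
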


\begin{remark}
	The original statement of \cite[Theorem 6, p. 197]{springer-75-petrov} assumes that the maximal span of $Z_k = S_k - S_{k - 1}$ is $1$, leading to a formulation that differs from \eqref{ieq_g-p}. In our setting, however, $Z_k \in \{- 1, 1\}$ is a Rademacher random variable, whose  the maximal span is $2$. Therefore, we rescale the random walk by a factor of $1/2$, which yields
	\[
		 \bigg| \frac{1}{2} \sqrt{m} \bP \Big(\frac{S_m}{2} = \frac{n}{2} \Big) - \frac{1}{\sqrt{2 \pi}} \exp \bigg(- \frac{1}{2} \Big(\frac{n/2}{\sqrt{m} / 2}\Big)^2\bigg) \bigg| \lesssim \frac{1}{\sqrt{m}}.
	\]
	This estimate, together with a Taylor expansion of the exponential function, implies inequality  \eqref{ieq_g-p}.
\end{remark}

The next corollary is a direct result of Lemma \ref{lmm_g-p}, the scaling property of the heat kernel, namely, for all $h > 0$ and $(t, x) \in \bR_{> 0} \times \bR$, 
\begin{align*}
 p_t (x) = h^{-\frac{1}{2}} p_{t/h} \big(x/\sqrt{h}\big). 
\end{align*}

\begin{corollary}\label{coro_g-p}
 Assume conditions in Lemma \ref{lmm_g-p}. For any $t \geq h > 0$, $x \in \bR$, recalling notation \eqref{def_t_h-x_h}, with a universal implicit constant, we deduce that
 \begin{align*} 
  \Big| \frac{1}{2\sqrt{h}}G \big(\lfloor t \rfloor_h, 2\lfloor x \rfloor_{2\sqrt{h}} \pm \tau(\lfloor t \rfloor_h) \big) - p_t (x) \Big| \lesssim \frac{\sqrt{h}}{t}, 
 \end{align*}
where $\tau$ is given by \eqref{def_tau-t}, and `$\pm$' indicates that the inequality holds with either `$+$' or `$-$'. 
\end{corollary}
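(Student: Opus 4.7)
The plan is to apply Lemma \ref{lmm_g-p} with $m = \lfloor t\rfloor_h$ and $n = 2\lfloor x\rfloor_{2\sqrt{h}} \pm \tau(\lfloor t\rfloor_h)$, then bridge between the discrete and continuous heat kernels via scaling and a Taylor expansion. First, I would verify the parity hypothesis of the lemma: since $2\lfloor x\rfloor_{2\sqrt{h}}$ is even, the parity of $m + n$ agrees with that of $m \pm \tau(m)$, which is even in both the odd and even cases of $m$ by definition of $\tau$ in \eqref{def_tau-t}. This is precisely why the correction $\pm\tau(\lfloor t\rfloor_h)$ appears in the statement. Second, I would choose $z = n$ in Lemma \ref{lmm_g-p} (so trivially $z \in [n-2, n+2]$), obtaining
\[
\big| G(m, n) - 2 p_m(n) \big| \lesssim m^{-1}.
\]

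Next, I would use the scaling identity $p_t(x) = h^{-1/2} p_{t/h}(x/\sqrt{h})$ mentioned just before the corollary; in particular $p_m(n) = \sqrt{h}\, p_{mh}(n\sqrt{h})$. Dividing the previous estimate by $2\sqrt{h}$ yields
\[
\Big| \tfrac{1}{2\sqrt{h}} G(m, n) - p_{mh}(n\sqrt{h}) \Big| \lesssim \frac{1}{\sqrt{h}\, m} = \frac{\sqrt{h}}{mh} \lesssim \frac{\sqrt{h}}{t},
\]
where in the last step I use $mh = \lfloor t/h\rfloor h \geq t/2$, which holds since $t \geq h$ forces $m \geq 1$ and hence $mh \in [t-h, t] \subseteq [t/2, t]$.

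The remaining task is to control $|p_{mh}(n\sqrt{h}) - p_t(x)|$. Here $|t - mh| \leq h$ and $|x - n\sqrt{h}| \leq (2 + \tau(m))\sqrt{h} \lesssim \sqrt{h}$. Using the standard heat-kernel bounds $\sup_x |\partial_s p_s(x)| \lesssim s^{-3/2}$ and $\sup_x |\partial_x p_s(x)| \lesssim s^{-1}$ together with the mean value theorem, and using that every time appearing in the intermediate interval is comparable to $t$ (since $mh \geq t/2$), I get
\[
\big| p_{mh}(n\sqrt{h}) - p_t(x) \big| \lesssim h \cdot t^{-3/2} + \sqrt{h} \cdot t^{-1} \lesssim \frac{\sqrt{h}}{t},
\]
where the last step uses $h \leq t$. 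Combining the two displays by the triangle inequality yields the desired bound.

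I do not anticipate a genuine obstacle here; the proof is essentially bookkeeping around Lemma \ref{lmm_g-p} and the scaling of $p$. The one point requiring mild care is the lower bound $mh \gtrsim t$, which breaks down without the standing assumption $t \geq h$, and the parity check ensuring the shift by $\pm\tau(\lfloor t\rfloor_h)$ lands $n$ inside the support of the law of $S_m$.
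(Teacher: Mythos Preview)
Your proposal is correct and follows essentially the same approach as the paper. The paper only sketches the proof by pointing to Lemma~\ref{lmm_g-p} and the scaling identity $p_t(x)=h^{-1/2}p_{t/h}(x/\sqrt{h})$; your argument is a faithful and complete execution of that sketch, including the parity check, the bound $mh\ge t/2$, and the Taylor step in $(t,x)$ needed to pass from $p_{mh}(n\sqrt{h})$ to $p_t(x)$.
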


\begin{lemma}\label{lmm_g-h1}
   Let $G$ be defined in \eqref{def_Gh}. Then, for any $h > 0$,  $t > 0$, and $x \in \bR$, 
\begin{align*}
  \Big\| \frac{1}{2\sqrt{h}} G \big(\lfloor t \rfloor_{h},  2 (\lfloor x \rfloor_{2 \sqrt{h}} - \lfloor \cdot \rfloor_{2 \sqrt{h}}) \pm \tau(\lfloor t \rfloor_h) \big) \Big\|_{\cH_*}^2 \lesssim t^{H_* - 1}, 
\end{align*}
where $\tau$ is defined in \eqref{def_tau-t},  `$\pm$' indicates that the inequality holds with either `$+$' or `$-$', and the implicit constant depends only on $H_*$. 
\end{lemma}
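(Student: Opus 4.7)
The plan is to reduce the $\cH_*$-norm to an $L^{1/H_*}$-norm via the spatial analogue of the Hardy-Littlewood-Sobolev inequality (Lemma \ref{lmm_hls}), write the latter as a sum against $G$, and exploit $\sum_n G(\lfloor t \rfloor_h, n) = 1$ together with the peak bound from the local limit theorem.

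Denote $m := \lfloor t\rfloor_h$ and write the target function as $f(y) = \sum_k a_k \1_{J_k}(y)$ on the dyadic intervals $J_k := [2k\sqrt{h}, 2(k+1)\sqrt{h})$, where
\[
a_k = \tfrac{1}{2\sqrt{h}}\, G\big(m,\, 2(\lfloor x\rfloor_{2\sqrt{h}} - k) \pm \tau(m)\big).
\]
Since $G(m, \cdot)$ is supported in $\{-m, \ldots, m\}$, $f$ has compact support; a translation brings its support into some $[0, T]$, so Lemma \ref{lmm_hls} (applied to $\cH_*$, noting that $\gamma_*$ is translation invariant) yields $\|f\|_{\cH_*} \lesssim \|f\|_{L^{1/H_*}}$ when $H_* > \tfrac{1}{2}$, while for $H_* = \tfrac{1}{2}$ we have $\cH_* = L^2(\bR)$ and obtain the analogous identity directly. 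The step-function structure then gives
\[
 \|f\|_{L^{1/H_*}}^{1/H_*} = (2\sqrt{h})^{1 - 1/H_*} \sum_n G(m, n)^{1/H_*},
\]
and since $1/H_* - 1 \geq 0$, the factorisation $G^{1/H_*} = G \cdot G^{1/H_* - 1}$ combined with $\sum_n G(m,n) = 1$ bounds this sum by $(\max_n G(m, n))^{1/H_* - 1}$.

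For $m \geq 1$ the classical peak estimate $\max_n G(m, n) \lesssim m^{-1/2}$ (the peak case of Lemma \ref{lmm_g-p}, or equivalently the standard binomial coefficient bound) combined with $m \geq t/(2h)$ when $t \geq h$ gives $\max_n G(m, n) \lesssim \sqrt{h/t}$. Substituting, the powers of $h$ cancel exactly, leaving $\|f\|_{L^{1/H_*}} \lesssim t^{(H_*-1)/2}$ and thus $\|f\|_{\cH_*}^2 \lesssim t^{H_*-1}$. The degenerate range $t < h$ (where $m = 0$ and $\tau(0) = 0$) is handled by direct computation: $f$ equals $(2\sqrt{h})^{-1}$ times the indicator of a single interval $J_{\lfloor x \rfloor_{2\sqrt{h}}}$, so a short calculation using $\|\1_{J_k}\|_{\cH_*}^2 = (2\sqrt h)^{2H_*} \Gamma_*(0)$ yields $\|f\|_{\cH_*}^2 \lesssim h^{H_* - 1} \leq t^{H_*-1}$ because $H_* - 1 \leq 0$.

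The only mild wrinkle, which I do not expect to be a real obstacle, is the bookkeeping around the floor operations and the $\pm \tau(m)$ shift. These merely permute and translate the arguments of $G$, while the estimates above rely solely on the total-mass and peak-decay properties of $G(m, \cdot)$, both of which are invariant under such shifts. Consequently the bound is uniform in $x$ and in the sign choice, and no finer properties of the underlying random walk enter the proof.
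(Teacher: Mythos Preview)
Your proof is correct and takes a genuinely different route from the paper. The paper handles the case $t\geq h$ by invoking Corollary~\ref{coro_g-p} to replace $\tfrac{1}{2\sqrt h}G$ by the heat kernel $p_t$, incurring an error of order $\sqrt{h}/t$ supported on $[-3t/\sqrt h,\,3t/\sqrt h]$; it then bounds the $\cH_*$-norm of the error term directly and applies Lemma~\ref{lmm_inn_heat} to the heat-kernel part. Your argument bypasses this comparison entirely: you use the spatial Hardy--Littlewood--Sobolev inequality to reduce to an $L^{1/H_*}$ norm, exploit the step-function structure to rewrite that norm as $(2\sqrt h)^{1-1/H_*}\sum_n G(m,n)^{1/H_*}$, and then only need the total mass $\sum_n G(m,n)=1$ together with the elementary peak bound $\max_n G(m,n)\lesssim m^{-1/2}$. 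This is cleaner and more self-contained, since it avoids both the local limit theorem in its quantitative form and the heat-kernel norm computation; on the other hand, the paper's approach has the minor advantage of reusing Corollary~\ref{coro_g-p} and Lemma~\ref{lmm_inn_heat}, which are needed elsewhere anyway, so no new ingredients are introduced. Both treatments of the degenerate case $t<h$ are essentially identical.
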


\begin{proof}
Here we only provide the deduction assuming $H_* > \frac{1}{2}$, while the case when $H_* = \frac{1}{2}$ essentially follow the same idea. 
First, if $t < h$, then $\lfloor t \rfloor_{h} = 0$, and thus 
\begin{align*} 
 G \big(\lfloor t \rfloor_{h},  2 (\lfloor x \rfloor_{2 \sqrt{h}} - \lfloor \cdot \rfloor_{2 \sqrt{h}}) \pm \tau(\lfloor t \rfloor_h) \big) = & G \big(\lfloor t \rfloor_{h},  2 (\lfloor x \rfloor_{2 \sqrt{h}} - \lfloor  y \rfloor_{2 \sqrt{h}}) \big) \\ 
 = & \begin{dcases}
 1, & y \in \big[2\sqrt{h} \lfloor x \rfloor_{2 \sqrt{h}} \, , \, 2\sqrt{h} (\lfloor x \rfloor_{2 \sqrt{h}} + 1) \big), \\
0, & \text{otherwise}. 
 \end{dcases}
\end{align*}
It follows that 
\begin{align*}
 \Big\| \frac{1}{2\sqrt{h}} G \big(\lfloor t \rfloor_{h},  2 (\lfloor x \rfloor_{2 \sqrt{h}} - \lfloor \cdot \rfloor_{2 \sqrt{h}}) \big) \Big\|_{\cH_*}^2 = (4h)^{-1} \int_{0}^{2\sqrt{h}} dy \int_{0}^{2\sqrt{h}} dz |y - z|^{2H_* - 2} \lesssim h^{H_* - 1} \lesssim t^{H_* - 1}. 
\end{align*}
because $t < h$ and $H_* < 1$. 

Next, suppose $t \geq h$. Without loss of generality, assume $t/h$ is a positive even integer; the other cases follow similarly. 
It follows from Corollary \ref{coro_g-p}, Lemma \ref{lmm_inn_heat} below, and the fact that $G (m, \cdot)$ is supported on $\{0, \pm 1, \dots, \pm m\}$,  that
\begin{align*}
 \Big\| \frac{1}{2\sqrt{h}} G \big(\lfloor t \rfloor_{h},  2 (\lfloor x \rfloor_{2 \sqrt{h}} - \lfloor \cdot \rfloor_{2 \sqrt{h}})\big) \Big\|_{\cH_*}^2 \lesssim & h t^{-2} \int_{- \frac{3 t}{ \sqrt{h}} }^{\frac{3 t}{ \sqrt{h}}} \int_{- \frac{3 t}{ \sqrt{h}}}^{\frac{3 t}{ \sqrt{h}}} |y - z|^{2H_* - 2} d y dz + \|p_{t}(x - \cdot)\|_{\cH_*}^2\\
 \lesssim & h^{1 - H_*} t^{2 H_* - 2} + t^{H_* - 1} \lesssim t^{H_* - 1}, 
\end{align*}
because $t \geq h$ and $H_* < 1$. The proof of this lemma is complete. 
\end{proof}

This section concludes with the next lemmas on the heat kernel. 
\begin{lemma}\label{lmm_inn_heat}
 For any $t > 0$ and $x \in \bR$, it holds that
 \[
  \|p_{t}(x - \cdot)\|_{\cH_*}^2 = \|p_{t}\|_{\cH_*}^2 \lesssim t^{H_* - 1}. 
 \]
\end{lemma}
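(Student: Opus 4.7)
The first equality $\|p_t(x-\cdot)\|_{\cH_*}^2 = \|p_t\|_{\cH_*}^2$ is immediate: the inner product on $\cH_*$ is built from the translation-invariant kernel $\gamma_*$, so the change of variables $y\mapsto y+x$, $z\mapsto z+x$ in the defining double integral eliminates the shift. It therefore suffices to establish the upper bound $\|p_t\|_{\cH_*}^2 \lesssim t^{H_*-1}$.

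My strategy is to reduce to the case $t=1$ via the self-similarity of the Gaussian heat kernel $p_t(y) = t^{-1/2} p_1(y/\sqrt{t})$, combined with the homogeneity of $\gamma_*$. In the white-in-space case $H_*=\tfrac{1}{2}$, one has $\cH_* = L^2(\bR)$, and the semigroup identity gives
\[
\|p_t\|_{\cH_*}^2 = \int_\bR p_t(y)^2\, dy = p_{2t}(0) = (4\pi t)^{-1/2},
\]
which matches the claimed order $t^{H_*-1} = t^{-1/2}$. For $H_* > \tfrac{1}{2}$, I would substitute $y = \sqrt{t}\,u$, $z = \sqrt{t}\,v$ in
\[
\|p_t\|_{\cH_*}^2 = C_{H_*} \int_{\bR^2} |y-z|^{2H_*-2} p_t(y) p_t(z)\, dy\, dz.
\]
Collecting the factor $t^{H_*-1}$ from $|y-z|^{2H_*-2}$, the factor $t^{-1}$ from $p_t(y)p_t(z)$, and the Jacobian $t$, everything telescopes to the scaling identity $\|p_t\|_{\cH_*}^2 = t^{H_*-1} \|p_1\|_{\cH_*}^2$.

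What remains is to show $\|p_1\|_{\cH_*}^2 < \infty$ when $H_* > \tfrac{1}{2}$, which then absorbs into the implicit constant. The cleanest route is Lemma \ref{lmm_hls}, applied in its obvious $\cH_*$-analogue, yielding $\|p_1\|_{\cH_*} \lesssim \|p_1\|_{L^{1/H_*}(\bR)} < \infty$, since $p_1$ is a Schwartz function. The only minor subtlety is that Lemma \ref{lmm_hls} as stated requires compact support; to avoid that technicality I would alternatively argue by splitting the double integral on $\{|u-v|\leq 1\}$, where $|u-v|^{2H_*-2}$ is integrable because $2H_*-2 > -1$, and on $\{|u-v| > 1\}$, where $|u-v|^{2H_*-2} \leq 1$ and the remaining integral is bounded by $\|p_1\|_{L^1(\bR)}^2 = 1$. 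This splitting is the main (and very mild) obstacle in an otherwise routine scaling argument.
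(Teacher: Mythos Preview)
Your proof is correct. The paper itself does not give a detailed argument here; it only remarks that the lemma is elementary and points to the Fourier representation of the inner product on $\cH_*$ as one route. Via Fourier, one writes $\|p_t\|_{\cH_*}^2 = c\int_\bR |\xi|^{1-2H_*} e^{-t\xi^2}\,d\xi$ and substitutes $\xi = \eta/\sqrt{t}$ to extract the factor $t^{H_*-1}$; finiteness of the remaining integral is immediate since $1-2H_* > -1$. Your approach stays in real space and uses the self-similarity of $p_t$ together with the homogeneity of $\gamma_*$ to obtain the exact scaling identity $\|p_t\|_{\cH_*}^2 = t^{H_*-1}\|p_1\|_{\cH_*}^2$, and then checks finiteness at $t=1$ by the near/far splitting of $|u-v|$. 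Both arguments are equally short and elementary; the Fourier route packages the scaling and the finiteness into a single change of variables, while yours has the minor advantage of not invoking the Fourier identity for the Riesz kernel.
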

\begin{proof}
The proof of this lemma is elementary and can be established by e. g., using the Fourier representation of the inner product in $\cH_*$. We omit the details for brevity. 
\end{proof}

\begin{lemma}[{\cite[Lemma A. 4]{ap-15-chen-dalang}}] \label{lmm_heat-fac}
 For all $s, t > 0$ and $x, y \in \bR$, the next formula holds 
 \[
  p_s(x - y) p_t(y) = p_{\frac{st}{s + t}} \Big(\frac{ t}{s + t}x - y\Big) p_{s + t} (x). 
 \]
\end{lemma}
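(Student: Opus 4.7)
The plan is to verify the identity by direct algebraic comparison, since both sides are explicit products of Gaussian densities in the variables $x,y$. Using the formula $p_t(x) = (2\pi t)^{-1/2} e^{-x^2/(2t)}$, I first match the Gaussian prefactors: the left side contributes $(2\pi\sqrt{st})^{-1}$, while the right side contributes $\bigl(2\pi \sqrt{st/(s+t)}\,\sqrt{s+t}\bigr)^{-1} = (2\pi\sqrt{st})^{-1}$. These agree immediately, so the entire content of the lemma reduces to matching the two quadratic forms appearing in the exponents.

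For the exponents, I would expand the left-hand side as
\[
-\frac{1}{2s}(x-y)^2 - \frac{1}{2t}y^2 = -\frac{1}{2st}\bigl[\,tx^2 - 2txy + (s+t)y^2\,\bigr],
\]
and expand the right-hand side as
\[
-\frac{s+t}{2st}\Bigl(\frac{t}{s+t}x - y\Bigr)^2 - \frac{x^2}{2(s+t)}.
\]
Squaring and collecting, one reads off the $y^2$ coefficient as $-(s+t)/(2st)$ and the $xy$ coefficient as $1/s = t/(st)$, both of which already match the left-hand side. The two $x^2$ contributions combine through the elementary identity $\frac{t}{s(s+t)} + \frac{1}{s+t} = \frac{1}{s}$, producing $-x^2/(2s) = -tx^2/(2st)$. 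Hence the right-hand exponent equals $-\frac{1}{2st}[tx^2 - 2txy + (s+t)y^2]$, matching the left side exactly.

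No obstacle is expected; the verification is purely algebraic, and the only care required is bookkeeping with the $x^2$ coefficients. As a conceptual check, the identity is the Bayes-rule decomposition for Brownian motion: the left side is the joint density at $(y,x)$ of $(B_t, B_{s+t})$ for a standard Brownian motion $B$, while the right side factors it as the marginal density $p_{s+t}(x)$ of $B_{s+t}$ times the conditional (Brownian bridge) density of $B_t$ given $B_{s+t} = x$, which is $N\!\bigl(\frac{t}{s+t}x,\,\frac{st}{s+t}\bigr)$. This interpretation explains both the mean $\frac{t}{s+t}x$ and variance $\frac{st}{s+t}$ appearing on the right and provides a structural sanity check on the algebra.
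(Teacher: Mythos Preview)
Your proof is correct. The paper does not supply its own proof of this lemma but cites it from \cite[Lemma A.4]{ap-15-chen-dalang}, so there is no argument in the paper to compare against; your direct algebraic verification (together with the Brownian-bridge interpretation as a sanity check) is exactly the standard way to establish this identity.
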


\section{\eqref{pam} with flat initial condition} \label{sec_flat}
In this section, we establish the rate of convergence in $L^2(\Omega)$ for the discrete Feynman-Kac approximation to \eqref{pam} with a flat initial condition. To facilitate clarity, we divide the analysis into two subsections. Subsection \ref{sec_white} focuses on the case of space-time white noise, while other cases are handled in Subsection \ref{sec_colour}. Although the proofs for both cases share a similar approach, we separate them to highlight the core concepts in the simpler space-time white noise case, without the additional technical complexities appearing with coloured noises. Moreover, the space-time white noise case serves as the most classical and significant example of \eqref{pam}, meriting a detailed and thorough treatment. 
  \subsection{White noise}\label{sec_white}
  In this subsection, we establish the convergence rate in $L^p (\Omega)$ for the approximation to \eqref{pam} driven by space-time white noise as in the next proposition. 

  \begin{proposition}\label{thm_rate-l2}
   Let $u$ be the solution to \eqref{pam} driven by space-time white noise, and let $u_h = \{u_h(m, n) \coloneqq (m, n) \in \bZ_{> 0}\times \bZ\}$ be defined in \eqref{fk_h_X} with some $h \in (0, 1)$. Then, for any $p \geq 2$, $(t, x) \in \bR_{\geq 0} \times \bR$, and $\epsilon \in (0, \frac{1}{2})$, 
   \begin{align*}
    \big\|u_h ( \lfloor t \rfloor_h ,  \lfloor x \rfloor_{2 \sqrt{h}}) - u(t, x) \big\|_{L^p(\Omega)}^2 \lesssim h^{\frac{1}{2} - \epsilon}, 
   \end{align*}
   where the implicit constant depends on $t$ and $\epsilon$. 
  \end{proposition}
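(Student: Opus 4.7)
The strategy is to compare chaos expansions of $u_h(\lfloor t\rfloor_h,\lfloor x\rfloor_{2\sqrt{h}})$ and $u(t,x)$, reduce the $L^p(\Omega)$ error to an $L^2$-type estimate on their kernels via hypercontractivity, and then control the latter using the local limit theorem (Corollary \ref{coro_g-p}). First, I would expand the Wick exponential in \eqref{fk_h_X} using \eqref{def_wick-herm}; Fubini then gives
\[
u_h(m,n) = \sum_{k\geq 0}\frac{1}{k!}I_k(g_k^h),\qquad g_k^h(z_1,\ldots,z_k) = \bE^S\Big[\prod_{j=1}^k \Phi^{S,m,n,h}(z_j)\Big],
\]
where $\Phi^{S,m,n,h}(s,y) = \frac{1}{2\sqrt{h}}\sum_{i=1}^m \cT_h^{(i)}(s)\,\cS_h^{(\lfloor S_{m+1-i}/2\rfloor + n)}(y)$. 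By the Markov property of the simple random walk, on each ordered time-space cell the kernel $g_k^h$ factorises as a product of $k$ rescaled transition probabilities $\frac{1}{2\sqrt{h}}G(\ks_i,\ky_i)$ indexed as in \eqref{def_ks}-\eqref{def_ky}; the parity corrector $\tau$ keeps $\ky_i$ in the support of $S_{\ks_i}$. This is the discrete analogue of the product structure \eqref{def_fk} of $f_k$.

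Subtracting the chaos expansion of $u(t,x)$ from Theorem \ref{thm_eu}, chaos orthogonality together with Gaussian hypercontractivity $\|I_k(F)\|_{L^p(\Omega)}\leq(p-1)^{k/2}\|F\|_{L^2}$ reduces the problem to establishing
\[
\sum_{k\geq 1}\frac{(p-1)^k}{k!}\|g_k^h-f_k\|_{L^2([0,t]^k\times\bR^k)}^{2} \lesssim h^{1/2-\epsilon}.
\]
On the ordered simplex $\{0<s_1<\cdots<s_k<t\}$ I telescope
\[
g_k^h - f_k = \sum_{j=1}^{k}\Big(\prod_{i<j}\tfrac{G_i}{2\sqrt{h}}\Big)\Big[\tfrac{G_j}{2\sqrt{h}}-p_j\Big]\Big(\prod_{i>j}p_i\Big),
\]
where $p_i = p_{\tau_i}(y_{i+1}-y_i)$, $\tau_i = s_{i+1}-s_i$, $(s_{k+1},y_{k+1})=(t,x)$. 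Corollary \ref{coro_g-p} bounds the middle factor pointwise by $\sqrt{h}/\tau_j$, while the Gaussian-type tails of both densities (Lemmas \ref{tail-normal} and \ref{tail-binom}) restrict the effective spatial integration to a box of radius $\sqrt{\tau_j\log(1/h)}$. After the change of variables $z_i = y_{i+1}-y_i$ the factors decouple, producing an $L^2(\bR^k)$-norm of order $h\log^{1/2}(1/h)\,\tau_j^{-3/2}\prod_{i\neq j}\tau_i^{-1/2}$ in the bulk.

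The main obstacle is that the pointwise bound $\sqrt{h}/\tau_j$ in Corollary \ref{coro_g-p} degenerates as $\tau_j\downarrow 0$, so the resulting time integral over the simplex diverges near the diagonal. I would handle this with a two-regime split at threshold $\tau_\ast = h\log^{1/2}(1/h)$: on the bulk $\{\min_i \tau_i\geq \tau_\ast\}$, a Dirichlet/Beta computation in $\bfs_k$ yields a contribution of order $h\log^{1/2}(1/h)/\tau_\ast^{1/2}\cdot C^k/\Gamma(k/2+1)$, whose sum in $k$ converges; on the near-diagonal complement, discarding Corollary \ref{coro_g-p} in favour of the a priori bounds $\|f_k\|^2+\|g_k^h\|^2 \lesssim C^k t^{k/2}/\Gamma(k/2+1)$ from \eqref{eq_est-chaos-flat} and Lemma \ref{lmm_g-h1}, the simplex measure of the complement contributes a factor $\tau_\ast^{1/2}$. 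Both regimes thus contribute of order $\sqrt{\tau_\ast} = h^{1/2}\log^{1/4}(1/h)$, which after absorbing the logarithmic factor into $\epsilon$ delivers the claimed rate $h^{1/2-\epsilon}$. The $L^p$ statement follows by inserting the hypercontractive constant $(p-1)^{k/2}$ into the series, whose convergence is unaffected.
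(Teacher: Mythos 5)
Your proposal follows the same overall architecture as the paper's proof: chaos expansion of $u_h$ (the paper's Lemma \ref{lmm-chaos-h}), reduction to $p=2$ via hypercontractivity, a telescoping/inductive comparison of the kernels $g_{h,k}$ and $f_k$ factor by factor, and control of the single-factor discrepancy through the local limit theorem (Corollary \ref{coro_g-p}). The one place where you diverge is in how the degeneration of the local-CLT bound $\sqrt{h}/\tau_j$ near the diagonal is resolved. You cut the simplex $[0,t]_<^k$ at a log-corrected threshold $\tau_\ast = h\log^{1/2}(1/h)$, using the local CLT on the bulk and a priori $L^2$ bounds on the near-diagonal slab, and absorb the resulting logarithms into $\epsilon$. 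The paper instead bakes the fix into a per-coordinate estimate (Lemma \ref{lmm-l2-g-p}): it shows that
\[
\int_{\bR}\Big|\tfrac{1}{2\sqrt{h}} G\big(\lfloor \tau\rfloor_h, \cdot\big) - p_\tau\Big|^2\,dy \;\lesssim\; h^{1/2-\epsilon}\,\tau^{-1+\epsilon}
\]
uniformly in $\tau>0$, splitting only into $\tau<h$ (where the trivial bound $\tau^{-1/2}\leq h^{1/2-\epsilon}\tau^{-1+\epsilon}$ already gives the right order) and $\tau\geq h$ (where it uses the full support $|y-x|\lesssim \tau/\sqrt{h}$ of $G$ plus the Gaussian tail of $p_{2\tau}$, rather than a log-sized window). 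Since $\tau^{-1+\epsilon}$ is integrable on $[0,t]$, the $\bfs_k$-integral needs no further cut-off and the log corrections never appear. Both routes deliver $h^{1/2-\epsilon}$, but the paper's $\epsilon$-trick keeps the simplex integration intact and avoids the auxiliary threshold entirely; your version is somewhat lossier in logarithmic factors but reaches the same statement once they are absorbed. One small caveat: your near-diagonal estimate should be stated as contributing a factor $\sqrt{\tau_\ast/t}$ (not $\sqrt{\tau_\ast}$) relative to the whole-simplex a priori bound, but this does not affect the conclusion.
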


Proposition \ref{thm_rate-l2} relies on Lemmas \ref{lmm-chaos-h}--\ref{prop_l2-f-g} below, with proofs deferred to the end of this subsection. 

 \begin{lemma}\label{lmm-chaos-h}
  Let $u_h = \{u_h (m, n) \colon (m, n) \in \bZ_{> 0} \times \bZ\}$ be given by \eqref{fk_h_X}, and let $g_{h, k} \colon [0, mh]^k \times \bR^k \to \bR$ be given by
  \begin{align}\label{def_gk}
   g_{h, k} (\bfs_k, \bfy_k)  =  g_{h, k}^{t,x} (\bfs_k, \bfy_k)  = & 2^{-k} h^{- \frac{k}{2}} \prod_{j = 1}^k G\big(\ks_j, \ky_j \big), 
  \end{align}
  where $G$,  $(\ks_1, \dots, \ks_k)$, and $(\ky_1, \dots, \ky_k)$ are defined in \eqref{def_Gh}, \eqref{def_ks} and \eqref{def_ky}, respectively, with the parameters $(t, x) \in \bR_{> 0} \times \bR$, such that $\lfloor t \rfloor_h = m$ and $\lfloor x \rfloor_{2 \sqrt{h}} = n$. Then, $u_h$ can be represented as the following chaos expansion, 
  \begin{align}\label{chaos-uh}
  u_h (m, n) = & 1 + \sum_{k = 1}^{\infty} \frac{1}{k!} I_k(g_{h, k}). 
  \end{align}  
 \end{lemma}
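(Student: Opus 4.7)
The plan is to unfold \eqref{fk_h_X} by first conditioning on the random walk $S$, applying the Hermite--chaos identity to obtain a conditional chaos expansion of $\wick(Z_S)$, and then swapping the expectation $\bE^S$ with the multiple integrals via a stochastic Fubini argument; identifying the resulting deterministic kernels with $g_{h,k}$ then reduces to a combinatorial calculation. For every realisation of $S$, the Gaussian functional $Z_S \coloneqq \sum_{i=1}^{m} W_h(i, \lfloor S_{m+1-i}/2\rfloor + n)$ can be written as $Z_S = W(\Phi_S)$ with the conditionally deterministic kernel
\[
 \Phi_S(s, y) \coloneqq \frac{1}{2\sqrt{h}} \sum_{i=1}^{m} \cT_h^{(i)}(s)\, \cS_h^{(\lfloor S_{m+1-i}/2\rfloor + n)}(y) \in \kH.
\]
Combining the isometry $t^n H_n(Z/t) = I_n(\Phi^{\otimes n})$ for $Z = W(\Phi)$ with $t = \|\Phi\|_{\kH}$ (Subsection \ref{ss_malliavin}) with the series \eqref{def_wick-herm} yields the conditional chaos decomposition
\[
 \wick(Z_S) = 1 + \sum_{k=1}^{\infty} \frac{1}{k!}\, I_k\big(\Phi_S^{\otimes k}\big).
\]

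Since $\|\Phi_S\|_{\kH}$ admits a deterministic upper bound depending only on $m$ and $h$, the conditional second moment $\bE[\wick(Z_S)^2 \mid S] = \exp(\|\Phi_S\|_{\kH}^2)$ is bounded uniformly in the realisation of $S$; this, together with the independence of $S$ and $W$, justifies moving $\bE^S$ inside the series and inside each $I_k$, giving $u_h(m,n) = 1 + \sum_{k=1}^{\infty} \frac{1}{k!} I_k(\bE^S[\Phi_S^{\otimes k}])$. It remains to verify the pointwise identity $\bE^S[\Phi_S^{\otimes k}] = g_{h,k}$. Unravelling the tensor product, each indicator $\cT_h^{(i_j)}(s_j)$ singles out the unique index $i_j = \lfloor s_j\rfloor_h + 1$, and writing $q_j \coloneqq m - \lfloor s_j\rfloor_h$ one obtains
\[
 \Phi_S^{\otimes k}(\bfs_k, \bfy_k) = (2\sqrt{h})^{-k}\prod_{j=1}^{k} \mathbf{1}\big\{\lfloor S_{q_j}/2 \rfloor = \lfloor y_j\rfloor_{2\sqrt{h}} - n\big\}.
\]
After sorting the $s_j$'s via $\sigma$, the times $q_{\sigma(j)}$ are strictly decreasing with $q_{\sigma(k+1)} = 0$; because $S_\ell$ has the same parity as $\ell$, the event $\lfloor S_{q_{\sigma(j)}}/2\rfloor = \lfloor y_{\sigma(j)}\rfloor_{2\sqrt{h}} - n$ uniquely determines $S_{q_{\sigma(j)}} = 2(\lfloor y_{\sigma(j)}\rfloor_{2\sqrt{h}} - n) + \tau(q_{\sigma(j)})$. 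Applying the Markov property to the forward walk $S_0 = 0$, which traverses the constrained times in the order $q_{\sigma(k)} < \cdots < q_{\sigma(1)}$, factorises the joint probability into $k$ one-step transition probabilities whose first arguments are exactly $\ks_j$ from \eqref{def_ks} and whose second arguments coincide with $\ky_j$ from \eqref{def_ky}, once one invokes the symmetry $G(\ell, \cdot) = G(\ell, -\cdot)$.

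The only delicate point of the argument is the parity book-keeping: since the event $\lfloor S/2\rfloor = b$ does not fix the parity of $S$, one has to recover $S$ from $b$ using the parity of the time index, which produces precisely the $\tau$-corrections in \eqref{def_ky}, and upon taking spatial differences one must carefully line up the parities $\tau(q_{\sigma(j+1)}) - \tau(q_{\sigma(j)})$ with those corrections. A secondary bookkeeping observation is that the initial interval $[0, s_{\sigma(1)}]$ in forward time imposes no constraint on the walk, explaining why the product in \eqref{def_gk} involves only $\ks_1,\dots,\ks_k$ and omits $\ks_0$. Once this combinatorial identification is in place, $\bE^S[\Phi_S^{\otimes k}] = g_{h,k}$ holds and \eqref{chaos-uh} follows; the stochastic Fubini step is routine given the uniform $L^2$-bound on $\wick(Z_S)$.
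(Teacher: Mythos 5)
Your proposal is correct and follows essentially the same route as the paper's proof: condition on $S$, apply the Hermite--chaos identity $H_k(W(\Phi)/\|\Phi\|) \cdot \|\Phi\|^k = I_k(\Phi^{\otimes k})$ together with the series \eqref{def_wick-herm}, pull $\bE^S$ inside $I_k$, and identify $\bE^S[\Phi_S^{\otimes k}]$ with $g_{h,k}$ by unwinding the indicator functions, recovering the walk positions from the parity of the time indices (which produces exactly the $\tau$-corrections in \eqref{def_ky}), and factorising via the Markov property and the symmetry of $G$. Your added remark that the conditional second moment $\exp(\|\Phi_S\|_{\kH}^2)$ is bounded uniformly in the realisation of $S$ (for fixed $m$, $h$) supplies the justification for interchanging $\bE^S$ with the chaos sum and with $I_k$, a step the paper carries out without explicit comment.
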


\begin{lemma}\label{lmm-l2-g-p}
 Fix $h > 0$. Let $p$ denote the heat kernel, and let $G$ be defined in \eqref{def_Gh}. Then, for any $(t, x) \in \bR_{> 0} \times\bR$, the following inequality hold with any $\epsilon \in (0, \frac{1}{2})$, 
  \begin{align}\label{eq_l2-g-p}
   \int_{\bR} \Big|\frac{1}{2\sqrt{h}} G \big(\lfloor t \rfloor_h, 2 (\lfloor x \rfloor_{2 \sqrt{h}} - \lfloor y \rfloor_{2 \sqrt{h}}) \pm \tau(\lfloor t \rfloor_h) \big) - p_{t} (x -y)\Big|^2 d y \lesssim h^{\frac{1}{2} - \epsilon} t^{-1 + \epsilon} , 
  \end{align}
where $\tau$ is given by \eqref{def_tau-t}, and `$\pm$' indicates the inequality holds with either `$+$' or `$-$'. 
 \end{lemma}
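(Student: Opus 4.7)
The plan is to split the analysis into two regimes and handle each with a short, elementary argument. Let $F(y)$ denote the expression inside the absolute value in \eqref{eq_l2-g-p}.

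\textbf{Regime 1: $t \geq h$.} Corollary~\ref{coro_g-p} (applied with $x$ replaced by $x-y$, with the minor observation that switching between $2\lfloor x-y\rfloor_{2\sqrt{h}}\pm\tau(\lfloor t\rfloor_h)$ and $2(\lfloor x\rfloor_{2\sqrt{h}}-\lfloor y\rfloor_{2\sqrt{h}})\pm\tau(\lfloor t\rfloor_h)$ costs at most $O(\sqrt{h}/t)$ via the uniform bound $|p'_s(\cdot)|\lesssim 1/s$ for $s \asymp t$) yields the pointwise estimate
\[
|F(y)| \lesssim \frac{\sqrt{h}}{t}, \qquad y \in \bR.
\]
Moreover, both $\frac{1}{2\sqrt{h}} G(\lfloor t \rfloor_h, 2(\lfloor x\rfloor_{2\sqrt{h}} - \lfloor \cdot \rfloor_{2\sqrt{h}}) \pm \tau(\lfloor t \rfloor_h))$ and $p_t(x - \cdot)$ are probability densities on $\bR$: the former by partitioning $\bR$ into cells $[2k\sqrt{h}, 2(k+1)\sqrt{h})$ and summing $G(\lfloor t\rfloor_h, n)$ over integers $n$ of the correct parity. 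The triangle inequality then gives $\|F\|_{L^1(\bR)} \leq 2$, and interpolation yields
\[
\int_{\bR} |F(y)|^2\, dy \leq \|F\|_{L^\infty}\|F\|_{L^1} \lesssim \frac{\sqrt{h}}{t}.
\]
Since $h \leq t$, we have $(h/t)^\epsilon \leq 1$, hence $\sqrt{h}/t = (h/t)^\epsilon \cdot h^{1/2-\epsilon} t^{-1+\epsilon} \leq h^{1/2-\epsilon} t^{-1+\epsilon}$, as required.

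\textbf{Regime 2: $t < h$.} Here $\lfloor t \rfloor_h = 0$ and $\tau(0) = 0$, so the discrete term collapses to $\frac{1}{2\sqrt{h}}\mathbf{1}_{\{\lfloor y\rfloor_{2\sqrt{h}} = \lfloor x\rfloor_{2\sqrt{h}}\}}$, whose $L^2(\bR)$-squared norm equals $(2\sqrt{h})^{-1}$. Also, $\|p_t(x-\cdot)\|_{L^2(\bR)}^2 = p_{2t}(0) \lesssim t^{-1/2}$. Applying the elementary inequality $(a-b)^2 \leq 2(a^2+b^2)$ gives
\[
\int_{\bR} |F(y)|^2\, dy \lesssim h^{-1/2} + t^{-1/2} \lesssim t^{-1/2},
\]
where the last step uses $t < h$. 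Since $1/2-\epsilon > 0$ and $t \leq h$, we conclude $t^{-1/2} = t^{1/2-\epsilon} \cdot t^{-1+\epsilon} \leq h^{1/2-\epsilon} t^{-1+\epsilon}$.

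I do not anticipate substantial obstacles: the two key inputs (Corollary~\ref{coro_g-p} and the normalisation of probability densities) are already at hand. Regime~1 in fact produces the stronger bound $\sqrt{h}/t$; the $\epsilon$ correction is needed only in Regime~2 to absorb the scale mismatch between $t^{-1/2}$ and the target exponent $t^{-1+\epsilon}$, and then the interpolation in Regime~1 allows the same exponent on the $h$-side without any essential loss.
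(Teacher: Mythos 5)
Your proof is correct. Regime~2 ($t<h$) is essentially the paper's argument: both reduce the discrete term to a scaled indicator, compute or bound its $L^2$ norm as $O(h^{-1/2})$, combine with $\|p_t(x-\cdot)\|_{L^2}^2\lesssim t^{-1/2}$, and absorb the mismatch using $t<h$. In Regime~1 ($t\geq h$) you take a genuinely different and arguably cleaner route. The paper splits $\int_\bR$ into a central interval $|x-y|\lesssim t/\sqrt{h}$, on which it integrates the squared pointwise bound $(\sqrt{h}/t)^2$ from Corollary~\ref{coro_g-p}, and a complementary tail, on which only the heat kernel survives and the Gaussian tail bound (Lemma~\ref{tail-normal}) is invoked. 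You instead observe that both functions being compared are probability densities on $\bR$ (the discrete one by the parity-summation argument you give), so $\|F\|_{L^1}\leq 2$ for free, and then use the interpolation $\|F\|_{L^2}^2\leq\|F\|_{L^\infty}\|F\|_{L^1}$ to bypass the tail estimate entirely. This buys a shorter and more self-contained argument; what the paper's version buys in exchange is that it transfers with almost no change to the coloured-noise analogue Lemma~\ref{lmm-frac-g-p}, where the $\cH_*$ norm is not an $L^p$ norm and the interpolation trick does not directly apply, so the central-plus-tail split is the template the rest of the paper reuses.

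One small imprecision worth tightening: your remark that the discrepancy between $2\lfloor x-y\rfloor_{2\sqrt{h}}$ and $2(\lfloor x\rfloor_{2\sqrt{h}}-\lfloor y\rfloor_{2\sqrt{h}})$ is absorbed ``via $|p'_s|\lesssim 1/s$'' is not quite the right mechanism, since the argument of $G$ is an integer and not a point where $p$ is evaluated. The correct observation is that the two integer arguments have the same parity and differ by at most $2$, so the local limit theorem (Lemma~\ref{lmm_g-p}, read with a $z$-window of radius a fixed constant rather than $2$, which the same proof gives at the cost of the implicit constant) compares both to $2p_{\lfloor t\rfloor_h}$ evaluated at a common point, yielding $|G(m,n)-G(m,n')|\lesssim m^{-1}$ for $|n-n'|\leq 2$. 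This is the same uniform $O(\sqrt{h}/t)$ cost you claim, and the paper's application of Corollary~\ref{coro_g-p} inside the integral implicitly makes the same appeal, so no substantive gap — just a mislabelled justification.
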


 \begin{lemma}\label{prop_l2-f-g}
  Fix $h > 0$, $k \in \bZ_{>0}$, and $(s_1, \dots, s_k) \in [0, t]_<^k$. Let $f_k$ and $g_{h, k}$ be given by \eqref{def_fk} and \eqref{def_gk}, respectively. Then, for any $\epsilon \in (0, 1/2)$, 
  \begin{align*}
   \int_{\bR^k} \big| f_k (\bfs_k, \bfy_k) - g_{h, k} (\bfs_k, \bfy_k)\big|^2 d \bfy_k \leq C^k h^{\frac{1}{2} - \epsilon} \sum_{j = 1}^k (s_{j + 1} - s_j)^{-1 + \epsilon}\prod_{\substack{1 \leq i \leq k \\ i \neq j}} (s_{i + 1} - s_i)^{-\frac{1}{2}} . 
  \end{align*}
 \end{lemma}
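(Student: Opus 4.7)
The plan is to telescope the difference $f_k - g_{h,k}$ factor by factor and then integrate the resulting product sequentially, invoking Lemma \ref{lmm-l2-g-p} exactly once per term. Writing $\beta_i(y_i, y_{i+1}) \coloneqq p_{s_{i+1}-s_i}(y_{i+1}-y_i)$ and $\alpha_i(y_i, y_{i+1}) \coloneqq \tfrac{1}{2\sqrt{h}} G(\ks_i, \ky_i)$ under the convention $y_{k+1} = x$, so that $f_k = \prod_i \beta_i$ and $g_{h,k} = \prod_i \alpha_i$, I would use
\[
f_k - g_{h,k} = \sum_{j=1}^{k} D_j, \qquad D_j \coloneqq \Big(\prod_{i<j}\alpha_i\Big)(\beta_j - \alpha_j)\Big(\prod_{i>j}\beta_i\Big),
\]
together with $\big(\sum_j D_j\big)^2 \le k\sum_j D_j^2$, reducing the problem to bounding each $\int_{\bR^k} D_j^2\, d\bfy_k$.

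Next I would establish three uniform one-variable $L^2$ estimates. For $\beta_i$, translation invariance gives $\int \beta_i^2\, dy_i = p_{2(s_{i+1}-s_i)}(0) \asymp (s_{i+1}-s_i)^{-1/2}$, independent of $y_{i+1}$. For $\alpha_i$, although the floors $\lfloor y_i\rfloor_{2\sqrt{h}}$ and $\lfloor y_{i+1}\rfloor_{2\sqrt{h}}$ enter separately, summing over the $2\sqrt{h}$-cells of $y_i$ collapses the integral to
\[
\int \alpha_i^2\, dy_i = \frac{1}{2\sqrt{h}}\sum_{m\in\bZ} G(\ks_i, 2m + c_i)^2 = \frac{1}{2\sqrt{h}}\,\bP(S_{2\ks_i}=0),
\]
where $c_i \in \{-1,0,1\}$ is the shift coming from the two $\tau$-corrections in \eqref{def_ky} and depends only on the time indices; the parity of $\ky_i$ matches that of $\ks_i$ by design of $\tau$, so the sum survives, and Lemma \ref{lmm_g-p} then yields $\int \alpha_i^2\, dy_i \lesssim (s_{i+1}-s_i)^{-1/2}$, again independent of $y_{i+1}$. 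Finally, for the ``difference'' factor, Lemma \ref{lmm-l2-g-p}, combined with a triangle inequality that absorbs the $O(1)$ discrepancies between $\ks_j$ and $\lfloor s_{j+1}-s_j\rfloor_h$ and between $c_j$ and $\pm\tau(\lfloor s_{j+1}-s_j\rfloor_h)$, gives $\int (\beta_j - \alpha_j)^2\, dy_j \lesssim h^{1/2-\epsilon}(s_{j+1}-s_j)^{-1+\epsilon}$ uniformly in $y_{j+1}$.

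With these in hand, I would integrate $D_j^2$ in the order $y_1, y_2, \dots, y_k$: at each step exactly one remaining factor depends on the variable being integrated (previous integrations having produced constants), and by the uniformity above the resulting bound is a constant that can be pulled out before proceeding. Multiplying the $j-1$ bounds for the $\alpha$'s on the left, the one bound for the difference in the middle, and the $k-j$ bounds for the $\beta$'s on the right yields
\[
\int_{\bR^k} D_j^2\, d\bfy_k \lesssim C^k\, h^{1/2-\epsilon}(s_{j+1}-s_j)^{-1+\epsilon} \prod_{\substack{1\le i\le k \\ i\neq j}} (s_{i+1}-s_i)^{-1/2},
\]
and summing over $j$ (absorbing the factor $k$ into $C^k$) gives the claim. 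The principal obstacle is the first estimate on $\alpha_i$: because $\alpha_i$ couples $y_i$ and $y_{i+1}$ through two independent floor functions rather than through their difference alone, the usual translation-invariance argument that factorises $L^2$ products of heat kernels breaks down, and the one-variable $L^2$ norm could, a priori, depend on $y_{i+1}$ or collapse to zero by parity mismatch; resolving this hinges on the parity bookkeeping built into \eqref{def_ky} via $\tau$, which is what ultimately ensures that $\|\alpha_i(\cdot, y_{i+1})\|_{L^2}^2$ is independent of $y_{i+1}$ and permits the clean sequential integration that drives the whole proof.
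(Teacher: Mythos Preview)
Your proposal is correct and amounts to the same argument as the paper's, which proceeds by induction on $k$: the paper splits off the last factor, writing $f_k - g_{h,k}$ as $\alpha_k\cdot(\text{lower-order difference}) + (\alpha_k-\beta_k)\cdot\prod_{i<k}\beta_i$, and then invokes Lemmas \ref{lmm_g-h1}, \ref{lmm-l2-g-p} and inequality \eqref{eq_est-chaos-flat} for the three pieces---exactly your one-variable $L^2$ estimates on $\alpha_i$, $\beta_j-\alpha_j$, and $\beta_i$, respectively. Unrolling their induction recovers your telescoping sum (with the roles of ``left'' and ``right'' of $j$ swapped), and your parity observation about $c_j$ and $\tau$ is precisely what underlies Lemma \ref{lmm_g-h1} in the white-noise case; in fact $c_j \equiv \ks_j \pmod 2$ forces $c_j = \pm\tau(\ks_j)$, so no extra triangle-inequality correction is needed there.
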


Now, we are ready to present the proof of Proposition \ref{thm_rate-l2}. 
\begin{proof}[Proof of Proposition \ref{thm_rate-l2}]
This proposition is proved using the chaos expansion. In view of the hypercontractivity property for fixed chaoses (cf. \cite[Formula (7.1.41)]{ws-17-hu}), it is sufficient to establish the result for $p = 2$. The extension to $p > 2$ are left to the reader for brevity. 

Thanks to Theorem \ref{thm_eu}, Lemmas \ref{lmm-chaos-h} and \ref{prop_l2-f-g}, we can write
\begin{align*}
 \bE \big[ \big|u_h ( \lfloor t \rfloor_h ,  \lfloor x \rfloor_{2 \sqrt{h}}) & - u(t, x) \big|^2 \big] = \sum_{k = 1}^{\infty} \frac{1}{k!}\int_{[0, t]^k} \int_{\bR^k} \big| f_k (\bfs_k, \bfy_k) - g_{h, k} (\bfs_k, \bfy_k)\big|^2 d \bfs_k d \bfy_k\\
 \leq & h^{\frac{1}{2} - \epsilon} \sum_{k = 1}^{\infty} C^k \int_{[0, t]_<^k} \sum_{j = 1}^k (s_{j + 1} - s_j)^{-1 + \epsilon}\prod_{\substack{1 \leq i \leq k \\ i \neq j}} (s_{i + 1} - s_i)^{-\frac{1}{2}}  d \bfs_k. 
\end{align*}
Using \cite[Lemma 4.5]{ejp-15-hu-huang-nualart-tindel} and upper bounds for the Mittag-Leffler functions (cf. \cite[Formula (1.8.10)]{elsevier-06-kilbas-strivastava-trujillo}), we conclude that
\begin{align*}
 \bE \big[ \big|u_h ( \lfloor t \rfloor_h ,  \lfloor x \rfloor_{2 \sqrt{h}}) - u(t, x) \big|^2 \big]\lesssim & h^{\frac{1}{2} - \epsilon} \sum_{k = 1}^{\infty} \frac{C^k t^{k/2 - \frac{1}{2} + \epsilon}}{\Gamma(k/2 + 1/2 + \epsilon)} \lesssim h^{\frac{1}{2} - \epsilon} e^{C t}. 
\end{align*}
This completes the proof of this proposition. 
\end{proof}

It remains to show Lemmas \ref{lmm-chaos-h}--\ref{prop_l2-f-g}. 

\begin{proof}[Proof of Lemma \ref{lmm-chaos-h}]
 Let $\{H_{k} \colon k \in \bZ_{>0} \}$ denote the Hermite polynomials as defined in \eqref{def_hermite}. Then,  using \eqref{def_wick-herm}, $u_h (m, n)$ given by \eqref{fk_h_X} can be written as 
 \begin{align*}
  u_h (m, n) = & 1 + \sum_{k = 1}^{\infty} \frac{t^k}{k!} \bE^S \big[H_k (X/t)\big], 
 \end{align*}
where
\begin{align*}
 X \coloneqq \sum_{i = 1}^{m} W_h \Big(i, \Big\lfloor\frac{S_{m + 1 - i} }{2} \Big\rfloor + n \Big) \quad \text{and} \quad t = \sqrt{{\rm Var}^W (X)}. 
\end{align*}
Additionally, \cite[Example 5.6]{ws-17-hu} suggests that, for any $k \in \bZ_{>0}$, and $\phi \in L_2 (\bR_{\geq 0} \times \bR)$, 
 \begin{align*}
   H_{k} \big(W(\phi)\big) = I_{k} \big(\phi^{\otimes k}\big). 
 \end{align*}
 Recalling $W_h$ given by \eqref{def_wh} and that $\cT_h^{(m)}$ and $\cS_h^{(n)}$ are indicator functions on intervals
$[(m - 1)h, mh )$ and $[2n \sqrt{h}, 2 (n + 1) \sqrt{h} )$, receptively, we can write
 \begin{align*}
  H_{k} (X/t) 
 = & t^{-k} I_k \bigg( \Big(\sum_{i = 1}^m \frac{1}{2\sqrt{h}} \cT_h^{(i)} \times \cS_h^{\big(\lfloor\frac{S_{m + 1 - i}}{2}\rfloor + n \big)} \Big)^{\otimes k} \bigg). 
 \end{align*}
 Therefore, from the preceding definitions of the integral kernels  $g_{h, k}$'s in \eqref{def_gk}, and of the alternating indicator $\tau$ in \eqref{def_tau-t}, it follows that for all $(\bfs_k, \bfy_k) \in [0, t]^k \times \bR^k$
 \begin{align*} 
  g_{h, k} (\bfs_k, \bfy_k) = &  \bE^S \bigg[\prod_{j = 1}^k \Big(\sum_{i = 1}^m \frac{1}{2\sqrt{h}} \cT_h^{(i)} (s_j) \times \cS_h^{\big(\lfloor\frac{S_{m + 1 - i} }{2}\rfloor + n \big)} (y_j) \Big)\bigg] \nonumber\\
   = & 2^{-k} h^{- \frac{k}{2}} \bP \Big(\Big\lfloor\frac{S_{m - \lfloor s_j \rfloor_h} }{2}\Big\rfloor = \lfloor y_j \rfloor_{2 \sqrt{h}} - n, j = 1, \dots, k \Big) \nonumber\\
  = & 2^{-k} h^{- \frac{k}{2}} \bP \big(S_{m - \lfloor s_j \rfloor_h} = 2 \big(\lfloor y_j \rfloor_{2 \sqrt{h}} - n \big) + \tau \big(m - \lfloor s_j \rfloor_h \big), j = 1, \dots, k \big). 
 \end{align*}
Thus, it follows from the property that $S$ is a homogeneous Markov chain with symmetric transition probabilities that
\[
  g_{h, k} (\bfs_k, \bfy_k) = 2^{-k} h^{- \frac{k}{2}} \prod_{i = 1}^{k} \bP \big(S_{\ks_i} = \ky_i\big) = 2^{-k} h^{- \frac{k}{2}} \prod_{i = 1}^{k} G \big(\ks_i, \ky_i \big),
\]
where we recall that $\ks_i$ and $\ky_i$ are defined in \eqref{def_ks} and \eqref{def_ky}, respectively.
 This completes the proof of the lemma. 
\end{proof}

\begin{remark}\label{rmk-chaos-colour}
 One may observe that the space-time white noise assumption in the proof of Lemma \ref{lmm-chaos-h} is not essential. In fact, the result remains valid in the case of coloured noise, as will be applied in the next subsection. 
\end{remark}

\begin{proof}[Proof of Lemma \ref{lmm-l2-g-p}]

Denote by $L$ the left hand side of \eqref{eq_l2-g-p}. First, when $t < h$, then it simply follows from the triangular inequality, and Lemmas \ref{lmm_g-h1} and \ref{lmm_inn_heat}, that
\begin{align*}
 L \lesssim & \frac{1}{4h} \int_{\bR} G \big(\lfloor t \rfloor_{h}, 2 (\lfloor x \rfloor_{2 \sqrt{h}} - \lfloor y \rfloor_{2 \sqrt{h}})\big)^2 dy + \int_{\bR} p_{t} (x - y)^2 d y \\
 \lesssim & t^{-\frac{1}{2}} \leq t^{-\frac{1}{2}} \big(h/t\big)^{\frac{1}{2} - \epsilon} = h^{\frac{1}{2} - \epsilon} t^{-1 + \epsilon}, 
\end{align*}
for all $\epsilon < \frac{1}{2}$, since $t < h$. 
This completes the proof of inequality \eqref{eq_l2-g-p} assuming $t < h$.

In the remainder of this proof, we focus on the case where $t \geq h$.  Thanks to Lemma \ref{tail-normal} and Corollary \ref{coro_g-p}, we deduce that
\begin{align*}
 L \lesssim h t^{-2} \int_{x - \frac{3t}{\sqrt{h}}}^{x + \frac{3t}{\sqrt{h}}} dy + t^{-\frac{1}{2}}\int_{\bR \setminus [x - \frac{t}{\sqrt{h}}, x + \frac{t}{\sqrt{h}}]} p_{2t} (x - y) dy 
 \lesssim t^{-1}h^{\frac{1}{2}} \leq t^{-1 + \epsilon} h^{\frac{1}{2} - \epsilon}, 
\end{align*}
because $h \leq t$. 
This concludes the verification of the lemma. 
\end{proof}

\begin{proof}[Proof of Lemma \ref{prop_l2-f-g}]
 The proof of this lemma is based on Lemma \ref{lmm-l2-g-p} and the induction in $k$. When $k = 1$, this lemma reduces to Lemma \ref{lmm-l2-g-p}. For general $k \geq 2$, we can write
 \begin{align*}
  \int_{\bR^k} d \bfy_k \big| f_k (\bfs_k, \bfy_k) - g_{h, k} (\bfs_k, \bfy_k)\big|^2 \lesssim & J_1 + J_2, 
 \end{align*}
 where
 \begin{align*}
  J_1 \coloneqq \int_{\bR} d y_k 
\frac{1}{4h} G (\ks_k, \ky_k)^2 \int_{\bR^{k - 1}} d \bfy_{k - 1} \Big[ & (4h)^{-\frac{k - 1}{2}}\prod_{j = 1}^{k - 1} G(\ks_j, \ky_j) - \prod_{j = 1}^{k - 1} p_{s_{j + 1} - s_{j}} (y_{ j + 1} - y_{j}) \Big]^2, 
 \end{align*} 
 and
 \begin{align*}
  J_2 \coloneqq & \int_{\bR} d y_k \Big( \frac{1}{2\sqrt{h}} G (\ks_k, \ky_k) - p_{t - s_{k}} (x - y_{ k}) \Big)^2 \int_{\bR^{k - 1}} d \bfy_{k - 1} \prod_{j = 1}^{k - 1} p_{s_{ j + 1} - s_{j}} (y_{ j + 1} - y_{j})^2 . 
 \end{align*} 
 The proof of this lemma is complete by referring to Lemmas \ref{lmm_g-h1} and \ref{lmm-l2-g-p}, inequality \eqref{eq_est-chaos-flat}, and the induction hypothesis. 
\end{proof}

\subsection{Coloured noises}\label{sec_colour}
In this subsection, we complete the proof of Theorem \ref{thm_rate-frac}. 
The proof closely follows that of the white noise case, relying heavily on the chaos expansion for approximate solutions (Lemma \ref{lmm-chaos-h}) and on the difference estimate between the densities of Brownian motion and rescaled random walk, as summarised in the next lemma. 

\begin{lemma}\label{prop_frac-f-g}
 Fix $h > 0$, $k \in \bZ_{>0}$, and $(s_1, \dots, s_k) \in [0, t]_<^k$. Let $f_k$ and $g_{h, k}$ be given as in \eqref{def_fk} and \eqref{def_gk}, respectively. Then, with any $\epsilon \in (0, (2H + H_* - 1)\wedge 1)$, there exists a constant $C > 0$ depending on $\epsilon$ and $t$ such that
 \begin{align*}
  \big\| f_k (\bfs_k, \cdot) - g_{h, k} (\bfs_k, \cdot) \big\|_{\cH_*^{\otimes k}}^2 
   \leq C^k h^{(2H + H_* - 1) \wedge 1 - \epsilon} \sum_{j = 1}^k \Big[ &(s_{j + 1} - s_j)^{-[2H \wedge (2 - H_*)] + \epsilon}\\
   & \times \prod_{\substack{1 \leq i \leq k \\ i \neq j}} (s_{i + 1} - s_i)^{H_* - 1} \Big]. 
 \end{align*}
\end{lemma}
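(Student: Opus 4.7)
The proof of Lemma \ref{prop_frac-f-g} proceeds by induction on $k$, paralleling the argument of Lemma \ref{prop_l2-f-g} but replacing the $L^2(\mathbb{R}^k)$ norm by $\cH_*^{\otimes k}$, and exploiting the smoothness of the heat-kernel family $\{p_u(x-\cdot)\}_{u>0}$ in the time parameter $u$ to extract the temporal-Hurst exponent $H$ in the bound.

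For the base case $k=1$, I would establish the one-slot estimate
\begin{align*}
\|p_{t-s_1}(x-\cdot) - g_{h,1}(s_1, \cdot)\|_{\cH_*}^2 \lesssim h^{(2H+H_*-1)\wedge 1 - \epsilon}(t-s_1)^{-[2H\wedge(2-H_*)]+\epsilon}.
\end{align*}
Setting $T = t - s_1$ and $\ks_1 = \lfloor t\rfloor_h - \lfloor s_1\rfloor_h$, I split
\begin{align*}
f_1 - g_{h,1} = \bigl(p_T(x-\cdot) - p_{\ks_1 h}(x-\cdot)\bigr) + \bigl(p_{\ks_1 h}(x-\cdot) - g_{h,1}(s_1,\cdot)\bigr).
\end{align*}
For the time-discretization piece, using $\partial_u p_u = \tfrac12 \Delta p_u$ together with the scaling $\|p_u\|_{\cH_*}^2 \lesssim u^{H_*-1}$ from Lemma \ref{lmm_inn_heat} yields, via interpolation against the trivial bound, $\|p_u - p_v\|_{\cH_*}^2 \lesssim |u-v|^{2\alpha}(u\wedge v)^{H_*-1-2\alpha}$ for any $\alpha \in [0,1]$; choosing $\alpha$ slightly below $H$ and using $|T - \ks_1 h| \leq h$ produces a contribution of the required order. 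For the spatial-discretization piece, I combine the pointwise local-limit estimate $\lesssim \sqrt{h}/T$ from Corollary \ref{coro_g-p} with the Bernstein tail bound of Lemma \ref{tail-binom} (restricting $y$ effectively to $|y-x| \lesssim T/\sqrt{h}$) and evaluate $\iint \gamma_*(y-y')\,dy\,dy'$ over this region; interpolating against the trivial bound from Lemma \ref{lmm_g-h1} and Lemma \ref{lmm_inn_heat} absorbs boundary mismatches into the $\epsilon$-correction and closes the base case.

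For the inductive step $k \geq 2$, I would follow the $J_1+J_2$ decomposition of Lemma \ref{prop_l2-f-g} by writing
\begin{align*}
f_k - g_{h,k} = \bigl(f_{k-1} - g_{h,k-1}\bigr)\, f_1^{(k)} + g_{h,k-1}\, \bigl(f_1^{(k)} - g_{h,1}^{(k)}\bigr),
\end{align*}
where $f_1^{(k)} = p_{t-s_k}(x - y_k)$ is the final factor and $g_{h,1}^{(k)}$ is its discrete analog. By the triangle inequality in $\cH_*^{\otimes k}$, the first term is bounded by the induction hypothesis on the first $k-1$ slots combined with the chaos-norm estimate \eqref{eq_est-chaos-flat} applied to the $k$-th slot, and the second term is bounded by the base-case estimate on the $k$-th slot combined with repeated application of Lemma \ref{lmm_g-h1} to the factors of $g_{h,k-1}$. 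The $j=k$ contribution from the second term and the $j \in \{1,\dots,k-1\}$ contributions from the first term then assemble into the full sum appearing in the statement.

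The principal obstacle, absent in the white-noise setting, is that $\cH_*^{\otimes k}$ does not factor into a product of per-slot norms: the singular kernel $\gamma_*(y - y')$ couples spatial variables, and consecutive heat-kernel factors share spatial arguments through the time-ordered chain. I expect to resolve this by integrating out one variable at a time against $\gamma_*$ via slot-wise Cauchy--Schwarz, using the semigroup identity in Lemma \ref{lmm_heat-fac} to recombine heat kernels after each integration; where the direct $\cH_*$ estimate becomes awkward due to the singularity of $\gamma_*$, Lemma \ref{lmm_hls} permits passing to an $L^{1/H_*}$ norm in selected slots, which factors cleanly at the cost of an additional H\"older step. Careful bookkeeping is required so that the per-slot factors $(s_{i+1}-s_i)^{H_*-1}$ combine with the singular special-slot factor $(s_{j+1}-s_j)^{-[2H\wedge(2-H_*)]+\epsilon}$ into the claimed bound, with uniformity in $j$ ensuring that the summation and the compounded $C^k$ prefactor match the stated form.
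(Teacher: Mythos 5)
Your inductive step matches the paper's: telescope $f_k - g_{h,k}$ over the last slot, apply slot-wise Cauchy--Schwarz to handle the non-factoring $\cH_*^{\otimes k}$ norm, and invoke Lemma~\ref{lmm_g-h1} and \eqref{eq_est-chaos-flat} per slot (the paper carries this out explicitly only for the delta case, Lemma~\ref{prop_f-g-delta}, and declares the flat case analogous). The difference, and the gap, is in the one-slot base case, which the paper isolates as Lemma~\ref{lmm-frac-g-p}. The paper does \emph{not} integrate the pointwise local-limit bound $\lesssim\sqrt{h}/t$ from Corollary~\ref{coro_g-p} over the whole support of $G$; it splits space into a near region $D_1'$ of radius $h^{1/2-\alpha}t^{\alpha}$, with $\alpha = \bigl(\tfrac{1-H}{H_*}\vee\tfrac12+\tfrac{\epsilon}{2H_*}\bigr)\wedge 1 > \tfrac12$ tuned to the Hurst parameters, where the pointwise bound is used, and a far region $D_2'$, where the exponential tails of both $G$ (Lemma~\ref{tail-binom}) and $p_t$ (Lemma~\ref{tail-normal}) are used. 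It is precisely the tuned $\alpha$ that produces the exponent $(2H+H_*-1)\wedge 1 - \epsilon$ on $h$.

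Your spatial piece instead integrates the pointwise $\sqrt{h}/T$ bound over the full support $|y-x|\lesssim T/\sqrt{h}$, giving
\[
\frac{h}{T^2}\cdot\Bigl(\frac{T}{\sqrt{h}}\Bigr)^{2H_*} = h^{1-H_*}\,T^{2H_*-2},
\]
which falls short of the required $h^{(2H+H_*-1)\wedge 1-\epsilon}\,T^{-[2H\wedge(2-H_*)]+\epsilon}$ whenever $H+H_*>1$, i.e.\ for every admissible pair of parameters except $H=H_*=\tfrac12$ (for $H=H_*=\tfrac34$, say, you get $h^{1/4}T^{-1/2}$ versus the needed $h^{1-\epsilon}T^{-5/4+\epsilon}$). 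Interpolation against the $h$-free trivial bound $T^{H_*-1}$ from Lemmas~\ref{lmm_g-h1} and \ref{lmm_inn_heat} cannot recover this -- any convex combination of the exponents $0$ and $1-H_*$ stays $\leq 1-H_*<(2H+H_*-1)\wedge 1$. The discrepancy is an $O(1)$ power of $h$, not an $\epsilon$-size boundary mismatch. Bernstein's inequality in fact lets you shrink the effective integration region far below the support -- to a scale near $\sqrt{T\log(T/h)}$ or, as the paper does, to $h^{1/2-\alpha}T^{\alpha}$ -- and the far-region tail estimate is load-bearing; the proposal states the right ingredients (Corollary~\ref{coro_g-p}, Lemma~\ref{tail-binom}) but does not combine them at the right scale. (Your time-discretization piece, handled by interpolation on $u\mapsto p_u$ in $\cH_*$, is plausible, though the correct interpolation exponent is $\alpha=\tfrac12[(2H+H_*-1)\wedge 1]-\tfrac{\epsilon}{2}$ rather than ``slightly below $H$''; note that by Corollary~\ref{coro_g-p} this split is not actually necessary, since the local limit theorem there is already stated against the continuous-time kernel $p_t$.)
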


\begin{proof} 
 Once establishing Lemma \ref{lmm-frac-g-p} below, Lemma \ref{prop_frac-f-g} follows easily by the same argument used in the proof of Lemma \ref{prop_l2-f-g}. Thus, we omit the details for conciseness. 
\end{proof}

\begin{lemma}\label{lmm-frac-g-p}
 Fix $h > 0$. Let $p$ denote the heat kernel, and let $G$ be defined in \eqref{def_Gh}. Then, for any $(t, x) \in \bR_{> 0} \times \bR$, and $\epsilon \in (0, (2H + H_* - 1) \wedge 1)$, it holds with the alternating indicator $\tau$ given by \eqref{def_tau-t} that
  \begin{align*}
   &\Big\|\frac{1}{2\sqrt{h}} G \big(\lfloor t \rfloor_h, 2(\lfloor x \rfloor_{2 \sqrt{h}} - \lfloor \cdot \rfloor_{2 \sqrt{h}}) \pm \tau(\lfloor t \rfloor_h) \big) - p_{t} (x - \cdot)\Big\|_{\cH_*}^2 \\
    \lesssim & h^{(2 H + H_* - 1) \wedge 1 - \epsilon} t^{- [2H \wedge (2 - H_*)]+ \epsilon} , 
  \end{align*}
where `$\pm$' indicates the inequality holds with either `$+$' or `$-$'. 
 \end{lemma}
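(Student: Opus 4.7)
The plan is to adapt the proof of Lemma~\ref{lmm-l2-g-p} to the $\cH_*$-setting, by establishing two endpoint estimates and interpolating between them. Write $L$ for the left-hand side and $g_h(\cdot)$ for $\tfrac{1}{2\sqrt{h}} G\bigl(\lfloor t\rfloor_h,\,2(\lfloor x\rfloor_{2\sqrt{h}} - \lfloor\,\cdot\,\rfloor_{2\sqrt{h}})\pm\tau(\lfloor t\rfloor_h)\bigr)$. The case $t<h$ is immediate from the triangle inequality together with Lemmas~\ref{lmm_g-h1} and~\ref{lmm_inn_heat}, which give $L\lesssim t^{H_*-1}$; this is dominated by the right-hand side because a short case analysis (on whether $2H+H_*\le 2$) yields the identity $(2H+H_*-1)\wedge 1-[2H\wedge(2-H_*)]=H_*-1$, combined with $t<h$.

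For $t\ge h$ I would prove two endpoint bounds on $L$. Endpoint~(a): $L\lesssim h^{H_*-\epsilon}\,t^{-1+\epsilon}$, obtained by combining Corollary~\ref{coro_g-p} (pointwise error $\lesssim\sqrt{h}/t$ on the common support, of length $\lesssim t/\sqrt{h}$), the Gaussian and Bernstein tail estimates of Lemmas~\ref{tail-normal}--\ref{tail-binom}, and the Hardy--Littlewood--Sobolev embedding $\cH_*\hookrightarrow L^{1/H_*}$ from Lemma~\ref{lmm_hls}, via the interpolation $\|f\|_{L^{1/H_*}}\le\|f\|_\infty^{1-H_*}\|f\|_{L^1}^{H_*}$ together with a Berry--Esseen style bound $\|g_h-p_t(x-\cdot)\|_{L^1}\lesssim\sqrt{h/t}\,\log^{1/2}(t/h)$. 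Endpoint~(b): $L\lesssim h\,t^{-(2-H_*)}$, obtained on the Fourier side via $\|f\|_{\cH_*}^2 = c_{H_*}\int|\hat f(\xi)|^2|\xi|^{1-2H_*}\,d\xi$; here the characteristic function $\cos(\sqrt{h}\,\xi)^{\lfloor t\rfloor_h}$ of the rescaled simple random walk agrees with $e^{-t\xi^2/2}$ up to a factor of order $h\xi^2\cdot e^{-ct\xi^2}$, which lives on the low-frequency side where the weight $|\xi|^{1-2H_*}$ concentrates its mass.

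These two endpoints bracket a line in the log-log $(h,t)$-exponent plane that passes through the target point $\bigl((2H+H_*-1)\wedge 1,\,-[2H\wedge(2-H_*)]\bigr)$, so interpolating between them with the interpolation parameter chosen as an affine function of $H$ produces the claimed bound; the $\epsilon$-loss absorbs the strict inequality required by HLS and the logarithmic tail cut-off. I expect the main obstacle to be Endpoint~(b): the pointwise-in-space analysis that sufficed for the $L^2$-norm saturates at $h^{1-H_*}t^{2H_*-2}$ and has the wrong $h$-exponent whenever $H_*>\tfrac{1}{2}$, so genuine Fourier-side cancellation is needed to exploit the smoothness of the heat kernel together with the downweighting of high frequencies by $|\xi|^{1-2H_*}$.
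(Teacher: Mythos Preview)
Your HLS route contains a correct proof, but the bookkeeping is off and the superstructure is unnecessary. Plugging $\|f\|_\infty\lesssim\sqrt{h}/t$ and $\|f\|_{L^1}\lesssim(h/t)^{1/2}\log^{1/2}(t/h)$ into $\|f\|_{L^{1/H_*}}\le\|f\|_\infty^{1-H_*}\|f\|_{L^1}^{H_*}$ gives
\[
\|f\|_{L^{1/H_*}}^2 \lesssim (h^{1/2}t^{-1})^{2(1-H_*)}(h/t)^{H_*}\log^{H_*}(t/h)
= h\,t^{-(2-H_*)}\log^{H_*}(t/h),
\]
which is your endpoint~(b), not~(a). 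So the method you assign to~(a) actually produces~(b), and you have no working derivation of~(a).

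This does not matter, because~(b) alone already implies the lemma: writing the bound as $h^a t^{-b}$, both~(b) and the target satisfy $a-b=H_*-1$, and the target has $h$-exponent $(2H+H_*-1)\wedge1\le 1$, so for $h\le t$
\[
h\,t^{-(2-H_*)} = (h/t)^{1-[(2H+H_*-1)\wedge1]}\cdot h^{(2H+H_*-1)\wedge1}\,t^{-[2H\wedge(2-H_*)]}
\]
is dominated by the target. Absorb the logarithm into $(t/h)^{\epsilon}$ and you are done; the two-endpoint interpolation and the Fourier argument are superfluous. (The Fourier sketch is also inaccurate as written: the leading error in the characteristic function is of order $th\xi^4\,e^{-t\xi^2/2}$ rather than $h\xi^2\,e^{-ct\xi^2}$, and the step-function nature of $g_h$ introduces a sinc factor and a high-frequency regime $|\xi|\gtrsim h^{-1/2}$ that would need separate care.)

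The paper takes a different route: it works directly with the $\cH_*$-kernel in physical space, splitting $\bR$ into a bulk region $\{|y-x|\lesssim (t/h)^{\alpha}\sqrt{h}\}$ and a tail, with $\alpha=\big(\tfrac{1-H}{H_*}\vee\tfrac12+\tfrac{\epsilon}{2H_*}\big)\wedge1$. On the bulk it squares the pointwise local-limit error $\sqrt{h}/t$ and integrates $|y-z|^{2H_*-2}$ over the square; on the tail it uses the Gaussian and Bernstein bounds directly inside the $\cH_*$-integral. The parameter $\alpha$ is tuned so that the bulk contribution already has the target exponents and the tail is negligible. Your (corrected) HLS argument is shorter and avoids this optimisation, at the cost of importing an $L^1$ Berry--Esseen bound not stated in the paper and of using HLS on $\bR$ rather than the compactly-supported version of Lemma~\ref{lmm_hls}.
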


The proof of Lemma \ref{lmm-frac-g-p} is postponed to the end of this subsection. 

\begin{proof}[Completing the proof of Theorem \ref{thm_rate-frac}]
Similar to the white noise case, we only consider the case when $p = 2$, and the proof is a straightforward consequence of Lemma \ref{prop_frac-f-g}. Here, we assume that $H > \frac{1}{2}$, as when $H = \frac{1}{2}$, the final inequality in \eqref{ieq_rate-frac-1} below can be obtained directly without using the Hardy-Littlewood-Sobolev inequality (Lemma \ref{lmm_hls}), and the desired result follows accordingly. 

It follows from Theorem \ref{thm_eu}, Lemmas \ref{lmm_hls}, \ref{lmm-chaos-h}, and \ref{prop_frac-f-g}, and Remark \ref{rmk-chaos-colour}, that
\begin{align}\label{ieq_rate-frac-1}
 & \bE \big[ |u_h ( \lfloor t \rfloor_h ,  \lfloor x \rfloor_{2 \sqrt{h}}) - u(t, x) |^2 \big] \nonumber\\
 = & \sum_{k = 1}^{\infty} \frac{C_H^k}{k!} \int_{[0, t]^{2k}} \int_{\bR^k} \prod_{i = 1}^k |s_i - r_i|^{2H - 2} \big\|f_k (\bfr_k, \cdot) - g_{h, k} (\bfr_k, \cdot)\big\|_{\cH_*^{\otimes k}} d \bfs_kd \bfr_k \nonumber\\
 \leq & h^{(2 H + H_* - 1) \wedge 1 - \epsilon} \sum_{k = 1}^{\infty} C^k\bigg[ \int_{[0, t]_<^k} \Big(\sum_{j = 1}^k (s_{j + 1} - s_j)^{- 1\wedge \frac{2 - H_*}{2H} + \frac{\epsilon}{2H}}\prod_{\substack{1 \leq i \leq k \\ i \neq j}} (s_{i + 1} - s_i)^{\frac{H_* - 1}{2H}} \Big)d \bfs_k \bigg]^{H}. 
\end{align}
The proof of this theorem is then complete by invoking \cite[Lemma 4.5]{ejp-15-hu-huang-nualart-tindel} and upper bounds for the Mittag-Leffler functions. 
\end{proof}

We will conclude this subsection by providing the proof of Lemma \ref{lmm-frac-g-p}. 

\begin{proof}[Proof of Lemma \ref{lmm-frac-g-p}]
It suffices to prove the lemma for the case when $\pm$ takes the sign $+$, as the other case follows similarly. 
Denote
\begin{align*}
 L \coloneqq \Big\|\frac{1}{2\sqrt{h}} G \big(\lfloor t \rfloor_h, 2(\lfloor x \rfloor_{2 \sqrt{h}} - \lfloor \cdot \rfloor_{2 \sqrt{h}}) + \tau(\lfloor t \rfloor_h)\big) - p_{t} (x - \cdot)\Big\|_{\cH_*}^2 . 
\end{align*}
If $t < h$,  thanks to Lemmas \ref{lmm_g-h1} and \ref{lmm_inn_heat}, we have
\begin{align*}
 L \lesssim & \Big\|\frac{1}{2\sqrt{h}} G \big(\lfloor t \rfloor_h, 2(\lfloor x \rfloor_{2 \sqrt{h}} - \lfloor \cdot \rfloor_{2 \sqrt{h}})\big)\Big\|_{\cH_*}^2 + \|p_{t} (x - \cdot)\|_{\cH_*} \\
\lesssim & t^{H_* - 1} \lesssim h^{(2 H + H_* - 1) \wedge 1 - \epsilon} t^{- [2H \wedge (2 - H_*)] + \epsilon}, 
\end{align*}
for all $\epsilon \in (0, (2H + H_* - 1)\wedge 1)$.

Next, assume $t \geq h$.  Similar to the proof of Lemma \ref{lmm-l2-g-p}, we only need to verify this lemma in case that $t/h$ is a positive even integer. Let
\[ 
D_1 \coloneqq \big\{y \colon 2\big|\lfloor x \rfloor_{2 \sqrt{h}} - \lfloor y \rfloor_{2 \sqrt{h}}\big| \leq (t/h)^{\alpha} \big\} \quad \text{and} \quad
 D_2 \coloneqq \bR \setminus D_1, 
 \]
with $\alpha \coloneqq (\frac{1 - H}{H_*} \vee \frac{1}{2} + \frac{\epsilon}{2H_*}) \wedge 1$. 
Then, it can be easily verified that
\[
 D_1 \subseteq D_1' \coloneqq \big\{y \colon |x - y| \leq 4 h^{\frac{1}{2} - \alpha} t^{\alpha} \big\}  \quad \text{and} \quad  D_2 \subseteq D_2' \coloneqq \big\{y \colon |x - y| > h^{\frac{1}{2} - \alpha} t^{\alpha}\big\}. 
\]
As a result, we can write
\begin{align*}
L \lesssim L_1 + L_2, 
\end{align*}
where
\[
 L_i \coloneqq \bigg\| \Big(\frac{1}{2\sqrt{h}} G \big(\lfloor t\rfloor_h, 2 (\lfloor x \rfloor_{2 \sqrt{h}} - \lfloor \cdot \rfloor_{2 \sqrt{h}})\big) - p_{t} (x - \cdot)\Big) \1_{D_i'} (\cdot)\bigg\|_{\cH_*}^2, \quad i = 1, 2.
\]
Recall that $\alpha = (\frac{1 - H}{H_*} \vee \frac{1}{2} + \frac{\epsilon}{2H_*}) \wedge 1$ and $t \geq h$.  As a result of Corollary \ref{coro_g-p}, it holds that
\begin{align} \label{eq_i-1-frac}
 L_1 \lesssim & h t^{-2} \int_{D_1' \times D_1'} |y - z|^{2H_* - 2} \lesssim h^{H_* - 2\alpha H_* + 1} t^{2 \alpha H_* - 2}\nonumber \\
  \lesssim & h^{(2 H + H_* - 1) \wedge 1 - \epsilon} t^{- [2H \wedge (2 - H_*)] + \epsilon}. 
\end{align}

It suffices to estimate $L_2$. Using the elementary triangular inequality, we can write
\begin{align*}
 L_2 \lesssim L_{2, 1} + L_{2, 2}, 
\end{align*}
with
\begin{align*}
 L_{2, 1} \coloneqq \Big\|\frac{1}{2\sqrt{h}} G \big(\lfloor t\rfloor_h, 2 (\lfloor x \rfloor_{2 \sqrt{h}} - \lfloor \cdot \rfloor_{2 \sqrt{h}})\big)\1_{D_2'} (\cdot)\Big\|_{\cH_*}^2 \quad \text{and} \quad L_{2, 2} \coloneqq \| p_{t} (x - \cdot) \1_{D_2'} (\cdot)\|_{\cH_*}^2.
\end{align*}
A changing of variables $(y, y - z) \mapsto (y, z')$ yields that
\begin{align*}
 L_{2, 2} = & \int_{D_2' \times D_2'} dy dz p_t(x - y) p_t (x - z) |y - z|^{2H_* - 2}\\ 
 \lesssim & \int_{D_2'} dy p_t(x - y) \int_{\bR} d z' |z'|^{2H_* - 2} p_t(x - y + z'). 
\end{align*}
Notice that for any $y \in D_2'$, 
\begin{align*}
   \int_{\bR} d z' |z'|^{2H_* - 2} p_t(x - y  z') \lesssim \int_{\bR} |\xi|^{1 - 2H_*} e^{- \frac{1}{2} t |\xi|^2} \lesssim t^{H_* - 1} \lesssim h^{H_* - 1}, 
\end{align*}
because $t \geq h$ and $H_* < 1$. Thus it follows from Lemma \ref{tail-normal}, and the inequality that $\sup_{x \geq 1} e^{-\frac{x}{2}} x^{\beta} < \infty$ for all $\beta \in \bR$, 
\begin{align*}
 L_{2, 2} \lesssim & h^{H_* - 1} \times (t/h)^{\frac{1}{2} - \alpha} e^{-\frac{1}{2}(t/h)^{2\alpha - 1}} \lesssim h^{H_* - 1} (t/h)^{\frac{1}{2} - \alpha - \beta (2\alpha - 1)}. 
 \end{align*}
Choose
\[
 \beta = \frac{ [2H \wedge (2 - H_*)] - \epsilon}{2\alpha - 1} - \frac{1}{2}, 
\]
which is finite, because $\alpha = (\frac{1 - H}{H_*} \vee \frac{1}{2} + \frac{\epsilon}{2H_*}) \wedge 1 > \frac{1}{2}$. 
Then, we get
\begin{align}\label{eq_i-2-2-frac}
 L_{2, 2} \lesssim h^{(2 H + H_* - 1) \wedge 1 - \epsilon} t^{- [2H \wedge (2 - H_*)] + \epsilon}. 
\end{align}
The estimate for $L_{2, 1}$ is also similar. Write
\begin{align*}
 L_{2, 1} = & \int_{D_2' \times D_2'} dy dz |y - z|^{2H_* - 2} \frac{1}{2\sqrt{h}} G \big(\lfloor t\rfloor_h, 2 (\lfloor x \rfloor_{2 \sqrt{h}} - \lfloor y \rfloor_{2 \sqrt{h}})\big) \\ 
 & \qquad \times \frac{1}{2\sqrt{h}} G \big(\lfloor t\rfloor_h, 2 (\lfloor x \rfloor_{2 \sqrt{h}} - \lfloor z \rfloor_{2 \sqrt{h}})\big) \\
 \lesssim & \int_{D_2'} dy \frac{1}{2\sqrt{h}} G \big(\lfloor t \rfloor_h, 2 (\lfloor x \rfloor_{2 \sqrt{h}} - \lfloor y \rfloor_{2 \sqrt{h}})\big) \\ 
 & \qquad \times \int_{\bR} dz' |z'|^{2H_* - 2}  \frac{1}{2\sqrt{h}} G \big(\lfloor t \rfloor_h, 2 (\lfloor x \rfloor_{2 \sqrt{h}} - \lfloor y - z'\rfloor_{2 \sqrt{h}})\big). 
\end{align*}
Next, it follows from Lemma \ref{tail-binom} and Corollary \ref{coro_g-p} that
\begin{align*}
 L_{2, 1} \lesssim & \bP \big(S_{t/h} > (t/h)^{\alpha}\big) \times \Big(\sup_{y \in \bR} \int_{\bR} dz' |z'|^{2H_* - 2} p_t(y - z') + \frac{\sqrt{h}}{t} \int_{- \frac{3 t}{\sqrt{h}}}^{\frac{3 t}{\sqrt{h}}} dz' |z'|^{2H_* - 2}\Big)\\
 \lesssim & e^{- \frac{1}{3} (t/h)^{2\alpha - 1}} \big(t^{H_* - 1} + t^{2H_* - 2} h^{1 - H_*}\big) \lesssim (t/h)^{- \beta(2\alpha - 1)} h^{H_* - 1}, 
\end{align*}
for all $\beta \in \bR$. Set 
\[
 \beta = \frac{ [2H \wedge (2 - H_*)] - \epsilon}{2\alpha - 1}. 
\]
Then, we end up with
\begin{align}\label{eq_i-2-1-frac}
 L_{2, 1} \lesssim h^{(2 H + H_* - 1) \wedge 1 - \epsilon} t^{- [2H \wedge (2 - H_*)] + \epsilon}. 
\end{align}
The proof of this lemma is complete by combining inequalities \eqref{eq_i-1-frac}--\eqref{eq_i-2-1-frac}. 
\end{proof}

\section{\eqref{pam} with delta initial condition}\label{sec_delta}
Let $z \in \bR$. Recall that by definition, a random field $u_z = \{u_z (t, x) \colon (t, x) \in \bR_{\geq 0} \times \bR\}$ is called a solution to \eqref{pam} with a delta initial condition at $z \in \bR$, if
\begin{align*}
 u_z(t, x) = p_t (x - z) + \int_0^t \int_{\bR} p_{t - s} (x - y) u_z (s, y) W(ds, dy). 
\end{align*}
Therefore, for any signed Radon measure $\mu$ on $\bR$, if 
\[
 u_{\mu} (t, x) \coloneqq \int_{\bR} u_z(t, x) \mu(dz), 
\]
is well-defined in certain sense, $u_{\mu}$ is a solution to \eqref{pam} with the initial condition $\mu$. This can be seen with an (unrigorous) application of the stochastic Fubini theorem, 
\begin{align*}
 u_{\mu} (t, x) = & \int_{\bR} p_t (x - z) \mu(dz) + \int_{\bR} \int_0^t \int_{\bR} p_{t - s} (x - y) u_z (s, y) W(ds, dy) \mu(dz)\\
 = & \int_{\bR} p_t (x - z) \mu(dz) + \int_0^t \int_{\bR} p_{t - s} (x - y) u_{\mu} (s, y) W(ds, dy). 
\end{align*}
In other words, solving \eqref{pam} with delta initial conditions is crucial for understanding the solution to \eqref{pam} with general initial conditions. Therefore, we focus on approximating the solution to \eqref{pam} with delta initial condition at $z\in \bR$. For simplicity, we further assume that $z = 0$, as other cases are essentially the same. 

With a delta initial condition at the origin, the Wick renormalised Feynman-Kac formula for \eqref{pam} can be written as
\begin{align}\label{eq_fk-delta}
 u(t, x) = u_0(t, x) = p_t(x) \bE^{\cB} \bigg[ \wick \Big(\int_0^t \int_{\bR} \delta (\cB_{t - s}^{(t, -x)} + x - y) W(ds, dy)\Big) \bigg], 
\end{align}
where $\wick$ is given by \eqref{def_wick}, and $\cB^{(t, x)}$ denotes a Brownian bridge with $\cB^{(t, x)}_0 = 0 $ and $\cB_t^{(t, x)} = x$ for any $(t, x) \in \bR_{> 0} \times \bR$. Hence, it is natural to formulate the discrete Feynman-Kac approximation using a random walk bridge, as it converges in distribution to a Brownian bridge (cf. \cite[Corollary on page 568]{jmm-68-liggett}).

\begin{remark}
To the best of our knowledge, formula \eqref{eq_fk-delta} has not appeared explicitly in the literature. Similar to the flat initial condition case, it may introduce an `$\infty$' in the exponent, and therefore must be understood via a suitable approximation.  Since we only use it here to motivate our approximating solution \eqref{fk_h_X-delta} below which is well-defined, we do not pursue a full rigorous treatment. For further intuition on \eqref{eq_fk-delta}, we refer the reader to the Feynman-Kac formula for moments in terms of Brownian bridges in \cite[Section 4]{aihp-17-huang-le-nualart}.
\end{remark}

For any $(m, n) \in \bZ_{> 0} \times \bZ$, such that $|n| \leq m$ and $n + m$ is an even number, let $\bS^{(m, n)} = \{ \bS_{i}^{(m, n)} \colon i = 0, 1, \dots, m \}$ be a random walk bridge with $\bS^{{(m, n)}}_0 = 0$, and $\bS_{m}^{(m, n)} = n$. 
 Then, the joint distribution of $\bS^{(m, n)}$ can be expressed as follows, 
\begin{align}\label{joint-rwb}
 \bP \big(\bS_{j_1}^{(m, n)} = i_1, \dots, \bS^{(m, n)}_{j_k} = i_k \big) = &  \bP \big(S_{j_1} = i_1, \dots, S_{j_k} = i_k \big| S_m = n \big), 
\end{align}
for all $0 \leq j_1 < \dots < j_k \leq m$, where $S$ is a simple random walk. 
Next, we define the discrete Feynman-Kac approximate solution $u_h (m, n)$ given by
\begin{align}\label{fk_h_X-delta}
 u_h (m, n) = & \frac{1}{2 \sqrt{h}} G \big(m, n - \tau (m)\big) \bE^{\bS} \Bigg[\wick \Bigg(\sum_{i = 1}^{m} W_h \bigg(i, \Big\lfloor\frac{\bS_{m + 1 - i}^{(m, \tau(m) - 2n)}}{2} \Big\rfloor + n \bigg) \Bigg) \Bigg], 
\end{align}
where $\tau$ is defined in \eqref{def_tau-t}.

\begin{theorem}\label{thm_rate-delta}
 Let $u$ be the solution to \eqref{pam} with a delta initial condition at the origin, driven by a fractional Brownian sheet with Hurst parameters $H \geq \frac{1}{2}$ and $H_* \geq \frac{1}{2}$ in time and space, respectively. Let $u_h = \{u_h(m, n) \coloneqq (m, n) \in \bZ_{> 0}\times \bZ\}$ be defined in \eqref{fk_h_X-delta} with some $h > 0$. Then, for any $p \geq 2$, $(t, x) \in \bR_{> 0} \times \bR$ and $\epsilon \in (0, (2H + H_* - 1) \wedge 1 )$, 
 \begin{align}\label{ieq_rate-frac-delta}
   \big\| u_h \big( \lfloor t/h \rfloor ,  \big\lfloor x/\sqrt{4h} \big\rfloor \big) - u(t, x) \big\|_{L^p(\Omega)}^2 \lesssim h^{(2 H + H_* - 1)\wedge 1 - \epsilon}, 
 \end{align}
 where the implicit constant depends on $p$, $t$ and $\epsilon$. 
\end{theorem}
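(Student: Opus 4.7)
The plan is to mirror the proof of Theorem \ref{thm_rate-frac}. I would establish a chaos expansion for $u_h(m,n)$, bound the $\cH_*^{\otimes k}$-distance between the true and approximate chaos kernels, and then sum over chaoses. The two additional ingredients compared with the flat case are: (a) the Gaussian prefactor $(2\sqrt{h})^{-1}G(m,n-\tau(m))$, which becomes the $0$-th chaos of $u_h$ and furnishes the corresponding error contribution; and (b) the replacement of the free random walk by a random walk bridge in each higher chaos.

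The first step is to produce, for $u_h$ defined in \eqref{fk_h_X-delta}, a chaos expansion of the form
\begin{align*}
 u_h(m, n) = \tfrac{1}{2\sqrt{h}} G\big(m, n - \tau(m)\big) + \sum_{k = 1}^{\infty} \frac{1}{k!} I_k\big(g_{h, k}^{(\delta)}\big), \qquad g_{h, k}^{(\delta)} (\bfs_k, \bfy_k) = (2\sqrt{h})^{-(k+1)} \prod_{i = 0}^{k} G(\ks_i, \ky_i),
\end{align*}
mirroring the structure of $f_k^{(\delta)}$ in \eqref{def_fk-delta}. This follows the same route as Lemma \ref{lmm-chaos-h} and Remark \ref{rmk-chaos-colour}; the new point is that the bridge-to-walk identity \eqref{joint-rwb} allows the deterministic prefactor $G(m,n-\tau(m))$ to absorb the normalisation of the bridge measure, so that the Markov property of the free walk $S$ can then be applied in the same way as in the flat case.

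Next I would establish a delta analogue of Lemma \ref{prop_frac-f-g}. By an inductive argument that compares each of the $k+1$ heat-kernel versus walk factors via Lemma \ref{lmm-frac-g-p}, one should obtain a bound of the form
\begin{align*}
 \big\|f_k^{(\delta)}(\bfs_k, \cdot) - g_{h,k}^{(\delta)}(\bfs_k, \cdot)\big\|_{\cH_*^{\otimes k}}^2 \leq C^k h^{(2H+H_*-1)\wedge 1 - \epsilon} \sum_{j = 0}^{k} \Big[(s_{\sigma(j+1)} - s_{\sigma(j)})^{-[2H \wedge (2-H_*)] + \epsilon} \prod_{\substack{0 \leq i \leq k \\ i \neq j}} (s_{\sigma(i+1)} - s_{\sigma(i)})^{H_*-1}\Big],
\end{align*}
with the sum now ranging over $j=0,\ldots,k$ due to the extra initial factor. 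Separately, Corollary \ref{coro_g-p} bounds the $0$-th chaos error by $h/t^2 \lesssim h^{(2H+H_*-1)\wedge 1-\epsilon}\, t^{-2}$, since $h\in(0,1)$ and $(2H+H_*-1)\wedge 1 - \epsilon \leq 1$. Hypercontractivity reduces the $L^p(\Omega)$ bound to $p=2$, and the summation over $k$ proceeds via Lemma \ref{lmm_hls} (when $H>\tfrac12$), \cite[Lemma 4.5]{ejp-15-hu-huang-nualart-tindel}, and Mittag-Leffler bounds, exactly as in the proof of Theorem \ref{thm_rate-frac}.

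The hardest part will be controlling the extra singularity introduced by the delta initial condition. The new factor corresponding to the initial interval $(s_{\sigma(1)}, y_{\sigma(1)}) - (0,0)$ contributes powers of $s_{\sigma(1)}^{H_*-1}$ and, when $j=0$ is the singular index, an additional $s_{\sigma(1)}^{-[2H\wedge(2-H_*)]+\epsilon}$. These must still be integrable against the temporal covariance $\prod_{i=1}^{k}|s_i-r_i|^{2H-2}$ on $[0,t]^{2k}$; one expects a $t^{-H_*}$-type blow-up in the final implicit constant, consistent with \eqref{eq_est-chaos-delta}. The arbitrary small slack $\epsilon>0$ in the H\"older exponent is exactly the room needed to keep all the beta-function integrals finite and to retain a Mittag-Leffler-type tail estimate that grows at most exponentially in $t$.
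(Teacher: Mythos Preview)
Your overall strategy---chaos expansion for $u_h$ via the bridge-to-walk identity \eqref{joint-rwb}, inductive comparison of kernels, then summation via Hardy--Littlewood--Sobolev and Mittag--Leffler bounds---is exactly the paper's, and your treatment of the $0$-th chaos is correct. The gap is in the specific inductive bound you claim, and it shows up already at the base case $k=1$.

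For $k=1$ you must control $\big\|p_{t-s_1}(x-\cdot)\,p_{s_1}(\cdot) - (4h)^{-1}G(\ks_1,\cdot)G(\ks_0,\cdot)\big\|_{\cH_*}^2$. Lemma \ref{lmm-frac-g-p} handles a \emph{single} factor, but to separate the product you are forced to take a sup-norm on one factor, e.g.\ $|p_{t-s_1}(x-y)|\lesssim (t-s_1)^{-1/2}$, yielding $(t-s_1)^{-1}$ rather than the $(t-s_1)^{H_*-1}$ you write; for $H=\tfrac12$ this borderline exponent is not integrable in time and the argument fails. The paper resolves this with a dedicated product estimate (Lemma \ref{lmm-frac-g-p-delta}), proved using the identity $p_{t-s}(x-y)p_s(y)=p_{s(t-s)/t}\big(\tfrac{t-s}{t}x-y\big)p_t(x)$ from Lemma \ref{lmm_heat-fac}, a case analysis on whether $s$ and $t-s$ exceed $h$, and the $\epsilon$-trading trick $h^{(2H+H_*-1)\wedge 1-\epsilon/2}\le (t-s)^{\epsilon/2}h^{(2H+H_*-1)\wedge 1-\epsilon}$ when $h\le t-s$. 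The resulting base-case bound is $\big(s_1(t-s_1)\big)^{-[2H\wedge(2-H_*)]+\epsilon/2}$ times the $h$-power---more singular than yours but integrable---and it propagates through the induction as a separate ``second-type'' term in Lemma \ref{prop_f-g-delta}. Relatedly, your ``first-type'' terms (for $j\ge 1$) should carry an additional factor $s_j^{-H_*}$, which comes from \eqref{eq_est-chaos-delta} when you bound $\big\|f_{k-1}^{(\delta),\,s_k,y_k}\big\|_{\cH_*^{\otimes(k-1)}}$ uniformly in $y_k$ in the $J_2$ step. With these two corrections your plan goes through.
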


\begin{remark}
Though inequalities \eqref{ieq_rate-frac} and \eqref{ieq_rate-frac-delta} share the same form, they differ in their implicit constants. Specifically, the constant in \eqref{ieq_rate-frac} can be chosen uniformly for $t$ in any compact interval $[0,T]$. By contrast, because the solution to \eqref{pam} with a delta initial condition blows up as $t \downarrow 0$, one must restrict $t$ away from $0$ to make \eqref{ieq_rate-frac-delta} hold, and accordingly, the implicit constant in \eqref{ieq_rate-frac-delta} diverges to $\infty$ as $t \downarrow 0$.
\end{remark}

\subsection{Proof of Theorem \ref{thm_rate-delta}}
In this subsection, we provide the proof of Theorem \ref{thm_rate-delta}, which follows from the following lemmas. The proofs can be found in Subsection \ref{ss_prf-lmm-delta}.

\begin{lemma}\label{lmm-chaos-h-delta}
 Let $u_h = \{u_h (m, n) \colon (m, n) \in \bZ_{> 0} \times \bZ\}$ be given by \eqref{fk_h_X-delta}. Then, $u_h$ can be represented as the following chaos expansion, 
 \begin{align}\label{chaos-uh-delta}
 u_h (m, n) = & \frac{1}{2 \sqrt{h}} G \big(m, 2n - \tau(m)\big) + \sum_{k = 1}^{\infty} \frac{1}{k!} I_k(g_{h, k}), 
 \end{align}
 where $\tau$ is defined in \eqref{def_tau-t}, and $g_{h, k} \colon [0, mh]^k \times \bR^k \to \bR$ is given by
 \begin{align}\label{def_gk-delta}
  g_{h, k} (\bfs_k, \bfy_k) =  g_{h, k}^{t,x} (\bfs_k, \bfy_k) = & 2^{-(k + 1)} h^{- \frac{k + 1}{2}} \prod_{j = 0}^k G\big(\ks_j, \ky_j \big), 
 \end{align}
 with $G$, $(\ks_0, \dots, \ks_k)$ and $(\ky_0, \dots, \ky_k)$ defined by \eqref{def_Gh}, \eqref{def_ks} and \eqref{def_ky}, respectively, with the parameters $(t, x) \in \bR_{> 0} \times \bR$, such that $\lfloor t \rfloor_h = m$ and $\lfloor x \rfloor_{2 \sqrt{h}} = n$. 
\end{lemma}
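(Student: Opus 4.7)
My plan is to adapt the proof of Lemma \ref{lmm-chaos-h}: expand the Wick renormalised exponential into a Hermite series, convert each Hermite term into a multiple Wiener--It\^o integral via $H_k(W(\phi)) = I_k(\phi^{\otimes k})$ (for $\|\phi\|_{\kH} = 1$), interchange the bridge expectation with $I_k$ by linearity, and finally combine the deterministic prefactor $\frac{1}{2\sqrt{h}}G(m, 2n - \tau(m))$ with the resulting bridge probability to produce the claimed kernel \eqref{def_gk-delta}.

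Concretely, conditional on the bridge $\bS^{(m, \tau(m) - 2n)}$, set
\[
 \Phi_{\bS} \coloneqq \sum_{i = 1}^m (2\sqrt{h})^{-1}\, \cT_h^{(i)} \otimes \cS_h^{(\lfloor \bS^{(m, \tau(m) - 2n)}_{m + 1 - i}/2 \rfloor + n)}.
\]
Applying \eqref{def_wick-herm} to the conditionally-$W$-Gaussian variable $X = W(\Phi_{\bS})$ and invoking the Hermite--integral identity exactly as in the proof of Lemma \ref{lmm-chaos-h} absorbs the normalising factor $\sqrt{\mathrm{Var}^W(X)}^{\,k}$ and yields $\wick(X) = 1 + \sum_{k \geq 1} \frac{1}{k!} I_k(\Phi_{\bS}^{\otimes k})$. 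Taking $\bE^{\bS}$ inside $I_k$ (legitimate since the bridge takes only finitely many values), the $k$-th chaos kernel equals
\[
 \bE^{\bS}[\Phi_{\bS}^{\otimes k}](\bfs_k, \bfy_k) = (2\sqrt{h})^{-k}\, \bP\Bigl(\bigl\lfloor \tfrac{1}{2} \bS^{(m, \tau(m) - 2n)}_{m - \lfloor s_j \rfloor_h}\bigr\rfloor = \lfloor y_j \rfloor_{2 \sqrt{h}} - n,\ j = 1, \dots, k\Bigr).
\]
Using \eqref{joint-rwb} together with the reflection symmetry $G(m, 2n - \tau(m)) = G(m, \tau(m) - 2n) = \bP(S_m = \tau(m) - 2n)$, multiplication by the prefactor converts this conditional probability into the joint unconditional probability of the $k$ intermediate constraints together with the endpoint event $S_m = \tau(m) - 2n$. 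By the Markov property and time-homogeneity of the simple random walk, that joint probability factors as $(2\sqrt{h})^{-(k+1)} \prod_{j = 0}^k G(\ks_j, \ky_j)$, matching \eqref{def_gk-delta}; the extra $j = 0$ factor encodes the segment from the origin to the earliest sample point (absent in the flat case), while the $j = k$ factor absorbs the prefactor. The $k = 0$ chaos term reduces to the prefactor itself, since $\bE^W[\wick(X)] = 1$.

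The main bookkeeping obstacle is the alternating parity encoded by $\tau$: $S_j$ takes values only with the same parity as $j$, so when applying the Markov property one must verify that each $\ky_i$ lies in the valid state space of $S_{\ks_i}$. The definition \eqref{def_ky} of $\ky_i$ --- in particular the parity-correcting term $\tau(\lfloor t \rfloor_h - \lfloor s_{\sigma(i+1)} \rfloor_h) - \tau(\lfloor t \rfloor_h - \lfloor s_{\sigma(i)} \rfloor_h)$ --- is tailored precisely to ensure this compatibility, so once the parity check is carried out the factorisation follows directly and no further analytic input beyond that of Lemma \ref{lmm-chaos-h} is required.
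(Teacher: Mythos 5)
Your proposal matches the paper's own proof in both structure and key steps: expand the Wick exponential into its Hermite/chaos form as in Lemma~\ref{lmm-chaos-h}, identify the $k$-th kernel as $(2\sqrt{h})^{-k}$ times the conditional bridge probability from \eqref{joint-rwb}, convert this to a joint unconditional probability by multiplying with the prefactor $\tfrac{1}{2\sqrt{h}}G(m,2n-\tau(m))$, and factor via the Markov property into the $k+1$ terms of \eqref{def_gk-delta}. One small misattribution in your parenthetical: the prefactor $G(m,2n-\tau(m))$ is not ``absorbed'' by the $j=k$ factor but rather cancels against the conditioning denominator, and the genuinely new factor relative to the flat case is the $j=0$ one, namely the walk-time increment from $m-\lfloor s_{\sigma(1)}\rfloor_h$ to $m$ introduced by the endpoint constraint $S_m=\tau(m)-2n$; this does not affect the validity of your argument, since the explicit factorisation you write down is the correct one.
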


\begin{lemma}\label{lmm-frac-g-p-delta}
 Fix a time horizon $[0, T]$, and fix $h > 0$. Let $p$ denote the heat kernel, and let $G$ be defined in \eqref{def_Gh}. Then, for any $0 < s < t \leq T$, and $x \in \bR$, the following inequality hold with any $\epsilon \in (0, (2H + H_* - 1) \wedge 1)$ and an implicit constant depending on $\epsilon$ and $T$, 
  \begin{align*} 
   &\Big\| \frac{1}{4h} G \big(\lfloor t \rfloor_h - \lfloor s \rfloor_h, 2 (\lfloor x \rfloor_{2 \sqrt{h}} - \lfloor \cdot \rfloor_{2 \sqrt{h}}) \pm \tau(\lfloor t \rfloor_h - \lfloor s \rfloor_h)\big) G \big(\lfloor s \rfloor_h, 2\lfloor \cdot \rfloor_{2 \sqrt{h}} \pm \tau(\lfloor s \rfloor_h)\big) \\
   &\quad - p_{t - s} (x -\cdot) p_{s} (\cdot)\Big\|_{\cH_*}^2
    \lesssim  \big(s (t - s) \big)^{- 2H \wedge (2 - H_*)+ \epsilon/2} h^{(2 H + H_* - 1) \wedge 1 - \epsilon}, 
  \end{align*}
where $\tau(t)$ is given by \eqref{def_tau-t}, and `$\pm$' indicates the inequality holds with either `$+$' or `$-$'. 
 \end{lemma}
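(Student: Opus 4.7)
The strategy extends that of Lemma \ref{lmm-frac-g-p} to a product of two ``heat-kernel versus rescaled random-walk'' factors, one at time scale $s$ and one at time scale $t-s$. Denote by $A_h(y)$ and $A(y) = p_{t-s}(x-y)$ the discrete and continuous $(t-s)$-factors (with the appropriate alternating sign $\pm \tau(\lfloor t\rfloor_h - \lfloor s\rfloor_h)$ built in), and similarly $B_h, B = p_s$ for the $s$-factors. The telescoping identity
\[
A_h B_h - AB = (A_h - A) B_h + A (B_h - B)
\]
and the triangle inequality in $\cH_*$ split the quantity to estimate into two pieces related by swapping the roles of $s$ and $t - s$.

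Before attacking the main regime I would dispose of the degenerate case $\min(s, t-s) < h$. Using Lemma \ref{lmm_g-h1} on each $G$-factor together with Lemma \ref{lmm_inn_heat} on each heat kernel, combined with the fact that for nonnegative $f, g$ and nonnegative covariance $\gamma_*$ one has $\|fg\|_{\cH_*} \leq \|g\|_\infty \|f\|_{\cH_*}$, yields a crude bound of order $(s \wedge (t-s))^{H_* - 1}$, which is absorbed by the claimed right-hand side in that degenerate regime since $H_* - 1 \leq -[2H \wedge (2-H_*)] + \epsilon/2$ (using $2H \geq 1$).

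For the main regime $\min(s, t - s) \geq h$, I would mirror the near/far splitting of Lemma \ref{lmm-frac-g-p} on each summand. For $\|(A_h - A) B_h\|_{\cH_*}^2$, introduce $D_1 = \{y : |x - y| \leq 4 h^{1/2 - \alpha}(t-s)^\alpha\}$ and its complement $D_2$, where $\alpha$ is tuned analogously to the proof of Lemma \ref{lmm-frac-g-p} but with $\epsilon$ replaced by $\epsilon/2$ to leave room for the second summand. On $D_1$ use the pointwise estimate $|A_h - A| \lesssim \sqrt h / (t-s)$ from Corollary \ref{coro_g-p}, and absorb $B_h$ through the pointwise bound $\|B_h\|_\infty \lesssim s^{-1/2}$ coming from the local limit theorem (Lemma \ref{lmm_g-p}); the $\cH_*$-pairing on $D_1 \times D_1$ is then bounded by $\lesssim h (t-s)^{-2} s^{-1} |D_1|^{2H_*}$. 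On $D_2$ use Bernstein's inequality (Lemma \ref{tail-binom}) on $A_h$ and Gaussian tails (Lemma \ref{tail-normal}) on $A$, again factoring out $\|B_h\|_\infty$. Handling $\|A (B_h - B)\|_{\cH_*}^2$ by the symmetric argument (swap $s \leftrightarrow t - s$ and $0 \leftrightarrow x$) and summing then produces the claimed $(s(t-s))^{-[2H \wedge (2-H_*)] + \epsilon/2} h^{(2H + H_* - 1) \wedge 1 - \epsilon}$.

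The principal obstacle is controlling $\cH_*$-norms of pointwise products: the kernel $|y - z|^{2H_* - 2}$ does not admit a clean $\|\cdot\|_\infty \cdot \|\cdot\|_{\cH_*}$ decoupling for sign-indefinite functions. The saving grace is that in each summand one factor (namely $B_h$ or $A$) is nonnegative and probability-like, so the inequality
\[
\|fg\|_{\cH_*}^2 \leq \|g\|_\infty^2 \int\!\!\int |f(y)|\,|f(z)|\,|y-z|^{2H_*-2}\,dy\,dz
\]
is available when $\gamma_* \geq 0$; the fractional integral of $|f|$ is then estimated by the same near/far localisation used in Lemma \ref{lmm-frac-g-p}. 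An alternative route I would pursue in parallel is to first invoke Lemma \ref{lmm_heat-fac} to write $p_{t-s}(x-y) p_s(y) = p_t(x)\, p_{s(t-s)/t}(y - sx/t)$ and use the Markov property on the discrete side to factor the product of $G$'s as a constant in $y$ times a random-walk-bridge mass function, reducing everything to a single-kernel estimate at the effective time $s(t-s)/t$ plus a constant-factor error from Corollary \ref{coro_g-p}; this is conceptually cleaner but requires a local limit theorem for random-walk bridges in place of Corollary \ref{coro_g-p}, which I expect to be the chief additional hurdle in that approach.
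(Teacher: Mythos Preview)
Your overall strategy---telescope $A_hB_h - AB$ into two pieces and pull out the $L^\infty$-norm of one factor to reduce to a single-kernel estimate---is exactly the paper's approach. The paper uses the variant $A_h(B_h-B) + (A_h-A)B$ rather than your $(A_h-A)B_h + A(B_h-B)$, but this is immaterial. Where you diverge is in execution: in the main regime $s\wedge(t-s)\ge h$ the paper does \emph{not} rerun the near/far localisation. It simply bounds, e.g., $\|A_h(B_h-B)\|_{\cH_*}^2$ by $\|A_h\|_\infty^2\,\|\,|B_h-B|\,\|_{\cH_*}^2$ (using Corollary~\ref{coro_g-p} to control $\|A_h\|_\infty$ via $p_{t-s}$ plus an $O(\sqrt{h}/(t-s))$ error) and then invokes Lemma~\ref{lmm-frac-g-p} as a black box on $\|B_h-B\|_{\cH_*}^2$. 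Your inline re-derivation would work but is redundant.

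Your degenerate-case paragraph, however, contains a genuine error. The crude triangle-inequality bound on $\|A_hB_h\|_{\cH_*}^2$ is \emph{not} of order $(s\wedge(t-s))^{H_*-1}$: pulling out $\|g\|_\infty$ costs an extra factor of $(s\vee(t-s))^{-1}$ (or $h^{-1}$ when the corresponding floor vanishes), so the bound is rather $(s\vee(t-s))^{-1}(s\wedge(t-s))^{H_*-1}$ or $h^{H_*-2}$ in the worst case. Relatedly, the inequality you invoke, $H_*-1 \le -[2H\wedge(2-H_*)] + \epsilon/2$, is false (it would force $(2H+H_*-1)\wedge 1 \le \epsilon/2$, contradicting $\epsilon < (2H+H_*-1)\wedge 1$). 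The paper avoids this by \emph{not} using the full triangle inequality in the degenerate cases: when $t-s<h$ it still keeps the difference $B_h-B$ intact and applies Lemma~\ref{lmm-frac-g-p} to it (with $\epsilon/2$ in place of $\epsilon$), bounding only $A_h$ crudely by $h^{-1/2}$; the resulting $h^{-1}$ is then traded for $(t-s)^{-1+\epsilon/2}h^{-\epsilon/2}$ using $t-s<h$. A similar splitting into three cases ($t-s<h$; $s<h\le t-s$; $h\le s\wedge(t-s)$) for each of the two telescoped pieces is what makes the argument close. Your alternative bridge-factorisation route is not pursued in the paper.
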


\begin{lemma}\label{prop_f-g-delta}
 Let $k \in \bZ_{>0}$, and let $(s_1, \dots, s_k) \in [0, t]_<^k$. Let $f_k = f_k^{t, x}$ and $g_{h, k} = g_{h, k}^{t, x}$ be given as in \eqref{def_fk-delta} and \eqref{def_gk-delta} with some $h > 0$, respectively. Then, with any $\epsilon \in (0, 2H + H_* - 1)$, there exists a constant $C > 0$, such that
\begin{align}\label{f-g-frac-delta}
  \big\| f_k (\bfs_k, \cdot) & - g_{h, k} (\bfs_k, \cdot) \big\|_{\cH_*^{\otimes k}}^2 
   \leq C^k h^{(2H + H_* - 1) \wedge 1 - \epsilon} \nonumber\\
   & \times \bigg(\sum_{j = 1}^k \Big[ (s_{j + 1} - s_j)^{-[2H \wedge (2 - H_*)]+ \epsilon} s_j^{-H_*}\prod_{\substack{0 \leq i \leq k \\ i \neq j}} (s_{i + 1} - s_i)^{H_* - 1} \Big] \nonumber\\
& \qquad + \prod_{i = 2}^k (s_{i + 1} - s_i)^{H_* - 1} (s_2 - s_1)^{-[2H \wedge (2 - H_*)] + \epsilon/2} s_1^{-[2H \wedge (2 - H_*)] + \epsilon/2} \bigg). 
 \end{align}
\end{lemma}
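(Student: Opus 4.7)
The plan is to adapt the induction-on-$k$ argument that underlies the proof of Lemma \ref{prop_l2-f-g} (and its coloured-noise extension, Lemma \ref{prop_frac-f-g}) so as to track the extra factor $p_{s_1}(y_1)$, together with its discrete analogue $(2\sqrt{h})^{-1} G(\ks_0, \ky_0)$, that appears in \eqref{def_fk-delta} and \eqref{def_gk-delta} because of the delta initial condition at the origin. Throughout I would set $s_0 \coloneqq 0$, so that the products defining $f_k$ and $g_{h,k}$ both run over $i = 0, 1, \ldots, k$.

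For the inductive step $k \ge 2$, I would split the $\cH_*^{\otimes k}$-norm of $f_k - g_{h,k}$ as $J_1 + J_2$, peeling off the factor at $i = k$ exactly as in the proof of Lemma \ref{prop_l2-f-g}. The term $J_1$ isolates the difference $f_{k-1} - g_{h,k-1}$ in the first $k-1$ variables (now with endpoint $(s_k, y_k)$ in place of $(t, x)$); the inner product in the $y_k$-direction is controlled by Lemmas \ref{lmm_inn_heat} and \ref{lmm_g-h1}, and the induction hypothesis then supplies all entries of the sum over $j = 1, \ldots, k-1$ in \eqref{f-g-frac-delta} as well as the additive second summand. The term $J_2$ applies the single-factor $\cH_*$-estimate of Lemma \ref{lmm-frac-g-p} to the $i = k$ factor and the delta-initial-condition chaos bound \eqref{eq_est-chaos-delta} to the remaining $f_{k-1}$; the $s_k^{-H_*}$ arising from the $t^{-H_*}$ in \eqref{eq_est-chaos-delta} is precisely the missing $j = k$ entry of the first sum in \eqref{f-g-frac-delta}.

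The main obstacle is the base case $k = 1$, which also encapsulates the essential new difficulty relative to the flat-initial-condition case. Here a naive decomposition that treats the $i = 0$ and $i = 1$ factors separately fails because the Gaussian $p_{s_1}(y_1)$ becomes singular as $s_1 \downarrow 0$, so no single-factor $\cH_*$-estimate on $p_{s_1}(\cdot)$ alone can yield an integrable weight in $s_1$. The remedy is to estimate the pair $p_{s_1}(\cdot) p_{s_2 - s_1}(y_2 - \cdot)$ against its discrete counterpart jointly, which is exactly the content of Lemma \ref{lmm-frac-g-p-delta}, applied with the parameters $(s, t, x)$ there replaced by $(s_1, t, x)$. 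The symmetric weight $s_1^{-[2H \wedge (2-H_*)] + \epsilon/2}(s_2 - s_1)^{-[2H \wedge (2-H_*)] + \epsilon/2}$ this produces, combined with the $\cH_*^{\otimes(k-1)}$-bound on the remaining continuous factors from Lemma \ref{lmm_inn_heat}, delivers the additive second summand in \eqref{f-g-frac-delta}. Once this base case is in hand, the inductive machinery is essentially the same bookkeeping as in the flat-initial-condition case, and assembling the contributions gives the claimed bound.
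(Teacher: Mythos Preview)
Your proposal is correct and follows essentially the same approach as the paper: induction on $k$, with the base case $k=1$ handled directly by Lemma \ref{lmm-frac-g-p-delta}, and the inductive step obtained by peeling off the $i=k$ factor into a $J_1 + J_2$ decomposition where $J_1$ invokes the induction hypothesis together with Lemma \ref{lmm_g-h1}, and $J_2$ combines Lemma \ref{lmm-frac-g-p} with the chaos bound \eqref{eq_est-chaos-delta}. Your identification of the new difficulty (the singularity of $p_{s_1}$ as $s_1\downarrow 0$) and its resolution via the joint two-factor estimate of Lemma \ref{lmm-frac-g-p-delta} is exactly the point of the paper's argument.
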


Now, we are ready to present the proof of Theorem \ref{thm_rate-delta}. 

\begin{proof}[Proof of Theorem \ref{thm_rate-delta}]
As in the case of the flat initial condition, we prove the theorem for  $p = 2$, and leave the extension to general $p > 2$ to the reader.
Due to Theorem \ref{thm_eu}, Lemmas \ref{lmm_hls}, \ref{lmm-chaos-h-delta}, and \ref{prop_f-g-delta}, 
\begin{align*}
 & \bE \big[ |u_h ( \lfloor t \rfloor_h ,  \lfloor x \rfloor_{2 \sqrt{h}}) - u(t, x) |^2 \big] 
 \leq h^{(2 H + H_* - 1) \wedge 1 - \epsilon} \sum_{k = 1}^{\infty} C^k \big(J_{k, 1} (t) + J_{k, 2}(t)\big)^{H}, 
\end{align*}
where by applying \cite[Lemma 4.5]{ejp-15-hu-huang-nualart-tindel}, 
\begin{align*}
 J_{k, 1}(t) \coloneqq & \int_{[0, t]_<^k}\sum_{j = 1}^k (s_{j + 1} - s_j)^{- (1 \wedge \frac{2 - H_*}{2 H})+ \frac{\epsilon}{2H}} s_j^{- \frac{H_*}{2H}} \prod_{\substack{0 \leq i \leq k \\ i \neq j}} (s_{i + 1} - s_i)^{\frac{H_* - 1}{2H}} d \bfs_k\\
\leq & \frac{C^k t^{- \frac{H_* \wedge (2 - 2H)}{2 H} + \frac{(k - 1) (2 H + H_* - 1)}{2H} + \frac{\epsilon}{2H}}}{\Gamma \big(1 - \frac{H_* \wedge (2 - 2H)}{2 H} + \frac{(k - 1) (2 H + H_* - 1)}{2H} + \frac{\epsilon}{2H} \big)}, 
\end{align*}
and
\begin{align*}
 J_{k, 2}(t) \coloneqq & \int_{[0, t]_<^k} \prod_{i = 2}^k (s_{i + 1} - s_i)^{\frac{H_* - 1}{2 H}} (s_2 - s_1)^{- (1\wedge \frac{2 - H_*}{2H}) + \frac{\epsilon}{4H}} s_1^{- (1 \wedge \frac{2 - H_*}{2 H}) + \frac{\epsilon}{4 H}} d \bfs_k\\
\leq & \frac{C^k t^{- \frac{H \wedge (2 - H - H_*)}{ H} + \frac{(k - 1) (2 H + H_* - 1)}{2H} + \frac{\epsilon}{2 H}}}{\Gamma \big(1 - \frac{H \wedge (2 - H - H_*)}{ H} + \frac{(k - 1) (2 H + H_* - 1)}{2H} + \frac{\epsilon}{2 H}\big)} . 
\end{align*}
Then, this theorem is a result of upper bounds for the Mittag-Leffler functions. 
\end{proof}

\subsection{Proof of Lemmas \ref{lmm-chaos-h-delta}--\ref{prop_f-g-delta}}\label{ss_prf-lmm-delta}

\begin{proof}[Proof of Lemma \ref{lmm-chaos-h-delta}]
Analogously to the proof of Lemma \ref{lmm-chaos-h}, and taking account of \eqref{joint-rwb} and \eqref{fk_h_X-delta}, we can show that the integral kernels in \eqref{chaos-uh-delta} can be represented as
 \begin{align*} 
 g_{h, k} (\bfs_k, \bfy_k)  
 = &  \frac{1}{2 \sqrt{h}} G \big(m, n - \tau (m)\big) 2^{-k} h^{- \frac{k}{2}} \bP \bigg(\Big\lfloor\frac{S_{m - \lfloor s_j \rfloor_h} }{2}\Big\rfloor = \lfloor y_j \rfloor_{2 \sqrt{h}} - n, j = 1, \dots, k \bigg| \Big\lfloor\frac{S_{m} }{2}\Big\rfloor = - n\bigg) \\
  = & 2^{-(k + 1)}h^{- \frac{k + 1}{2}} 
   \bP \Big( S_{m - \lfloor s_j \rfloor_h} = 2\big(\lfloor y_j \rfloor_{2 \sqrt{h}} - n\big) + \tau \big(m - \lfloor s_j \rfloor_h \big), j = 1, \dots, k ;\\
   &\hspace{6.7em} \text{and } S_m = \tau (m) - 2n\Big). 
   \end{align*}
Recall that $\ks_j$'s and $\ky_j$'s are defined as in \eqref{def_ks} and \eqref{def_ky}, respectively, and $S$ is a homogeneous Markovian chain with symmetric transition probabilities. It is easy to verify that $g_{h, k}$ in the above equation can also be expressed as in \eqref{def_gk-delta}. The proof of this lemma is complete. 
\end{proof}

\begin{proof}[Proof of Lemma \ref{lmm-frac-g-p-delta}]
Without loss of generality, assume that both $\lfloor t \rfloor_h - \lfloor s \rfloor_h$ and $\lfloor s \rfloor_h$ are even integers. To prove this lemma, it suffices to estimate 
 \begin{align*}
  L_1\coloneqq & \Big\|\frac{1}{2\sqrt{h}} G \big(\lfloor t \rfloor_h - \lfloor s \rfloor_h, 2 (\lfloor x \rfloor_{2 \sqrt{h}} - \lfloor \cdot \rfloor_{2 \sqrt{h}})\big) \Big(\frac{1}{2\sqrt{h}} G \big(\lfloor s \rfloor_h, 2\lfloor \cdot \rfloor_{2 \sqrt{h}}\big) - p_{s} (\cdot) \Big)\Big\|_{\cH_*}^2, 
 \end{align*}
 and
 \begin{align*}
  L_2 \coloneqq & \Big\| \Big(\frac{1}{2\sqrt{h}} G \big(\lfloor t \rfloor_h - \lfloor s \rfloor_h, 2 (\lfloor x \rfloor_{2 \sqrt{h}} - \lfloor \cdot \rfloor_{2 \sqrt{h}})\big) - p_{t - s} (x - \cdot)\Big)  p_{s} (\cdot) \Big\|_{\cH_*}^2.
 \end{align*}

\noindent \textbf{Estimate for $L_1$.~} In order to estimate $L_1$, we need to analyse the following cases. 

 \textit{Case i)~} Assume $t - s < h$. Then, 
it follows from Lemma \ref{lmm-frac-g-p} and the fact $G \leq 1$ as a probability mass function, that
\begin{align*}
 L_1 \lesssim h^{-1} \times s^{- [2H \wedge (2 - H_*)] + \epsilon/2} h^{(2 H + H_* - 1) \wedge 1 - \epsilon/2}, 
\end{align*}
with $\epsilon \in (0, (2H + H_* - 1) \wedge 1)$. Recall the assumption that $t - s < h$. Thus, 
\begin{align*}
 L_1 \lesssim s^{- [2H \wedge (2 - H_*)] + \epsilon/2} (t - s)^{- 1 + \epsilon/2} h^{(2 H + H_* - 1) \wedge 1 - \epsilon}. 
\end{align*}

 \textit{Case ii)~} In case $s < h \leq t - s$, we can write
\begin{align*}
 L_1 \lesssim L_{1, 1} + L_{1, 2}, 
\end{align*}
where
 \begin{align*}
  L_{1, 1} \coloneqq & \Big\|\frac{1}{2\sqrt{h}} G \big(\lfloor t \rfloor_h - \lfloor s \rfloor_h, 2 (\lfloor x \rfloor_{2 \sqrt{h}} - \lfloor \cdot \rfloor_{2 \sqrt{h}})\big) \times \frac{1}{2\sqrt{h}} G \big(\lfloor s \rfloor_h, 2\lfloor \cdot \rfloor_{2 \sqrt{h}}\big)\Big)\Big\|_{\cH_*}^2,  
  \end{align*}
 and
  \begin{align*}
  L_{1, 2} \coloneqq & \Big\|\frac{1}{2\sqrt{h}} G \big(\lfloor t \rfloor_h - \lfloor s \rfloor_h, 2 (\lfloor x \rfloor_{2 \sqrt{h}} - \lfloor \cdot \rfloor_{2 \sqrt{h}})\big) p_{s} (\cdot) \Big)\Big\|_{\cH_*}^2.
 \end{align*}
Due to Corollary \ref{coro_g-p} and Lemma \ref{lmm_g-h1} and the fact that 
\[
G \big(\lfloor s \rfloor_h, 2\lfloor x \rfloor_{2 \sqrt{h}}\big) = G \big(0, 2\lfloor x \rfloor_{2 \sqrt{h}}\big) = \1_{(- 2\sqrt{h}, 2 \sqrt{h})} (x), 
\]
we have
\begin{align*}
 L_{1, 1} \lesssim & p_{t - s} (0)^2 \Big\| \frac{1}{2\sqrt{h}} G \big(0, 2\lfloor \cdot \rfloor_{2 \sqrt{h}}\big)\Big)\Big\|_{\cH_*}^2 + \frac{h}{(t - s)^2} \Big\| \frac{1}{2\sqrt{h}} G \big(0, 2\lfloor \cdot \rfloor_{2 \sqrt{h}}\big)\Big)\Big\|_{\cH_*}^2 \\
 \lesssim & (t - s)^{-1} h^{H_* - 1} + (t - s)^{-2} h^{H_*}. 
\end{align*} 
Recall the assumption that $s < h \leq t - s$. It follows that
\begin{align*}
 L_{1, 1} \lesssim (t - s)^{-1} h^{H_* - 1} \lesssim s^{- [2H \wedge (2 - H_*)] + \epsilon/2} (t - s)^{- 1 + \epsilon/2}h^{(2 H + H_* - 1) \wedge 1 - \epsilon}. 
\end{align*}
In addition, by applying Corollary \ref{coro_g-p} once again, we deduce that
\begin{align*}
 L_{1, 2} \lesssim & \frac{h}{(t - s)^2}\| p_s\|_{\cH_*}^2 + \|p_{t - s} (x - \cdot) p_s(\cdot)\|_{\cH_*}^2 . 
 \end{align*}
Thus, Lemmas \ref{lmm_inn_heat} and \ref{lmm_heat-fac} imply that
 \begin{align*}
 L_{1, 2} \lesssim & s^{H_* - 1} (t - s)^{-2} h + p_{t}(x)^2 \Big\|p_{\frac{s (t - s)}{t}} \Big(\frac{t - s}{t} x - \cdot\Big)\Big\|_{\cH_*}^2\\
 \lesssim & s^{H_* - 1} (t - s)^{-2} h + s^{H_* - 1} (t - s)^{H_* - 1} t^{- H_*}\\
  \lesssim & s^{- [2H \wedge (2 - H_*)] + \epsilon/2} (t - s)^{- 1 + \epsilon/2} h^{(2 H + H_* - 1) \wedge 1 - \epsilon}. 
\end{align*}
because $s < h \leq t - s < t$. 
Hence, we conclude that 
\begin{align*}
 L_1 \lesssim s^{- [2H \wedge (2 - H_*)] + \epsilon/2} (t - s)^{- 1 + \epsilon/2} h^{(2 H + H_* - 1) \wedge 1 - \epsilon}. 
\end{align*}

 \textit{Case iii)~} When $s \wedge (t - s) \geq h$, as a result of Corollary \ref{coro_g-p} and Lemma \ref{lmm-frac-g-p}, 
\begin{align*}
 L_1 \lesssim L_{1, 1} + L_{1, 2}, 
\end{align*}
where
\begin{align*}
 L_{1, 1} \coloneqq & \frac{h}{(t - s)^2} \Big\|\frac{1}{2\sqrt{h}} G \big(\lfloor s \rfloor_h, 2(\lfloor \cdot \rfloor_{2 \sqrt{h}} \big) - p_{s} (\cdot)\Big\|_{\cH_*}^2\\
\lesssim & h (t - s)^{-2} \times s^{- [2H \wedge (2 - H_*)] + \epsilon/2} h^{(2 H + H_* - 1) \wedge 1 - \epsilon/2} \\
\lesssim & s^{- [2H \wedge (2 - H_*)] + \epsilon/2} (t - s)^{-1 + \epsilon/2} h^{(2 H + H_* - 1) \wedge 1 - \epsilon}, 
\end{align*}
and
\begin{align*}
 L_{1, 2} \coloneqq & \Big\| p_{t - s} (x - \cdot) \times \Big|\frac{1}{2\sqrt{h}} G \big(\lfloor s \rfloor_h, 2(\lfloor \cdot \rfloor_{2 \sqrt{h}} \big) - p_{s} (\cdot)\Big| \Big\|_{\cH_*}^2\\
\lesssim & (t - s)^{-1} \times s^{- [2H \wedge (2 - H_*)] + \epsilon/2} h^{(2 H + H_* - 1) \wedge 1 - \epsilon/2} \\
\lesssim & s^{- [2H \wedge (2 - H_*)] + \epsilon/2} (t - s)^{-1 + \epsilon/2} h^{(2 H + H_* - 1) \wedge 1 - \epsilon}. 
\end{align*}
Thus, 
\[
 L_1 \lesssim s^{- [2H \wedge (2 - H_*)] + \epsilon/2} (t - s)^{-1 + \epsilon/2} h^{(2 H + H_* - 1) \wedge 1 - \epsilon}, 
\]
when $s \wedge (t - s) \geq h$. Notice that under the assumption $H \in [\frac{1}{2}, 1)$ and $H_* \in [\frac{1}{2}, 1)$ ensures $2H \wedge (2 - H_*) \geq 1$. Hence, when $0 < s < t \leq T$, it holds for all Cases i)--iii) that
\begin{align}\label{ieq_g-p-delta-1}
 L_1 \lesssim T^{2H \wedge (2 - H_*) - 1} \big(s (t - s) \big)^{- 2H \wedge (2 - H_*)+ \epsilon/2} h^{(2 H + H_* - 1) \wedge 1 - \epsilon}. 
\end{align}
\noindent \textbf{Estimate for $L_2$.~} The estimate for $L_2$ is similar, so we outline only the crucial steps. 

\textit{Case i)~} In case $t - s < h$, due to Lemma \ref{lmm_heat-fac} we see that
\begin{align*}
 L_2 \lesssim & \Big\|\frac{1}{2\sqrt{h}} G \big(\lfloor t \rfloor_h - \lfloor s \rfloor_h, 2(\lfloor \cdot \rfloor_{2 \sqrt{h}} \big) p_{s} (\cdot) \Big\|_{\cH_*}^2 + \big\| p_{t - s} (x - \cdot) p_{s} (\cdot) \big\|_{\cH_*}^2\\
 \lesssim & \big(h^{-1} \|p_s\|_{\cH_*}^2\big) \wedge \Big(s^{-1}\Big\|\frac{1}{2\sqrt{h}} G \big(\lfloor t \rfloor_h - \lfloor s \rfloor_h, 2(\lfloor \cdot \rfloor_{2 \sqrt{h}} \big)\Big\|_{\cH_*}^2\Big) + s^{H_* - 1} (t - s)^{H_* - 1} t^{- H_*}. 
\end{align*}
Then, as a result of Lemmas \ref{lmm_g-h1} and \ref{lmm_inn_heat}, we can write
\begin{align*}
 L_2 \lesssim & \begin{dcases}
 h^{-1} s^{H_* - 1} + s^{- 1 + \epsilon/2} (t - s)^{H_* - 1 - \epsilon/2}, & s < h, \\
 s^{-1} (t - s)^{H_* - 1} + s^{- 1 + \epsilon/2} (t - s)^{H_* - 1 - \epsilon/2}, & s \geq h. 
 \end{dcases}
\end{align*}
In both cases, with the assumption $t - s < h$, it holds that
\[
 L_2 \lesssim s^{-1 + \epsilon/2} (t - s)^{- 2H \wedge (2 - H_*)+ \epsilon/2} h^{(2 H + H_* - 1) \wedge 1 - \epsilon}. 
\]

 \textit{Case ii)~} If $s < h \leq t - s$, simply using Corollary \ref{coro_g-p}, we can write
\begin{align*}
 L_2 \lesssim h (t - s)^{-2} \|p_s (\cdot)\|_{\cH_*}^2
 \lesssim s^{- [2H \wedge (2 - H_*)] + \epsilon/2} (t - s)^{- 1 + \epsilon/2} h^{(2 H + H_* - 1) \wedge 1 - \epsilon}. 
\end{align*}

 \textit{Case iii)~} When $h \leq s \wedge(t - s)$, it follows from Lemma \ref{lmm-frac-g-p} that
\begin{align*}
 L_2 \lesssim & s^{-1} \Big\| \frac{1}{2\sqrt{h}} G \big(\lfloor t \rfloor_h - \lfloor s \rfloor_h, 2 (\lfloor x \rfloor_{2 \sqrt{h}} - \lfloor \cdot \rfloor_{2 \sqrt{h}})\big) - p_{t - s} (x - \cdot)\Big\|_{\cH_*}^2\\
 \lesssim & s^{-1 + \epsilon/2} (t - s)^{- 2H \wedge (2 - H_*)+ \epsilon/2} h^{(2 H + H_* - 1) \wedge 1 - \epsilon}. 
\end{align*}
Hence, we conclude that under the assumption of Lemma \ref{lmm-frac-g-p-delta}, the next inequality holds, 
 \begin{align}\label{ieq_g-p-delta-2}
 L_2 \lesssim  \big(s (t - s) \big)^{- 2H \wedge (2 - H_*)+ \epsilon/2} h^{(2 H + H_* - 1) \wedge 1 - \epsilon}. 
\end{align}
The proof of this lemma is complete as a consequence of inequalities \eqref{ieq_g-p-delta-1} and \eqref{ieq_g-p-delta-2}. 

\end{proof}

\begin{proof}[Proof of Lemma \ref{prop_f-g-delta}]
If $k = 1$, this lemma is ensured by Lemma \ref{lmm-frac-g-p-delta}. For general $k \geq 2$, similarly to the proof of Lemma \ref{prop_l2-f-g}, we decompose 
\begin{align*}
 \big\| f_k (\bfs_k, \cdot) & - g_{h, k} (\bfs_k, \cdot) \big\|_{\cH_*^{\otimes k}}^2 \lesssim J_1 + J_2 , 
\end{align*}
 where, by Cauchy-Schwarz's inequality, 
 \begin{align*}
  J_1 \coloneqq \int_{\bR^2} d y_k d z_k & \frac{1}{4h} |y_k - z_k|^{2H_* - 2}G (\ks_k, \ky_k) G (\ks_k, \kz_k) \big\| f_{k - 1}^{s_k, y_k} (\bfs_{k - 1}, \cdot) - g_{k - 1, h}^{s_k, y_k} (\bfs_{k - 1}, \cdot)\big\|_{\cH_*^{k - 1}} \\
& \times \big\| f_{k - 1}^{s_k, z_k} (\bfs_{k - 1}, \cdot) - g_{k - 1, h}^{s_k, z_k} (\bfs_{k - 1}, \cdot)\big\|_{\cH_*^{k - 1}}, 
 \end{align*} 
 and
 \begin{align*}
  J_2 \coloneqq & \int_{\bR^2} d y_k d z_k |y_k - z_k|^{2H_* - 2} \big\|f_{k - 1}^{s_k, y_k}(\bfs_{k - 1}, \cdot) \big\|_{\cH_*^{\otimes (k - 1)}} \big\|f_{k - 1}^{s_k, z_k}(\bfs_{k - 1}, \cdot) \big\|_{\cH_*^{\otimes (k - 1)}} \\
& \qquad \times \Big( \frac{1}{2\sqrt{h}} G (\ks_k, \ky_k) - p_{t - s_{ k}} (x - y_{k}) \Big) \Big( \frac{1}{2\sqrt{h}} G (\ks_k, \kz_k) - p_{t - s_{ k}} (x - z_{ k}) \Big) . 
 \end{align*} 
Then, it follows from the induction hypothesis and Lemma \ref{lmm_g-h1} that
\begin{align}\label{f-g-frac-delta-1}
 J_1 \leq C^{k} & h^{(2H + H_* - 1) \wedge 1 - \epsilon} \bigg( \sum_{j = 1}^{k - 1} \Big[ (s_{j + 1} - s_j)^{-[2H \wedge (2 - H_*)]+ \epsilon} s_j^{-H_*}\prod_{\substack{1 \leq i \leq k \\ i \neq j}} (s_{i + 1} - s_i)^{H_* - 1} \Big] \nonumber \\
& \qquad \qquad + \prod_{i = 2}^k (s_{i + 1} - s_i)^{H_* - 1} (s_2 - s_1)^{-[2H \wedge (2 - H_*)] + \epsilon/2} s_1^{-[2H \wedge (2 - H_*)] + \epsilon/2} \bigg). 
\end{align}
Additionally, Lemma \ref{lmm-frac-g-p} and inequality \eqref{eq_est-chaos-delta} suggests that
\begin{align}\label{f-g-frac-delta-2}
  J_2 \leq C^{k} h^{(2H + H_* - 1) \wedge 1 - \epsilon} (t - s_k)^{-[2H \wedge (2 - H_*)]+ \epsilon} s_k^{-H_*} \prod_{i = 0}^{k - 1} (s_{i + 1} - s_i)^{H_* - 1}. 
\end{align}
Then, \eqref{f-g-frac-delta} is a direct result of \eqref{f-g-frac-delta-1} and \eqref{f-g-frac-delta-2}. The proof of this lemma is complete. 
\end{proof}

\section{Another look from the random polymer viewpoint}\label{sec_dpre}
The mathematical model of directed polymers in random environments (DPREs) is introduced to study the behaviour of a polymer chain in a disordered medium. This model has wide-ranging applications across various fields, including statistical physics, condensed matter physics, DNA and protein folding, geophysics, climate modelling, and so on. For a comprehensive treatment of the topic, we refer the reader to the monograph \cite{springer-17-comets}. 

Roughly speaking, the DPRE can be modelled by a simple random walk $S = \{S_m \colon m \in \bZ_{\geq 0}\}$ in a random environment $\omega = \{\omega(m, n) \colon (m, n) \in \bZ_{\geq 0} \times \bZ\}$ satisfying the exponential integrability condition. The model is connected to \eqref{pam} through the `partition function', which acts as a renormalising constant to ensure that the polymer measure is a probability measure. This partition function $Z (\beta) = \{Z_m (\beta) \colon m \in \bZ_{\geq 0}\}$ can be expressed as follows
\begin{align*}
 Z_m(\beta) \coloneqq \bE^S \bigg[\exp \Big(\beta \sum_{1\leq j\leq m} \omega(j, S_j) \Big)\bigg], 
\end{align*}
where $\beta > 0$ represents the inverse temperature. Assuming that $\omega (m, n)$ are i.i.d. random variables, it was showed in \cite{ap-14-alberts-khanin-quastel} that under a diffusive scaling, the rescaled partition function
\begin{align*}
 e^{- m \lambda (\beta m^{-\frac{1}{4}})} Z_m \big(\beta m^{-\frac{1}{4}}\big), 
\end{align*}
converges in distribution to $u(1, 0)$ as $m \to \infty$, where $\lambda (\beta) \coloneqq \log \bE^{\omega} [e^{\beta \omega(1, 1)}]$, and $u$ is the solution to \eqref{pam} driven by space-time white noise with a flat initial condition and the diffusion term replaced by $\sqrt{2}\beta u \diamond \dot{W}$. This problem was further studied in \cite{jems-16-caravenna-sun-zygouras}. In this paper, a generalised Lindeberg principle, building on the work \cite{am-10-mossel-odonnell-krzysztof}, was developed, yielding general conditions for convergence. Recently, similar questions have been investigated with temporally or spatially correlated random environments, and with non-Markovian random walks; cf. \cite{ejp-23-chen-gao, spa-20-rang, ejp-24-rang-song-wang}. 

Since in general the environments in \eqref{eq_cov-wh} is assumed to be only exponentially integrable, one of the challenges in the aforementioned works lies in the (weak) convergence of polynomial chaos to Wiener chaos. However, if we assume the environment to be Gaussian, as discussed in this article, polynomial chaos can be interpreted as Wiener chaos, with an appropriate coupling if necessary. This simplification enables us to bypass the difficulties encountered in existing papers, and obtain some quantitative results. 

Let $S$ be a simple random walk, and let 
$\omega = \{\omega(m, n) \colon (m, n) \in (\bZ_{\geq 0} \times \bZ)^* \}$, 
independent of $S$, be a Gaussian random field, 
where 
\begin{align*}
(\bZ_{\geq 0} \times \bZ)^* \coloneqq \big\{(m, n) \subseteq \bZ_{\geq 0} \times \bZ \colon m + n \text{ is an even integer}. \big\}. 
\end{align*}
The correlation of $\omega$ can be expressed as follows
\begin{align}\label{cor_omega}
 \bE \big[\omega (m_1, n_1) \omega (m_2, n_2)\big] = \Gamma (m_1 - m_2) \Gamma_* \big(\lfloor n_1/2 \rfloor - \lfloor n_2/2 \rfloor \big), 
\end{align}
where $\Gamma$ and $\Gamma_*$ are defined by \eqref{def_gmm-t} and \eqref{def_gmm*-s}, respectively.

Let $h > 0$ and let $W_h$ be a Gaussian random field defined in \eqref{def_wh}. 
Additionally, for any $h > 0$, $m \in \bZ_{> 0}$, $n \in \bZ$, and $k \in \{1, \dots, m\}$, define
\[
 \omega_h ^{(m)}(k, n) \coloneqq 2^{2H_* - 1}h^{\frac{1}{2}(2H + H_* - 1)} \omega \big(m + 1 - k, 2n + \tau (m + 1 - k)\big), 
\]
where $\tau$ is defined in \eqref{def_tau-t}. It can be easily deduced from \eqref{eq_cov-wh} that
$\{\omega_h ^{(m)}(k, n) \colon (k, n) \in \{1, \dots, m\} \times \bZ \}$
shares the same distribution as $\{W_h(k, n) \colon (k, n) \in \{1, \dots, m\} \times \bZ \}$. 

Let  $S$ be a simple random walk, and let $\omega$ be a Gaussian random filed with correlation given by \eqref{cor_omega}, and define the following (Wick renormalised) partition function
\begin{align}\label{def_hat-z}
 \cZ_m \coloneqq \bE^S \bigg[ & \exp \bigg( - 2^{4H_* - 3} m^{1 - 2H - H_*} {\rm Var}^{\omega} \Big( \sum_{1\leq j\leq m} \omega(j, S_j)\Big) \bigg) \nonumber \\ 
 & \times \exp \Big(2^{2H_* - 1} m^{-\frac{1}{2}(2H + H_* - 1)} \sum_{1\leq j\leq m} \omega(j, S_j) \Big)\bigg].
 \end{align}
One can show that
 \begin{align*}
  \cZ_m \overset{\rm (d)}{=} &\bE^S \bigg[ \exp \bigg( \sum_{1\leq j\leq m} W_{1/m} \big( m + 1 - j, \lfloor S_j/2 \rfloor \big) - \frac{1}{2} {\rm Var}^{W} \Big( \sum_{1\leq j\leq m} W_{1/m} \big(m + 1 - j, \lfloor S_j/2 \rfloor \big) \Big) \bigg) \\
 = & u_{1/m}(1, 0), 
\end{align*}
where $u_{1/m}$ is defined in \eqref{fk_h_X}. 

Additionally, for any $p \geq 2$, let $W_p$ denote the $p$-Wasserstein distance on the space of probability measures. Specifically, for any probability measure $\mu$ and $\nu$ on $\bR$, $W_p (\mu, \nu)$ is define by
\[
	W_p (\mu, \nu) \coloneqq \inf \big\{ \|X - Y\|_{L^p (\Omega)} \colon {\rm law} (X) = \mu \quad \text{and} \quad {\rm law} (Y) = \nu \big\};
\]
cf. \cite[Deﬁnition 6.1]{springer-09-villani}. By abuse of notation, we write $W_p (X, Y) \coloneqq W_p ({\rm law} (X), {\rm law} (Y))$. Therefore, with regard to Theorem \ref{thm_rate-frac}, we can immediately state the following theorem. 

\begin{theorem}\label{thm_dprm-flat}
 For every $p \geq 2$, $m \in \bZ_{> 0}$ and $\epsilon \in (0, \frac{1}{2}[(2 H + H_* - 1)\wedge 1])$, with a universal implicit constant, 
 \[
  W_p \big( \cZ_m, u(1, 0)\big) \lesssim m^{ - \frac{1}{2}[(2 H + H_* - 1)\wedge 1] + \epsilon}, 
 \]
where $ \cZ_m$ is given by \eqref{def_hat-z}, $u$ is the solution to \eqref{pam} with a flat initial condition.
\end{theorem}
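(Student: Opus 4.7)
The plan is to reduce Theorem \ref{thm_dprm-flat} directly to Theorem \ref{thm_rate-frac} via the distributional identity $\cZ_m \stackrel{(d)}{=} u_{1/m}(1, 0)$ already recorded in the excerpt, together with the elementary bound $W_p(\mu, \nu) \leq \|X - Y\|_{L^p(\Omega)}$ for any coupling $(X, Y)$ with marginals $\mu$ and $\nu$.

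First, I would produce an explicit coupling. The covariance computation in \eqref{eq_cov-wh} combined with the rescaling built into the definition of $\omega_h^{(m)}$ shows that, for fixed $m$, the Gaussian field
\[
 \big\{\omega_h^{(m)}(k, n) \colon (k, n) \in \{1, \dots, m\} \times \bZ\big\}
\]
has the same law as $\{W_h(k, n)\}$ when $h = 1/m$. Equivalently, we can realise the environment $\omega$ as a measurable functional of the space-time noise $W$ that drives the limiting equation \eqref{pam}. Under this realisation, the definition \eqref{def_hat-z} of $\cZ_m$ becomes exactly the random variable $u_{1/m}(m, 0)$ appearing in \eqref{fk_h_X}, as the calculation already displayed in the excerpt shows. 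This provides a coupling of $\cZ_m$ and $u(1, 0)$ on the same probability space.

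Next, I would invoke Theorem \ref{thm_rate-frac} with parameters $h = 1/m$, $t = 1$, $x = 0$, so that $\lfloor t/h \rfloor = m$ and $\lfloor x/\sqrt{4h} \rfloor = 0$. The theorem yields, for any $\epsilon' \in (0, (2H + H_* - 1)\wedge 1)$,
\[
 \big\|u_{1/m}(m, 0) - u(1, 0)\big\|_{L^p(\Omega)}^2 \lesssim m^{-[(2H + H_* - 1)\wedge 1] + \epsilon'},
\]
with implicit constant depending on $p$ and $\epsilon'$. Taking square roots and setting $\epsilon = \epsilon'/2$ gives
\[
 \big\|u_{1/m}(m, 0) - u(1, 0)\big\|_{L^p(\Omega)} \lesssim m^{-\frac{1}{2}[(2H + H_* - 1)\wedge 1] + \epsilon}.
\]
Combining this with the coupling from the previous step and the definition of $W_p$ then yields the claimed bound.

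There is no substantive obstacle here beyond making the coupling rigorous: the only step that requires care is verifying that the Gaussian identification $\omega_h^{(m)} \stackrel{(d)}{=} W_h$ really turns $\cZ_m$ into the random variable $u_{1/m}(m, 0)$ pointwise (not merely in distribution), and this is a direct consequence of reading off both sides through the Wick exponential formula \eqref{def_wick}. Once that identification is in place, the theorem is an immediate corollary of Theorem \ref{thm_rate-frac}. An analogous statement for the delta initial condition case would follow in the same way from Theorem \ref{thm_rate-delta}, should one wish to record it.
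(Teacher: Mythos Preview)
Your proposal is correct and follows exactly the route the paper takes: the distributional identity $\cZ_m \stackrel{(d)}{=} u_{1/m}(m,0)$ (derived in the text just before the theorem) furnishes a coupling, and then Theorem \ref{thm_rate-frac} with $h=1/m$, $t=1$, $x=0$ gives the $L^p$ bound, which dominates $W_p$ by definition. The paper states the result as an immediate consequence of Theorem \ref{thm_rate-frac} without spelling out the coupling step, so your write-up is, if anything, slightly more detailed than the original.
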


Replacing the random walk with a random walk bridge, we can similarly get the next theorem as a result of Theorem \ref{thm_rate-delta}, subject to \eqref{pam} with a delta initial condition. 

\begin{theorem}\label{thm_dprm-delta}
Let $u$ denote the solution to \eqref{pam} with a delta initial condition at the origin, and let $\omega$ be a Gaussian random field with correlation specified by \eqref{cor_omega}. For any $m \in \bZ_{> 0}$, let $\bS^{(m, \tau (m))}$ be a simple random walk bridge, independent of $\omega$, with $\bS^{(m, 0)}_0 = \bS_m^{(m, \tau (m))} = \tau (m)$, where $\tau$ is defined in \eqref{def_tau-t}; and let the (Wick renormalised) partition function
\begin{align*}
 \cZ_m \coloneqq  \frac{\sqrt{m}}{2} G \big(m, \tau(m)\big)  \bE^{\bS} \bigg[& \exp \bigg( - 2^{4H_* - 3} m^{1 - 2H - H_*} {\rm Var}^{\omega} \Big( \sum_{1\leq j\leq m} \omega \big( j, \bS^{(m, \tau (m))}_j\big)\Big) \bigg) \nonumber \\
 &  \times\exp \Big(2^{2H_* - 1} m^{-\frac{1}{2}(2H + H_* - 1)} \sum_{1\leq j\leq m} \omega \big(j, \bS^{(m, \tau (m))}_j \big) \Big)\bigg]. 
 \end{align*}
Then, for all $p \geq 2$, $m \in \bZ_{> 0}$, and $\epsilon \in (0, \frac{1}{2}[(2 H + H_* - 1)\wedge 1])$, with a universal implicit constant, 
 \[
  W_p \big( \cZ_m, u(1, 0)\big) \lesssim m^{ - \frac{1}{2}[(2 H + H_* - 1)\wedge 1] + \epsilon}. 
 \]
\end{theorem}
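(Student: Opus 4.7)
The plan is to mirror the derivation of Theorem \ref{thm_dprm-flat} from Theorem \ref{thm_rate-frac}, and reduce Theorem \ref{thm_dprm-delta} to Theorem \ref{thm_rate-delta} via a distributional identity
\[
\cZ_m \stackrel{(d)}{=} u_{1/m}(m, 0),
\]
where $u_h$ is the discrete Feynman--Kac approximation of \eqref{fk_h_X-delta}, together with the elementary bound $W_p(X, Y) \leq \|X' - Y\|_{L^p(\Omega)}$ for any coupling $(X', Y)$ with $X' \stackrel{(d)}{=} X$.

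First, I would establish the distributional identity by an explicit change of variables. Using the coupling
\[
\omega_h^{(m)}(k, n) := 2^{2H_* - 1} h^{\frac{1}{2}(2H + H_* - 1)} \omega\big(m + 1 - k,\, 2n + \tau(m + 1 - k)\big),
\]
which satisfies $\{\omega_h^{(m)}(k, n)\} \stackrel{(d)}{=} \{W_h(k, n)\}$ by the computation preceding Theorem \ref{thm_dprm-flat}, one takes $h = 1/m$ and substitutes into \eqref{fk_h_X-delta} with $n = 0$. The prefactor $\frac{1}{2\sqrt{h}} G(m, -\tau(m)) = \frac{\sqrt{m}}{2} G(m, \tau(m))$ already matches $\cZ_m$ (using symmetry of $G(m, \cdot)$). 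For the exponential, after the change of summation index $j = m + 1 - i$, the noise term becomes
\[
\sum_{j = 1}^{m} \omega_{1/m}^{(m)}\big(m + 1 - j,\, \lfloor \bS_j^{(m, \tau(m))}/2 \rfloor\big) = 2^{2H_* - 1} m^{-\frac{1}{2}(2H + H_* - 1)} \sum_{j = 1}^m \omega\big(j,\, 2 \lfloor \bS_j^{(m, \tau(m))}/2 \rfloor + \tau(j)\big).
\]
The key auxiliary observation is that $\bS_0^{(m, \tau(m))} = 0$ and each step is $\pm 1$, so $\bS_j^{(m, \tau(m))}$ has parity $\tau(j)$; therefore $2 \lfloor \bS_j^{(m, \tau(m))}/2 \rfloor + \tau(j) = \bS_j^{(m, \tau(m))}$ identically. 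The variance term in $\wick$ then produces, upon the same rescaling, exactly the factor $2^{4H_* - 3} m^{1 - 2H - H_*} \mathrm{Var}^\omega(\sum_j \omega(j, \bS_j^{(m, \tau(m))}))$ appearing in the definition of $\cZ_m$. Putting these pieces together recovers $\cZ_m$ up to distribution.

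Second, I would apply Theorem \ref{thm_rate-delta} with $h = 1/m$, $t = 1$, $x = 0$, and $\epsilon$ replaced by $2\epsilon \in (0, (2H + H_* - 1) \wedge 1)$ to obtain
\[
\big\| u_{1/m}(m, 0) - u(1, 0) \big\|_{L^p(\Omega)}^2 \lesssim m^{-[(2H + H_* - 1) \wedge 1] + 2\epsilon}.
\]
Since $\cZ_m$ and $u_{1/m}(m, 0)$ have the same law, they can be realised on a common probability space — constructed by using the Gaussian noise $W$ to build $u(1, 0)$, and then identifying $\omega$ with the corresponding discretisation $W_{1/m}$ via the coupling above, together with an independent random walk bridge. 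Under this coupling, $\cZ_m$ and $u_{1/m}(m, 0)$ coincide almost surely, and the definition of $W_p$ yields
\[
W_p\big(\cZ_m, u(1, 0)\big) \leq \big\| u_{1/m}(m, 0) - u(1, 0) \big\|_{L^p(\Omega)} \lesssim m^{-\frac{1}{2}[(2H + H_* - 1) \wedge 1] + \epsilon}.
\]

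The main obstacle is the bookkeeping in the first step: one must simultaneously track the reversal $j = m + 1 - i$, the parity identity $2 \lfloor \bS_j/2 \rfloor + \tau(j) = \bS_j$, the interplay between the prefactor $G(m, \tau(m))$ and the bridge conditioning inherent in \eqref{fk_h_X-delta} (which comes from dividing by the unconditioned walk probability), and the exact matching of the Wick variance. Any mismatch in powers of $m$ or in the parity correction would invalidate the distributional identity and hence the entire reduction; once this identity is secured, the remainder is a direct application of Theorem \ref{thm_rate-delta} and the standard $L^p$ upper bound for Wasserstein distance.
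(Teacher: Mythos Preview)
Your proposal is correct and follows exactly the approach the paper indicates: the paper does not give an explicit proof of Theorem~\ref{thm_dprm-delta} but simply states that it follows from Theorem~\ref{thm_rate-delta} in the same way Theorem~\ref{thm_dprm-flat} follows from Theorem~\ref{thm_rate-frac}, and you have carefully filled in those details (the coupling $\omega_h^{(m)} \stackrel{(d)}{=} W_h$, the parity identity $2\lfloor \bS_j/2\rfloor + \tau(j) = \bS_j$, the matching of prefactor and Wick variance, and the Wasserstein bound via the $L^p$ coupling). Your bookkeeping is accurate, including the index reversal and the use of $G(m,-\tau(m))=G(m,\tau(m))$ by symmetry.
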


 In both Theorems \ref{thm_dprm-flat} and \ref{thm_dprm-delta}, we consider only the case when $t = 1$ and $x = 0$. They can likely be extended to general $(t, x) \in \bR_{> 0} \times \bR$ with minor adjustments. However, since this is not the focus of the current paper, we choose not to pursue this extension further.





\end{document}